\numberwithin{equation}{section}
\newcommand{\beq}{\begin{equation}}
\newcommand{\eeq}{\end{equation}}
\newcommand{\beqs}{\begin{eqnarray*}}
\newcommand{\eeqs}{\end{eqnarray*}}
\newcommand{\beqn}{\begin{eqnarray}}
\newcommand{\eeqn}{\end{eqnarray}}
\newcommand{\beqa}{\begin{array}}
\newcommand{\eeqa}{\end{array}}
\def\lra{\longrightarrow}
\def\bc{\begin{center}}
\def\ec{\end{center}}
\def\begeq{\begin{equation}}
\def\endeq{\end{equation}}
\def\and{\quad{\rm and}\quad}
\let\lra=\longrightarrow
\def\mapright\#1{\,\smash{\mathop{\lra}\limits^{\#1}}\,}
\newtheorem{prop}{Proposition}[section]
\newtheorem{theo}[prop]{Theorem}
\newtheorem{lem}[prop]{Lemma}
\newtheorem{cor}[prop]{Corollary}
\newtheorem{rem}[prop]{Remark}
\newtheorem{ex}[prop]{Example}
\begin{document}
\title{Horosymmetric  limits of K\"ahler-Ricci flow on Fano $\mathbf G$-manifolds }
\author{Gang $\text{Tian}^{\dag}$ and Xiaohua $\text{Zhu}^{\ddag}$}

\address{BICMR and SMS, Peking
University, Beijing 100871, China.}
\email{ tian@math.princeton.edu\\\ xhzhu@math.pku.edu.cn}

\thanks { $\dag$ $\ddag$ partially supported by National Key R\&D Program of China  2020YFA0712800.}
\subjclass[2000]{Primary: 53C25; Secondary:
32Q20, 58D25, 14L10}

\keywords{$G$-manifolds, K\"ahler-Ricci soliton, K\"ahler-Ricci flow, horosymmetric space}

\begin{abstract} In this paper, we prove that on a Fano $\mathbf G$-manifold  $(M,J)$, the Gromov-Hausdorff limit of K\"ahler-Ricci flow
 with initial metric in $2\pi c_1(M)$ must be a $\mathbb Q$-Fano horosymmetric  variety $M_\infty$,  which   admits    a singular  K\"ahler-Ricci soliton.   Moreover,  $M_\infty$  is  a limit   of $\mathbb C^*$-degeneration of $M$  induced by an element in the Lie algebra of Cartan torus of $\mathbf G$.   A similar result can be also proved for K\"ahler-Ricci flows on any Fano horosymmetric manifolds. As an application, we generalize our previous result about the type II singularity of K\"ahler-Ricci flows  on  Fano  $\mathbf G$-manifolds to Fano horosymmetric manifolds.

\end{abstract}

\maketitle

\section{Introduction}

It has been known that the existence of K\"ahler-Einstein, abbreviated by KE, metrics on a Fano manifold $M$, that is a compact K\"ahler manifold with positive first Chern class, is equivalent to the K-stability (cf. \cite{Ti1, Ti2, CDS, BBJ, LTW, Li}, etc.). Since not every Fano manifold is K-stable, there are Fano manifolds which do not admit KE metrics and we are led to studying the problem on optimal deformations of such Fano manifolds which admit a canonical metric.
The K\"ahler-Ricci (KR) flow  provides an approach to solve this problem by geometric analytic method, more precisely, we have the following conjecture, referred as Hamilton-Tian (HT) conjecture (cf. \cite{Ti1, Pe}):

 {\it Any sequence of metrics $(M, \omega(t))$ in KR flow contains a subsequence converging to a length space $(M_\infty,\omega_\infty)$
 in the Gromov-Hausdorff  (GH) topology and $(M_\infty,\omega_\infty)$ is a smooth KR soliton outside a closed subset $S$, called the singular set, of codimension at least $4$.  Moreover, this subsequence of $(M, \omega(t))$ converges to $(M_\infty,\omega_\infty)$
 in the Cheeger-Gromov topology.}

 This conjecture has been solved (cf. \cite{Ti1, TZhang,  CW, Bam, WfZ1}). Actually, the existence of  GH limits follows from Perelman's noncollapsing result  \cite{Pe, ST} and Q. Zhang's non-expanding result \cite{Zhq}. Moreover, the uniqueness of $(M_\infty,\omega_\infty)$ is also true (see \cite{CSW, WfZ2, HL1}).

Recall that a KR soliton on a complex manifold $M$ is a pair $(X, \omega)$, where $X$ is a holomorphic vector field (HVF) on $M$ and $\omega$ is a K\"ahler metric on $M$, satisfying:
\begin{align}\label{kr-soliton}{\rm Ric}(\omega)\,-\,\omega\,=\,L_X(\omega),
\end{align}
where $L_X$ denotes the Lie derivative along $X$. If $X=0$, the KR soliton becomes a
KE metric. The uniqueness theorem in \cite{TZ1, TZ2} states that a KR
soliton on a compact complex manifold, if it exists, must be unique modulo ${\rm Aut}(M)$.\footnote {In the case of KE metrics, this uniqueness theorem is due to Bando-Mabuchi \cite{BM}.} Furthermore, $X$ lies in the center of Lie algebra of the reductive part of ${\rm Aut}(M)$.

On a Fano manifold $M$, we usually consider the following normalized KR flow,
\begin{align}\label{KRF}
\frac{\partial \omega(t)}{\partial t}\, =\, -{\rm Ric}(\omega( t))\, +\,\omega(t), ~\omega( 0)=\omega_0,
\end{align}
where $\omega_0$ and $\omega (t)$ denote the K\"ahler forms of a given K\"ahler metric $g_0$ and the solutions of Ricci flow, respectively.
It is proved in \cite{Cao} that (\ref{KRF}) has a global solution $\omega(t)$ for all $t\ge 0$ whenever $\omega_0$ represents $2\pi c_1(M)$.
It is a natural problem to study the limiting behavior of $\omega(t)$ as $t\to \infty$ as well as its limiting structure.

The purpose of this paper is to solve the above problem in case of Fano $\mathbf G$-manifolds, where $\mathbf G$ is a complex reductive Lie group (i.e., a complexification of compact group $\mathbf K$). By a $\mathbf G$-manifold, we mean a {\it (bi-equivariant) compactification of $\mathbf G$} which admits a holomorphic $\mathbf G\times \mathbf G$-action and has an open and dense orbit isomorphic to $\mathbf G$ as a $\mathbf G\times \mathbf G$-homogeneous space. A special case of $\mathbf G$-manifolds is the case of toric manifolds when $\mathbf G$ is a torus.

The existence problem of KE metrics and KR solitons on $\mathbf G$-manifolds has been extensively studied (cf. \cite{Del1, Del2, LZZ, LTZ2, DH}, etc.).
A criterion has been found for the existence in terms of the barycenter of moment polytope associated to the Cartan torus subgroup of $\mathbf G$. By using this criterion, one can construct many examples of $\mathbf G$-manifolds which admit KE metrics or KR solitons, as well as examples of $\mathbf G$-manifolds which do not admit neither  KE metrics nor KR solitons (cf. \cite{Del1, Zhu2}). Furthermore, we have recently proved the following result for the KR flow on $\mathbf G$-manifolds \cite{LTZ1, Zhu3}.

\begin{theo}\label{g-flow} Let $(M,J)$ be a Fano $\mathbf G$-manifold which admits no KR-soliton. Then any solution $\omega(t)$ of
(\ref{KRF}) with initial metric $\omega_0\in 2\pi c_1(M,J)$ will develop singularity of type II, that is, curvature of $\omega(t)$ must blow up as $t\to \infty$.

\end{theo}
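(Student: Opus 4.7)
I will argue by contradiction: suppose that along the flow $\omega(t)$ one has a uniform curvature bound $\sup_{M\times[0,\infty)}|\mathrm{Rm}(\omega(t))|<\infty$, and derive the existence of a K\"ahler-Ricci soliton on $(M,J)$, contradicting the hypothesis.

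Given the curvature bound, I combine it with Perelman's non-collapsing estimate together with his uniform scalar curvature and diameter bounds along the normalized K\"ahler-Ricci flow on a Fano manifold. This places the flow in the hypotheses of Hamilton's compactness theorem, so along any sequence $t_k\to\infty$ a subsequence of $(M,\omega(t_k),J)$ converges in the Cheeger-Gromov-Hamilton sense to a smooth compact K\"ahler manifold $(M_\infty,\omega_\infty,J_\infty)$. By the Hamilton-Tian conjecture, now fully established, $\omega_\infty$ must be a K\"ahler-Ricci soliton.

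Next I invoke the identification result announced in the abstract: the Gromov-Hausdorff limit of $\omega(t)$ is a $\mathbb Q$-Fano horosymmetric variety realized as a $\mathbb C^*$-degeneration of $(M,J)$ induced by an element $\xi$ in the Lie algebra of the Cartan torus of $\mathbf G$. Under the bounded-curvature hypothesis the GH limit coincides with the smooth Cheeger-Gromov limit extracted above, so the $\mathbb C^*$-degeneration is a smooth flat family, hence trivial as a deformation of the Fano manifold $(M,J)$. Therefore $M_\infty$ is biholomorphic to $(M,J)$, and $\omega_\infty$ pulls back to a smooth K\"ahler-Ricci soliton on $(M,J)$, the required contradiction.

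The main obstacle I expect is the step asserting that a smooth horosymmetric $\mathbb C^*$-degeneration of a Fano $\mathbf G$-manifold must be trivial, i.e.\ that $\xi=0$. In the $\mathbf G$-setting this reduces to a combinatorial statement about the moment polytope: a nonzero Cartan element $\xi$ moves the polytope of $M$ in a nontrivial face-direction, and a nonzero limiting direction is incompatible with smoothness of the central fiber. I would close this step by combining the explicit description of $\mathbf G$-compactifications by polytopes and fans with the uniqueness theorem of Tian-Zhu for K\"ahler-Ricci solitons, which forces $\omega_\infty$ to coincide modulo $\mathrm{Aut}(M_\infty)$ with any putative soliton pulled back to $(M,J)$ under the smooth convergence; once the triviality of the degeneration is secured, the standing non-existence hypothesis on $M$ delivers the contradiction.
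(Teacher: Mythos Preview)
Your strategy has a genuine gap at the step you yourself flag as ``the main obstacle.'' From bounded curvature you correctly extract a smooth Cheeger--Gromov limit $(M_\infty,\omega_\infty,J_\infty)$ which is a KR soliton, and you correctly identify $M_\infty$ with the central fiber of a $\mathbb C^*$-degeneration of $(M,J)$ induced by some $\xi$ in the Cartan torus. But the inference ``the central fiber is smooth, hence the family is trivial, hence $M_\infty\cong (M,J)$'' is unjustified. A smooth flat family over $\mathbb C$ need not be a product; smoothness of the central fiber does not force $\xi=0$, and nothing you wrote rules out a genuine jump of complex structure to a smooth horosymmetric manifold that is \emph{not} biholomorphic to $(M,J)$. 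Your sketch about polytopes and fans, and the appeal to Tian--Zhu uniqueness, does not close this: uniqueness of solitons on a fixed complex manifold says nothing about two different complex structures on the same underlying smooth manifold. So as written you have not produced a KR soliton on $(M,J)$, and the contradiction does not follow.

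The paper's argument (see Remark~\ref{theorem2-remark}(4)) avoids this issue entirely by working in the opposite direction. It analyzes the reduced flow on $\mathfrak a_+$ via Theorem~\ref{main-theorem-2}: if $(M,J)$ admits no KR soliton, one is forced into Case~3, where the concentration points $x_t$ satisfy $\langle\beta,x_t\rangle\to\infty$ for some $\beta\in\Phi_+$ (equivalently $\Phi_u\neq\emptyset$). Then a direct local computation of the curvature of $\omega_{\psi_t}$ in the $\beta$-root direction (this is \cite[Lemma~4.4]{LTZ1}, also \cite[Lemma~6.4]{Zhu3}) shows blow-up. No structure theorem for the limit, and in particular no triviality-of-degeneration claim, is needed. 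If you want to salvage your approach, the honest replacement for your step~5 is precisely this lemma: show that $\Phi_u\neq\emptyset$ already forces $|\mathrm{Rm}(\omega(t))|\to\infty$, which is a contrapositive of what you need and bypasses the degeneration-triviality question altogether.
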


By   Theorem  \ref{g-flow},  we found   that there are two $\mathrm{SO}_4(\mathbb{C})$-manifolds and one $\mathrm{Sp}_4(\mathbb{C})$-manifold on which the KR flow  develops singularities of type II \cite{LTZ1}.
The result  provides  the first example of Fano manifolds on which the KR flow develops singularity of type II.

 Theorem  \ref{g-flow}  implies that the GH limit $(M_\infty,\omega_\infty)$ from the HT conjecture must be singular \cite{Bam, WfZ2}.  Also we have shown that the limit may not be a Fano $\mathbf G$-variety  \cite[Section 7]{LTZ2}.  Nevertheless, we hope that $(M_\infty,\omega_\infty)$ will still keep some symmetries and can be classified. Our first main theorem of this paper is the following:

\begin{theo}\label{main-theorem}
 The GH  limit of KR flow    (\ref{KRF})  on  a Fano $\mathbf G$-manifold  $(M,J)$
 is a  $\mathbb Q$-Fano horosymmetric  variety,   which  admits a singular KR soliton.  More precisely, if an initial metric $\omega_0$ in (\ref{KRF}) is $\mathbf K\times \mathbf K$-invariant, the solution  $\omega(t)$ after  Cartan torus  transformations of  $\mathbf G$  converges  locally smoothly to a KR soliton  $\omega_{KS}$  on a horosymmetric space in Cheeger-Gromov topology,  whose completion is the  GH  limit of KR flow  (\ref{KRF}) with  a structure of  $\mathbb Q$-Fano horosymmetric variety  $M_\infty$ as a limit of  $\mathbb C^*$-degeneration of $(M,J)$  induced by an element in the Lie algebra of Cartan torus.  Moreover,
   $M_\infty$ is same with a   limit of $\mathbb C^*$-degeneration of $(M,J)$ induced by the soliton HVF of  $\omega_{KS}$,   if  $\omega_{KS}$ is not a KE metric.
\end{theo}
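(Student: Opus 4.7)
The plan is first to invoke the full resolution of the Hamilton--Tian conjecture (cf.\ \cite{Bam, WfZ1, CSW, WfZ2, HL1}) to obtain a unique GH limit $(M_\infty,\omega_\infty)$, which is a $\mathbb{Q}$-Fano variety admitting a singular KR soliton. What needs to be added is (a) the refined structure that $M_\infty$ is in fact \emph{horosymmetric}, (b) its realization as a $\mathbb{C}^*$-degeneration of $(M,J)$ induced by an element of the Lie algebra of the Cartan torus, and (c) the matching of this element with the soliton HVF when $\omega_{KS}$ is not a KE metric. By uniqueness of the limit, I may assume throughout that $\omega_0$ is $\mathbf{K}\times\mathbf{K}$-invariant.

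Since $\mathbf{K}\times\mathbf{K}$-invariance is preserved under (\ref{KRF}), the KAK decomposition reduces the flow to a real parabolic Monge--Amp\`ere equation for a Weyl-invariant convex potential $\varphi_t$ on the Lie algebra of the Cartan torus, in the spirit of Delcroix's treatment of the elliptic problem \cite{Del1, Del2}. Using the uniform bounds on the modified Mabuchi and Perelman functionals established in \cite{LTZ1, Zhu3, LTZ2}, I would extract a drift vector $\xi_0$ in the positive Weyl chamber such that, setting $\sigma_t := \exp(t\xi_0)\subset\mathbf{G}$, the recentered potentials associated to $\sigma_t^{*}\omega(t)$ are uniformly bounded on compact subsets of the Cartan Lie algebra. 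The vector $\xi_0$ should be characterized as the minimizer of a modified volume (or equivalently modified Futaki) functional restricted to the positive Weyl chamber.

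The heart of the proof is to identify the Cheeger--Gromov limit of $\sigma_t^{*}\omega(t)$ as a KR soliton on a horosymmetric space. The one-parameter subgroup $\exp(t\xi_0)$ of the Cartan torus induces a test configuration of $(M,-K_M)$ whose central fiber can be analyzed via Luna--Vust theory for spherical embeddings: for a $\mathbf{G}$-compactification the generic isotropy is the diagonal $\mathbf{G}$, and degenerating by a Cartan direction contracts it to the intersection of a parabolic with a symmetric subgroup, which is exactly the combinatorial data of a horosymmetric embedding. Combining this with the real Monge--Amp\`ere convergence of the recentered potentials, I would show that $\sigma_t^{*}\omega(t)$ converges locally smoothly on the open orbit of the central fiber to a smooth solution $\omega_{KS}$ of the KR soliton equation with drift $\xi_0$; uniqueness of the GH limit with singular soliton structure \cite{CSW, WfZ2} then identifies the metric completion with $(M_\infty,\omega_\infty)$.

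For the last statement, the Tian--Zhu uniqueness theorem \cite{TZ1, TZ2} places the soliton HVF $X$ of $\omega_{KS}$ in the center of the reductive part of $\mathrm{Aut}(M_\infty)$; residual $\mathbf{K}\times\mathbf{K}$-invariance forces $X$ into the Cartan Lie algebra. The minimizing characterization of the optimal degeneration (cf.\ \cite{HL1, WfZ2}) then forces $X$ to coincide with $\xi_0$ up to normalization, so that whenever $\omega_{KS}$ is not a KE metric (i.e.\ $X\neq 0$) the $\mathbb{C}^*$-degeneration it induces agrees with the Cartan degeneration constructed above. The main obstacle I foresee is in the third paragraph: proving locally smooth convergence of $\sigma_t^{*}\omega(t)$ on the open orbit of a horosymmetric variety which is in general \emph{different} from the original $(M,J)$. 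This requires combining the real Monge--Amp\`ere analysis on the Cartan Lie algebra with Bamler's partial regularity theory along the KR flow in order to rule out any further collapse once the Cartan drift by $\sigma_t$ has been absorbed.
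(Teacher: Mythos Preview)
Your outline has the right large-scale shape but contains a concrete gap in the recentering step, and it bypasses the mechanism that actually produces the horosymmetric structure in the paper.

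The gap is your linear drift $\sigma_t=\exp(t\xi_0)$ with $\xi_0$ taken to be an $H$-invariant minimizer. The paper does \emph{not} recenter linearly and does \emph{not} start from the optimal-degeneration vector. It tracks the concentration points $x_t\in\mathfrak a_+$, defined as the minimizers of $w_t=\psi_t+j$ with $j=-\log J$, and proves only the increment bound $|x_{t+\epsilon_0}-x_t|\le C$ by a maximum-principle argument on the reduced flow (Lemma~\ref{x-vector}, Step~1). The soliton vector $Y$ emerges afterwards as the limit of $dy_t/dt$; a priori $x_t-tY$ need not stay bounded, so recentering by $\exp(t\xi_0)$ is unjustified. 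The $H$-invariant characterization you invoke is recorded in the paper only as a \emph{consequence} (the Remark after Corollary~\ref{optimal-degeneration}), not as the method of proof. Equally important, after translation by $x_t$ the potential $\hat\psi_t$ is \emph{no longer Weyl-invariant}, hence does not define a global K\"ahler potential on a $\mathbf G$-orbit; the introduction flags this as the main new difficulty over the toric case. The paper works only on the open cone $\mathfrak a_+$ and shows that the sublevel sets $U_k-x_t$ exhaust a strictly larger cone $\mathfrak a_+'=\{\langle\alpha,\cdot\rangle>0:\alpha\in\Phi_+\setminus\Phi_u\}$, where $\Phi_u$ is the set of positive roots with $\langle\beta,x_t\rangle\to\infty$ (Proposition~\ref{limit-cone}). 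It is this dichotomy on $\Phi_+$, read off directly from $x_t$, that determines the horosymmetric structure via the explicit construction of $\mathbf H\subset\mathbf G\times\mathbf G$ in Example~\ref{ex-degereration}---not an abstract Luna--Vust argument.

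Two further technical steps you do not isolate are essential. The local $C^2$-bound on $U_k$ (Propositions~\ref{upper-bound-Hessian}--\ref{local-uniform}) comes from controlling $|X|_{\omega_{\psi_t}}$ for Cartan torus vector fields $X$ via Perelman's estimates (Lemma~\ref{gradient-vector}); no Bamler-type partial regularity is needed here. Most delicate is Lemma~\ref{x-vector}, Step~2: one must show that the translated Ricci potential $\hat h_t$ converges to a \emph{linear} function $\langle Y,\nabla\psi_\infty\rangle+c$, which is what forces the limit equation~(\ref{limit-equation}) to be a KR soliton equation at all. The argument is a contradiction using bounded oscillation of $\hat\psi'$ on $U_k$. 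Finally, the identification $M_\infty\cong\overline{\mathbf N}$ is obtained by first producing a singular KR soliton on $\overline{\mathbf N}$ via the barycenter criterion (Appendix, Corollary~\ref{extension-KR}) and then applying modified $K$-stability together with the two-step degeneration of \cite{CSW} (or \cite{LXW} in the KE case); your route through Tian--Zhu uniqueness plus $H$-minimization is different and would still need $\mathbf G\times\mathbf G$-equivariance of the optimal degeneration to place the soliton HVF in the Cartan torus.
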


A homogeneous space $\mathbf G/\mathbf H$ is called horosymmetric if it is a fibration over a generalized flag  manifold whose fibers are symmetry spaces (cf. \cite {Del3}). A horosymmetric variety is simply a compactification  of a horosymmetric space $\mathbf G/\mathbf H$, if it is smooth, we call it a horosymmetric manifold.
Theorem \ref{main-theorem} means that any Fano $\mathbf G$-manifold has a degeneration to a $\mathbb Q$-Fano horosymmetric variety on which there is a singular KR soliton. Furthermore, this degeneration can be realized by a $\mathbb C^*$-degeneration  induced by an element in the Lie algebra of Cartan subgroup of $\mathbf G$
(cf. Theorem \ref{main-theorem-2}).   Namely, we have

\begin{cor}\label{optimal-degeneration} Any Fano $\mathbf G$-manifold $(M,J)$ admits a $\mathbf G\times \mathbf G$-equivariant $\mathbb C^*$-degeneration induced by an element in the Lie algebra of Cartan torus of    $\mathbf G$ such that
its central fiber in the $\mathbb C^*$-degeneration is modified $K$-stable relatively to the group $\mathbf G\times \mathbf G$.
\end{cor}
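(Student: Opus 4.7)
The plan is to read the corollary off Theorem~\ref{main-theorem} (together with the referenced Theorem~\ref{main-theorem-2}) and then upgrade the existence of a singular KR soliton on the central fibre to equivariant modified $K$-stability.

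First I would fix a $\mathbf K\times \mathbf K$-invariant initial metric $\omega_0\in 2\pi c_1(M,J)$; such metrics exist by $\mathbf K\times \mathbf K$-averaging, so this is not a restriction. Running the KR flow~(\ref{KRF}) and applying Theorem~\ref{main-theorem} produces a $\mathbb Q$-Fano horosymmetric variety $M_\infty$ carrying a singular KR soliton $\omega_{KS}$, and exhibits $M_\infty$ as the central fibre of a $\mathbb C^*$-degeneration $\pi\colon\mathcal M\to\mathbb C$ of $(M,J)$ whose generator $\Lambda$ lies in the Lie algebra of the Cartan torus of $\mathbf G$. Since $\Lambda$ commutes with the whole Cartan and, more importantly, the approximating metrics remain $\mathbf K\times \mathbf K$-invariant along the flow, this $\mathbb C^*$-action commutes with the $\mathbf G\times \mathbf G$-action on the total space; hence the degeneration is $\mathbf G\times \mathbf G$-equivariant and $M_\infty$ inherits a holomorphic $\mathbf G\times \mathbf G$-action whose Cartan contains the soliton HVF $X_{KS}$ (by the ``moreover'' clause of Theorem~\ref{main-theorem} in the non-KE case, and trivially in the KE case with $X_{KS}=0$).

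It remains to promote ``$M_\infty$ admits a singular KR soliton with soliton HVF $X_{KS}$ in the Cartan of $\mathbf G\times \mathbf G$'' into ``$M_\infty$ is modified $K$-stable relative to $\mathbf G\times \mathbf G$ with modifier $X_{KS}$''. On a horosymmetric $\mathbb Q$-Fano variety this step is essentially combinatorial: by the horosymmetric analogue of Delcroix's convex-analytic framework, every $\mathbf G\times \mathbf G$-equivariant special test configuration of $M_\infty$ is encoded by a rational concave piecewise-linear function on its moment polytope, and the modified Donaldson-Futaki invariant is then computed as the integral of this function against the Duistermaat--Heckman measure twisted by $e^{\langle X_{KS},\cdot\rangle}$. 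The existence of $\omega_{KS}$ supplies the required non-negativity, while the uniqueness of KR solitons modulo the reductive part of $\mathrm{Aut}(M_\infty)$ rules out the equality case except for product configurations, giving strict modified $K$-stability.

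The main obstacle is exactly this soliton-implies-stability step on a singular central fibre: for smooth Fano varieties it is the classical Tian--Zhu / Berman--Witt Nystr\"om theorem, whereas here $M_\infty$ has klt horosymmetric singularities and $\omega_{KS}$ exists only in the pluripotential sense. I expect that the very explicit polytope description of horosymmetric compactifications reduces the singular Berman--Boucksom--Jonsson machinery to a finite-dimensional convex inequality that can be checked by hand using the Duistermaat--Heckman measure and the soliton equation, and this is where the substantive work lies; modulo it, the corollary is an immediate consequence of Theorem~\ref{main-theorem}.
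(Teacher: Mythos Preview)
Your overall plan---derive the degeneration and the singular KR soliton on the central fibre from Theorem~\ref{main-theorem}, then upgrade ``admits a singular KR soliton'' to ``modified $K$-stable relative to $\mathbf G\times\mathbf G$''---is exactly the paper's structure. The first half of your argument matches the paper's.

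The difference is entirely in the second step. You propose to carry out the soliton-implies-stability implication by hand, using the polytope description of $\mathbf G\times\mathbf G$-equivariant test configurations on a horosymmetric variety and checking the sign of the modified Donaldson--Futaki invariant via a convex integral. The paper does not do any of this: it simply invokes the general theorem that a $\mathbb Q$-Fano variety admitting a singular KR soliton is modified $K$-stable, citing \cite{BN, WZZ, HL2}. In particular, the obstacle you flag---that the classical smooth argument does not immediately apply to the klt central fibre---is precisely what Han--Li \cite{HL2} establishes in the singular setting, so no horosymmetric-specific combinatorics is needed. Your route would presumably work and has the advantage of being self-contained within the polytope framework, but it is considerably more labour than the one-line citation the paper uses; conversely, the paper's approach is shorter but imports a substantial black box.
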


The modified K-stability in Corollary \ref{optimal-degeneration} follows from  the fact that  $(M_\infty, J_\infty)$  in Theorem \ref{main-theorem} admits  a singular KR soliton (cf. \cite{ BN, WZZ, HL2}).

\begin{rem} The $\mathbf G\times \mathbf G$ equivariant $\mathbb C^*$-degeneration in Corollary \ref{optimal-degeneration} is referred as an optimal degeneration for $(M,J)$.  Since the soliton HVF on $M_\infty$   in  Theorem  \ref{main-theorem}  induces a $\mathbf G\times \mathbf G$ equivariant $\mathbb C^*$-degeneration on $M$ which attains  the  minimum  of  $H$-invariant for special degenerations \cite{DS, WfZ2},  we can  find  a unique  degeneration  by minimizing  $H$-invariant for  the $\mathbf G\times \mathbf G$ equivariant  $\mathbb C^*$-degenerations   induced by the Lie algebra of Cartan torus  defined by Delcroix \cite{Del2}. Thus the algebraic  variety structure of   $M_\infty$    in Theorem \ref{main-theorem} can be classified by this way if $M$ is not K-semistable.   A detailed computation of $H$-invariant  on such degenerations has been recently given by Li-Li  \cite{LL}.

\end{rem}

Since the limit of  KR flow  (\ref{KRF}) is unique, we need to prove Theorem \ref{main-theorem} only for a $\mathbf K\times \mathbf K$-invariant initial metric. Then the flow can be reduced to a parabolic equation of  Monge-Amp\`ere (MA) type  for a class of convex functions as in the  case of toric manifolds \cite{Zhu1}.  The main difference here  is that  the convex functions   should be  invariant under the Weyl  subgroup of $\mathbf G$, but  the  induced  convex functions  by  Cartan torus  transformations will not preserve   Weyl-invariant in general.   In particular,  we can not get a global K\"ahler potentials  on a  $\mathbf G$-orbit by a convex function  after  torus  transformations to the $\mathbf K\times \mathbf K$-invariant  metric.   Our method is to do  local estimates  for  those induced  convex functions on an  Euclidean  cone $\mathfrak a_+$ and prove that  the limit convex function on a new   cone $\mathfrak a_+'$ will  be extended  to  define a KR soliton on a horosymmetric space (cf. Section  3-5).

We will divide our proof of Theorem \ref{main-theorem} into three cases according to  concentration  points  $x_t$ of convex functions $w_t$  on the cone $\mathfrak a_+$ (see Theorem \ref{main-theorem-2}).  Those points   induce a family of torus translations  which  will determine the soliton HVF on the limit space
$(M_\infty, J_\infty)$ (cf. Proposition \ref{limit-solution} and Lemma \ref{x-vector}).  We would like to mention that similar arguments have been done to get local estimates for K\"ahler potentials in KR flow in general by using the partial $C^0$-estimate (see  \cite{WfZ1}).

The horosymmetric space in Theorem \ref{main-theorem} can be explicitly constructed via the  $\mathbb C^*$-degeneration   induced by an element in the Lie algebra of Cartan subgroup of $\mathbf G$. (cf.  Example \ref{ex-degereration}).  For general construction of  $\mathbb C^*$-degenerations on horosymmetric varieties, we refer the reader to recent papers by Deltroix, Li-Li \cite{Del2, LL}.

Theorem \ref{main-theorem} can be generalized to any  horosymmetric manifolds.
Namely, we can also prove

 \begin{theo}\label{main-theorem-horosymmetric}
 The GH  limit of KR flow (\ref{KRF})  on  a Fano horosymmetric manifold $M=\overline {{\mathbf G}/{\mathbf H}}$
 is a $\mathbb Q$-Fano horosymmetric variety,  which admits a singular KR soliton.  More precisely, if an initial metric $\omega_0$ in (\ref{KRF}) is $\mathbf K$-invariant, the solution  $\omega(t)$ after Cartan torus transformations of $M$ converges to a KR soliton on a $\mathbb Q$-Fano horosymmetric variety in the Cheeger-Gromov topology,   whose completion is the  GH  limit of KR flow  (\ref{KRF}) with  a structure of  $\mathbb Q$-Fano horosymmetric variety  $M_\infty$ as a limit of  $\mathbb C^*$-degeneration of $(M,J)$  induced by an element in the Lie algebra of Cartan torus.  Moreover,
   $M_\infty$ is same with a   limit of $\mathbb C^*$-degeneration of $(M,J)$ induced by the soliton HVF of  $\omega_{KS}$,   if  $\omega_{KS}$ is not a KE metric.
\end{theo}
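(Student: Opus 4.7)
The plan is to mirror the proof of Theorem \ref{main-theorem}, replacing the bi-equivariant $\mathbf G$-compactification reduction by Delcroix's horosymmetric one. By uniqueness of the GH limit it suffices to start from a $\mathbf K$-invariant initial metric $\omega_0$. On a Fano horosymmetric manifold $M=\overline{\mathbf G/\mathbf H}$ the generalized KAK-type decomposition identifies $\mathbf K$-invariant K\"ahler potentials of metrics in $2\pi c_1(M)$ with convex functions $w_t$ on the positive restricted Weyl chamber $\mathfrak a_+\subset \mathfrak a$ that are invariant under the little Weyl group $W_{\mathbf H}$ of $\mathbf G/\mathbf H$. The normalized flow \eqref{KRF} then reduces to a parabolic real Monge-Amp\`ere equation for $w_t$ whose right-hand side involves Delcroix's Jacobian weight $J(y)$ — built from the restricted root system with root multiplicities $m_\alpha$ — and a linear shift $\ell$ determined by the anticanonical class $-K_M$; the image $\nabla w_t(\mathfrak a_+)$ is a translate of the moment polytope $P$ of $M$.

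Next I would follow the strategy laid out for Theorem \ref{main-theorem}: track the concentration point $x_t\in P$ of $w_t$, choose $\xi_t$ in the Lie algebra $\mathfrak a$ of the Cartan torus so that the shifted potential $\widetilde w_t:=w_t-\langle\xi_t,y\rangle$ is normalized at a fixed base point, and establish local $C^0$ and $C^2$ estimates for $\widetilde w_t$ on compact subsets of $\mathfrak a_+$. This yields locally smooth subsequential convergence of $\widetilde w_t$ on a possibly smaller cone $\mathfrak a_+'$, whose dual is a new moment polytope $P_\infty\subset \mathfrak a^*$. As in the $\mathbf G$-manifold case, the three scenarios for the limit behavior of $x_t$ — asymptotically interior, on a wall of $P$, or escaping to infinity along $\mathfrak a_+$ — are handled separately. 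The required estimates are essentially convex-analytic, combined with the partial $C^0$-estimate of \cite{WfZ1}, and they transfer to the horosymmetric setting once the weight $J$ and the relevant Weyl-invariance are replaced by their horosymmetric analogues.

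The limit $w_\infty$ will satisfy a stationary soliton Monge-Amp\`ere equation on $\mathfrak a_+'$, and via Delcroix's correspondence it defines a singular KR soliton $\omega_{KS}$ on a horosymmetric space $\mathbf G/\mathbf H_\infty$; its metric completion is a $\mathbb Q$-Fano horosymmetric variety $M_\infty$, and the $\mathbf G$-equivariant $\mathbb C^*$-degeneration of $M$ to $M_\infty$ is precisely the one induced by $\xi_\infty:=\lim\xi_t\in \mathfrak a$. When $\omega_{KS}$ is not KE the soliton HVF is nonzero and central, and uniqueness of the optimal degeneration — the minimizer of the $H$-invariant among Cartan-torus-equivariant special degenerations, cf. \cite{DS,WfZ2,LL} — forces it to coincide with the one induced by the soliton HVF. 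The hard part will be structural rather than conceptual: Delcroix's Jacobian $J(y)$ carries nontrivial restricted root multiplicities and the invariance is under the little Weyl group $W_{\mathbf H}$ instead of the ambient Weyl group of $\mathbf G$, so both the elementary convex-geometric lemmas used in the $\mathbf G$-case and the precise form of the reduced MA equation must be reworked; one must also verify that $\mathbf K$-invariance is preserved along the flow and that the $\xi_t$ can be chosen inside $\mathfrak a\subset \mathrm{Lie}(\mathbf G)$ so that they genuinely induce $\mathbb C^*$-degenerations of $M$ while still killing the drift of $x_t$.
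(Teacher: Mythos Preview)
Your plan is broadly the paper's own: reduce to a $\mathbf K$-invariant initial metric, use Delcroix's horosymmetric reduction to a parabolic real MA equation on the restricted chamber, track the minimum point $x_t$ of $w_t=\psi_t+j_0$, translate by Cartan-torus elements, obtain local $C^{l,\alpha}$ estimates, and pass to a limit soliton equation on a possibly larger cone. Two points need correction.

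First, the trichotomy is misstated. The point $x_t$ lives in the chamber $\mathfrak a_+'\subset\mathfrak a'$, not in the polytope $P$, and the paper's cases are not ``interior / wall of $P$ / escaping''. They are: (1) $|x_{t_i}|$ bounded (KE on $M$); (2) $|x_t|\to\infty$ but the projection $Pj(x_t)$ off the center stays bounded (KR soliton on $M$); (3) $|Pj(x_t)|\to\infty$, with the further splitting according to which restricted roots $\langle\alpha,x_t\rangle$ diverge. Only the last case produces a genuinely new horosymmetric limit space; the subset $\Phi_0\subset\Phi_s^+$ of diverging roots determines the enlarged unipotent set $\Phi_u''=\Phi_u\cup\Phi_0$, hence the new $\mathbf H'\subset\mathbf G$.

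Second, and more substantively, you assert that ``its metric completion is a $\mathbb Q$-Fano horosymmetric variety $M_\infty$'' and that the $\mathbb C^*$-degeneration induced by $\xi_\infty$ lands there. In the paper this is not automatic; it is the heart of the proof. One must (i) prove the volume identity ${\rm vol}(\mathbf N',\omega_{KS})={\rm vol}(M,\omega_0)$ via the level-set decay estimate for $e^{-w_t}$ so that the local limit has \emph{full mass}; (ii) invoke the HT conjecture to identify the GH limit with an abstract $\mathbb Q$-Fano variety $\hat M_\infty$ carrying a singular KR soliton; (iii) show the moment polytope of the algebraic $\mathbb C^*$-limit $\overline{\mathbf N'}$ is \emph{fine} and use the appendix (a horosymmetric version of \cite{LTZ2}) to produce an independent singular KR soliton on $\overline{\mathbf N'}$; and (iv) conclude $\hat M_\infty\cong\overline{\mathbf N'}$ by Han--Li uniqueness in the KE case, or by modified $K$-stability plus the CSW secondary degeneration in the non-KE case (where one also checks that the soliton vector $Y$ satisfies $\langle\alpha',Y\rangle=0$ for $\alpha'\in\Phi_s^+\setminus\Phi_u''$, so $Y$ and $X_\infty$ generate the same $\overline{\mathbf N'}$). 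Your appeal to $H$-invariant minimization alone does not supply this bridge between the metric completion and the algebraic degeneration limit.
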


As an application,  we generalize   Theorem  \ref{g-flow} to the case of   Fano  horosymmetric manifolds as follows.

 \begin{theo}\label{g-flow-horo} Let $(M,J)$ be a Fano   horosymmetric  manifold which admits no KR-soliton. Then any solution $\omega(t)$ of
(\ref{KRF}) with initial metric $\omega_0\in 2\pi c_1(M,J)$ will develop singularity of type II.
\end{theo}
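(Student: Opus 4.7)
The plan is to argue by contradiction along the same lines as the proof of Theorem \ref{g-flow}, now invoking Theorem \ref{main-theorem-horosymmetric} in place of Theorem \ref{main-theorem}. Suppose the flow (\ref{KRF}) develops at worst a type I singularity, i.e., $\sup_M |\mathrm{Rm}(\omega(t))|_{\omega(t)}$ is uniformly bounded in $t\in[0,\infty)$. Combining this with Perelman's non-collapsing estimate \cite{Pe, ST} and Q.~Zhang's non-expanding estimate \cite{Zhq}, one obtains uniformly bounded geometry for $(M,\omega(t))$. Hamilton's compactness theorem, together with the uniqueness of the Gromov--Hausdorff limit in KR flow \cite{CSW, WfZ2, HL1}, then upgrades the convergence to smooth Cheeger--Gromov convergence $(M,\omega(t))\to (M_\infty, \omega_\infty)$ with $(M_\infty,\omega_\infty)$ a \emph{smooth} K\"ahler--Ricci soliton.

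Next, I would apply Theorem \ref{main-theorem-horosymmetric}: the limit $(M_\infty, J_\infty)$ is a $\mathbb Q$-Fano horosymmetric variety arising as the central fiber of a $\mathbf G$-equivariant $\mathbb C^*$-degeneration of $(M,J)$ induced by an element in the Lie algebra of the Cartan torus, and it carries a KR soliton $\omega_{KS}$. The smoothness of $M_\infty$ obtained in the previous step makes it a smooth Fano horosymmetric manifold with a smooth KR soliton $\omega_{KS}$. I would then argue that such a Cartan-induced $\mathbb C^*$-degeneration with smooth central fiber must be trivial, so that $(M_\infty, J_\infty)\cong (M,J)$ as Fano horosymmetric manifolds. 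Pulling $\omega_{KS}$ back to $(M,J)$ through this identification yields a KR soliton on $M$, contradicting the hypothesis that $M$ admits no KR soliton.

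The main obstacle is this last triviality assertion. In the $\mathbf G$-manifold case treated in Theorem \ref{g-flow}, the proof exploits the fact that the moment polytope in the Cartan algebra is preserved along a Cartan-induced family and that smooth Fano $\mathbf G$-manifolds are rigid in their combinatorial class (cf. \cite{Del1, Del2}). For horosymmetric manifolds the classifying data is richer, involving the colored fan of $\overline{\mathbf G/\mathbf H}$ together with the symmetric structure on the fibers over the flag base \cite{Del3}; one must verify that under a Cartan-induced $\mathbb C^*$-degeneration with smooth central fiber this finer combinatorial data remains unchanged, thereby forcing the degeneration to be trivial. Granting this rigidity, the outline above produces the required contradiction and hence proves Theorem \ref{g-flow-horo}.
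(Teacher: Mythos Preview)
Your outline has a genuine gap at the triviality step. You need that a Cartan-induced $\mathbb C^*$-degeneration of a Fano horosymmetric manifold with \emph{smooth} central fiber is necessarily trivial, and you correctly flag this as the main obstacle---but it is not supplied anywhere. Smoothness of the central fiber does not by itself force a degeneration to be trivial, and the combinatorial rigidity you sketch (preservation of the colored-fan plus symmetric-fiber data under the degeneration) is neither proved in the paper nor obviously available in the literature in this generality. Without it you cannot conclude $(M_\infty,J_\infty)\cong(M,J)$, and the contradiction does not close. A cleaner repair of your contrapositive strategy would be to bypass Theorem~\ref{main-theorem-horosymmetric} altogether and invoke \cite{TZZZ, DS} directly: uniformly bounded curvature along the KR flow on a Fano manifold already forces convergence (modulo automorphisms) to a KR soliton on the \emph{same} $(M,J)$; this is exactly how the paper handles the analogous point for $\mathbf G$-manifolds in Remark~\ref{theorem2-remark}(4).

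The paper's own proof proceeds differently and avoids the triviality issue entirely. It works inside the finer case analysis of Theorem~\ref{main-theorem-horo-2} rather than through Theorem~\ref{main-theorem-horosymmetric}. Since $M$ admits no KR soliton, Cases~1) and~2) are excluded and one is in Case~3); the question then reduces to whether the set $\Phi_0\subset\Phi_s^+$ of roots with $\langle\beta,x_t\rangle\to\infty$ is empty. If $\Phi_0\neq\emptyset$, the argument of \cite[Lemma~4.4]{LTZ1} (cf.\ \cite[Lemma~6.4]{Zhu3}) shows directly that curvature blows up, giving type~II. If $\Phi_0=\emptyset$, then the root data $(\Phi_s'^{+},\Phi_u')$ of the limit horosymmetric space coincide with those of $\mathbf N$, so $\mathbf N'=\mathbf N$; the limit soliton $\omega_{KS}$ has full mass on $M$, and Corollary~\ref{extension-KR} together with uniqueness \cite{BN, Bern, BBEGZ, HL2} and the regularity result \cite[Lemma~7.2]{WfZ2} upgrades it to a smooth KR soliton on $(M,J)$, contradicting the hypothesis. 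Thus $\Phi_0\neq\emptyset$ is forced. In effect, the paper replaces your unproven ``smooth central fiber $\Rightarrow$ trivial degeneration'' by the concrete combinatorial statement ``$\Phi_0=\emptyset$ $\Rightarrow$ the horosymmetric structure is unchanged,'' which is immediate from the construction.
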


The proof of Theorem \ref{main-theorem-horosymmetric} is almost identical to that of  Theorem \ref{main-theorem}, so we will only sketch its proof in Section 6.

\section {$\mathbf K$-invariant  metrics on horosymmetry spaces}

In this paper,  we always assume that $\mathbf G$ is a reductive Lie group which is a complexification of compact Lie group $\mathbf K$. Let $\mathbf T^\mathbb C$ be an $r$-dimensional maximal complex torus of $\mathbf G$ with its Lie algebra $\mathfrak t^{\mathbb C}$ and $\mathfrak M$ the group of characters of $\mathfrak t^{\mathbb C}$. Denote the roots system of $(\mathbf G, \mathbf T^\mathbb C)$ in $\mathfrak M$ by $\Phi$ and choose a set of positive roots by $\Phi_+$.
Then each element in $\Phi$ can be regarded as the one of $\mathfrak a^*$, where $\mathfrak a^*$ is the dual of the non-compact part $\mathfrak a$ of $\mathfrak t^{\mathbb C}$.

\subsection{$\mathbf K\times \mathbf K$-invariant metrics on $\mathbf G$-manifolds }

 By the $\mathbf K\times \mathbf K$-invariance of K\"ahler metric  $\omega$ on $\mathbf G$,  the restriction of  $\omega$ on $\mathbf T^{\mathbb C}$ is an open toric K\"ahler metric.
Thus, it induces  a strictly convex  function $\psi$ on ${\mathfrak a}$  (also see Lemma \ref{Hessian} below)  such that
\begin{align}\label{convex-potential}\omega\,=\,\sqrt{-1}\partial\bar\partial \psi, ~{\rm on}~ \mathbf T^{\mathbb C}.
\end{align}

On the other hand, by the KAK-decomposition (\cite[Theorem 7.39]{Kn}), for any $g\in \mathbf G$,
there are $k_1,\,k_2\in \mathbf K$ and $x\in\mathfrak a$ such that $g=k_1\exp(x)k_2$. Here $x$ is uniquely determined up to a Weyl-action. This means that $x$ is unique in $\overline{\mathfrak a_+}$, where   $\overline{\mathfrak a_+}$ is  the close cone of
$\mathfrak a_+$ called  the Weyl chamber by
$$\mathfrak a_+=\{x\in \mathfrak a|~<\alpha, x>>0,~\forall ~\alpha\in\Phi_+\}.$$
Thus there is a bijection between $\mathbf K\times \mathbf K$-invariant functions $\phi$ on $\mathbf G$ and Weyl-invariant functions  $\psi$ on
$\mathfrak a$ which is given  by
$$\phi_\psi=\psi (\exp^{-1}(\cdot)):~{\mathbf T^{\mathbb C}}\to\mathbb R.$$
Hence,  (\ref{convex-potential}) can be extended on  $\mathbf G$ such  that
\begin{align}\label{convex-potential-global}\omega_\psi\,=\,\sqrt{-1}\partial\bar\partial \phi_\psi, ~{\rm on}~ \mathbf G.
\end{align}
 Without of  confusion,  we will not distinguish $\psi$ and $\phi_\psi$,  and  call $\phi_\psi$ (or $\psi$) convex on $G$ if  $\psi$  is   Weyl-invariant   convex on ${\mathfrak a}$.

The following KAK-integration formula can be found in \cite[Proposition 5.28]{Kn}.

\begin{prop}\label{KAK int}
Let $dV_{\mathbf G}$ be a Haar measure on $\mathbf G$ and $dx$ the Lebesgue measure
on $\mathfrak{a}$.
Then there exists a constant $C_H>0$ such that for any
$\mathbf K\times  \mathbf K$-invariant, $dV_{\mathbf G}$-integrable function $\psi$ on $\mathbf G$,
$$\int_{\mathbf G} \psi(g)\,dV_{\mathbf G}\,=\, C_H\,\int_{\mathfrak{a}_+}\psi(x){J}(x)\,dx,$$
where
\begin{align}\label{j-function} J(x)\,=\,\prod_{\alpha \in \Phi_+} \sinh^2\alpha(x).
\end{align}
\end{prop}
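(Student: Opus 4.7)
The plan is to deduce the integration formula from the KAK decomposition $\mathbf G=\mathbf K\cdot\exp(\overline{\mathfrak a_+})\cdot\mathbf K$ by computing the Jacobian of the polar map and then using the $\mathbf K\times\mathbf K$-invariance of $\psi$ to absorb the two $\mathbf K$-integrations into the constant $C_H$. The heart of the matter is a linear-algebra computation on $\mathfrak g$.

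First, I would introduce the polar map
$$\Phi\colon(\mathbf K\times\mathbf K)/\Delta\mathbf T\times\mathfrak a_+\longrightarrow\mathbf G^{\mathrm{reg}},\qquad([k_1,k_2],x)\longmapsto k_1\exp(x)k_2^{-1},$$
where $\Delta\mathbf T$ is the diagonal torus acting by $(m,m)\cdot(k_1,k_2)=(k_1 m^{-1},k_2 m^{-1})$ and $\mathbf G^{\mathrm{reg}}$ is the open subset where the KAK decomposition is unique modulo the Weyl group. It is standard that $\Phi$ is a diffeomorphism onto its image and that the complement has Haar measure zero, so the integral splits as
$$\int_{\mathbf G}\psi\,dV_{\mathbf G}\,=\,\int_{\mathfrak a_+}\!\!\int_{(\mathbf K\times\mathbf K)/\Delta\mathbf T}\psi(k_1\exp(x)k_2^{-1})\,|\mathrm{Jac}\,\Phi|(x)\,d\mu\,dx,$$
and the $\mathbf K\times\mathbf K$-invariance of $\psi$ collapses the inner integral to $\mathrm{vol}((\mathbf K\times\mathbf K)/\Delta\mathbf T)\cdot\psi(\exp x)$.

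Next I would compute $|\mathrm{Jac}\,\Phi|(x)$ at the model point $([e,e],x)$ by left-translating back to the identity. Using the root-space decomposition $\mathfrak g=\mathfrak t^{\mathbb C}\oplus\bigoplus_{\alpha\in\Phi}\mathfrak g_\alpha$ and the fact that $\mathrm{Ad}(\exp x)$ acts on $\mathfrak g_\alpha$ by $e^{\alpha(x)}$, the differential $d\Phi$ sends $(X_1,X_2,Y)\in\mathfrak k\oplus\mathfrak k\oplus\mathfrak a$ modulo the diagonal $\mathfrak t$ to $\mathrm{Ad}(\exp(-x))X_1-X_2+Y\in\mathfrak g$. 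On each two-dimensional real root subspace $\mathfrak k_\alpha\subset\mathfrak k$ spanned by $E_\alpha-E_{-\alpha}$ and $i(E_\alpha+E_{-\alpha})$, the map $\mathrm{Ad}(\exp(-x))$ decomposes into a $\cosh\alpha(x)$-component along $\mathfrak k_\alpha$ and a $\sinh\alpha(x)$-component along the orthogonal real subspace $i\mathfrak k_\alpha\subset i\mathfrak k$. Assembling the $4\times 4$ block for the pair $(X_1^\alpha,X_2^\alpha)$ mapping into $\mathfrak k_\alpha\oplus i\mathfrak k_\alpha$ and evaluating
$$\det\begin{pmatrix}\cosh\alpha(x)\,I_2 & -I_2\\ \sinh\alpha(x)\,I_2 & 0\end{pmatrix}\,=\,\sinh^2\alpha(x),$$
one gets a single factor of $\sinh^2\alpha(x)$ per positive root, so that $J(x)=\prod_{\alpha\in\Phi_+}\sinh^2\alpha(x)$ after taking the product over $\Phi_+$. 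Combining with the previous step and absorbing the volume of $(\mathbf K\times\mathbf K)/\Delta\mathbf T$ into $C_H$ yields the claimed identity.

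The main obstacle I anticipate is the Jacobian step: one must correctly identify the diagonal $\Delta\mathfrak t$ as the kernel of the differential of the unquotiented polar map so that after passing to the quotient the induced linear map is an isomorphism for $x\in\mathfrak a_+$, and one must choose compatible orthonormal bases on $\mathfrak k_\alpha$ and $i\mathfrak k_\alpha$ so that the $\sinh^2$ factor appears without spurious scalar artefacts. Once these linear-algebra conventions are fixed, the behavior on the walls of $\mathfrak a_+$ is automatic, since the regular locus is open and dense and carries full Haar measure.
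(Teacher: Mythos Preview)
Your proof is essentially correct and follows the standard route (compute the Jacobian of the polar map coming from the KAK decomposition and use bi-$\mathbf K$-invariance to integrate out the compact factor). There is nothing to compare against, however: the paper does not prove this proposition at all. It is quoted as a known fact with a reference to \cite[Proposition 5.28]{Kn}, and the argument you outline is precisely the one found there (and in Helgason).

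Two small remarks on your write-up. First, your notation $E_\alpha, E_{-\alpha}$ for the complex root vectors clashes with the paper's convention, where $E_\alpha=X_\alpha-X_{-\alpha}$ and $E_{-\alpha}=J(X_\alpha+X_{-\alpha})$ already denote the real basis of $\mathfrak k_\alpha$; you should adjust to match. Second, the lower-left $2\times2$ block in your Jacobian is not literally $\sinh\alpha(x)\,I_2$: if you compute $\mathrm{Ad}(\exp(-x))$ on the basis $\{X_\alpha-X_{-\alpha},\,J(X_\alpha+X_{-\alpha})\}$ you get an antisymmetric matrix $\bigl(\begin{smallmatrix}0&-\sinh\alpha(x)\\ \sinh\alpha(x)&0\end{smallmatrix}\bigr)$ in the $i\mathfrak k_\alpha$-component. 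This does not affect the outcome, since its determinant is still $\sinh^2\alpha(x)$, and you correctly flagged that the precise form of the block depends on the choice of basis.
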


Without loss of generality, we may normalize $C_H=1$ for simplicity.

Next we recall a local holomorphic coordinate system on $\mathbf G$ used in \cite{Del1}. By the standard Cartan decomposition, we can decompose
$\mathfrak g$ as
$$\mathfrak g\,=\,\left(\mathfrak t\oplus\mathfrak a\right)\oplus\left(\oplus_{\alpha\in\Phi}V_{\alpha}\right),$$
where $\mathfrak t$ is the Lie algebra of  $T$ and
 $$V_{\alpha}\,=\,\{X\in\mathfrak g|~ad_H(X)\,=\,\alpha(H)X,~\forall H\in\mathfrak t\oplus\mathfrak a\}$$
 is the root space of complex dimension $1$ with respect to $\alpha$.  By \cite{Hel}, one can choose $X_{\alpha}\in V_{\alpha}$ such that $X_{-\alpha}=-\iota(X_{\alpha})$ and
$[X_{\alpha},X_{-\alpha}]=\alpha^{\vee},$ where $\iota$ is the Cartan involution and $\alpha^{\vee}$ is the dual of $\alpha$ by the Killing form.
Let
\begin{align}\label{lie-vector}
E_{\alpha}=X_{\alpha}-X_{-\alpha},~E_{-\alpha}=J(X_{\alpha}+X_{-\alpha}).
\end{align}
  Denoted by $\mathfrak k_{\alpha},\,\mathfrak k_{-\alpha}$ the real line spanned by $E_\alpha,\,E_{-\alpha}$, respectively.
Then we get  the Cartan decomposition of  Lie algebra $\mathfrak k$ of $\mathbf K$ as follows,
$$\mathfrak k=\mathfrak t\oplus\left(\oplus_{\alpha\in\Phi_+}\left(\mathfrak k_{\alpha}\oplus\mathfrak k_{-\alpha}\right)\right).$$

 Choose a real basis $\{E^0_1,...,E^0_r\}$ of $\mathfrak t$, where  $r$ is the dimension of $\mathbf T$.  Then $\{E^0_1,...,E^0_r\}$ together with $\{E_{\alpha},E_{-\alpha}\}_{\alpha\in\Phi_+}$ forms a real basis of $\mathfrak k$, which is indexed by $\{E_1,...,E_n\}$. We can also regard $\{E_1,...,E_n\}$
 as a complex basis of $\mathfrak g$. For any $g\in \mathbf G$, we define local coordinates $\{z_{(g)}^i\}_{i=1,...,n}$ on a neighborhood of $g$ by
\begin{align}\label{local-coordi}(z_{(g)}^i)\to\exp(z_{(g)}^i E_i)g.
\end{align}
It is easy to see that $\theta^i|_g\,=\,dz_{(g)}^i|_g$,  where the dual $\theta^i$ of $E_i$  is a right-invariant holomorphic $1$-form.
Thus
\begin{align}\label{Haar-mes}
dV_{\mathbf G}|_g:=\displaystyle{\wedge_{i=1}^n\left(dz_{(g)}^i\wedge d\bar{z}_{(g)}^i\right)}|_g,~\forall g\in \mathbf G
\end{align}
is also a right-invariant $(n,n)$-form,
which defines a Haar measure.

For a $K\times K$-invariant function $\psi$, Delcroix computed the Hessian of $\psi$ in the above local coordinates as follows \cite[Theorem 1.2]{Del1}.

\begin{lem}\label{Hessian}
Let $\psi$ be a $\mathbf K\times \mathbf K$ invariant function on $\mathbf G$.
Then  for any $x\in \mathfrak{a}_+$,
the complex Hessian matrix of $\psi$  in the  above coordinates is diagonal by blocks, and equals to
\begin{equation}\label{hess}
\mathrm{Hess}_{\mathbb{C}}(\phi_{\psi})(\exp(x)) =
\begin{pmatrix}
\frac{1}{4}\mathrm{Hess}_{\mathbb{R}}(\psi)(x)& 0 &  & & 0 \\
 0 & M_{\alpha_{(1)}}(x) & & & 0 \\
 0 & 0 & \ddots & & \vdots \\
\vdots & \vdots & & \ddots & 0\\
 0 & 0 &  & & M_{\alpha_{(\frac{n-r}{2})}}(x)\\
\end{pmatrix},
\end{equation}
where $\Phi_+=\{\alpha_{(1)},...,\alpha_{(\frac{n-r}{2})}\}$ is the set of positive roots and
\[
M_{\alpha_{(i)}}(x) = \frac{1}{2}(\alpha_{(i)},\nabla \psi(x))
\begin{pmatrix}
\coth\alpha_{(i)}(x) & \sqrt{-1} \\
-\sqrt{-1} & \coth\alpha_{(i)}(x) \\
\end{pmatrix}.
\]
We denote $(,)$ the Killing inner product of dual space of Lie algebra $\mathfrak t$.
\end{lem}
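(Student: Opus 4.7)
The approach is to use the KAK-decomposition to reduce the computation of the complex Hessian at $g_0 = \exp(x)$ to a second-order expansion on $\mathfrak{a}$. For each small $X = \sum_i z^i E_i \in \mathfrak{g}$, I would write
\[
\exp(X)\exp(x) = k_1(X)\,\exp(x + \rho(X))\,k_2(X),
\]
with $k_j(X) \in \mathbf K$ and $\rho(X) \in \mathfrak a$ smooth in $X$, $\rho(0) = 0$. The $\mathbf K\times \mathbf K$-invariance gives $\phi_\psi(\exp(X)\exp(x)) = \psi(x + \rho(X))$, so
\[
\partial_{z^i}\partial_{\bar z^j}\phi_\psi\bigr|_{g_0} = d\psi_x\bigl(\partial_{z^i}\partial_{\bar z^j}\rho(0)\bigr) + \mathrm{Hess}_{\mathbb R}\psi(x)\bigl(\partial_{z^i}\rho(0),\partial_{\bar z^j}\rho(0)\bigr),
\]
and the block structure in (\ref{hess}) will emerge by analyzing how each basis direction contributes to $\rho$.

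For the top-left $\mathfrak{t}$-block, since $\mathfrak{t}$ commutes with $\mathfrak{a}$ inside the Cartan subalgebra, for $X = \sum_{i \le r} z^i E^0_i$ we have $\exp(X)\exp(x) = \exp(\sum s^i E^0_i)\cdot\exp(x + \sum t^i (\sqrt{-1}\,E^0_i))$ after splitting $z^i = s^i + \sqrt{-1}\,t^i$ and identifying $\sqrt{-1}\,\mathfrak{t}$ with $\mathfrak{a}$. The compact torus factor $\exp(\sum s^i E^0_i) \in \mathbf K$ is absorbed by $k_1$, so $\rho(X) = \sum t^i (\sqrt{-1}\,E^0_i)$ depends only on the imaginary parts of $z$; the elementary identity $\partial_z \partial_{\bar z} = \tfrac14 (\partial_s^2 + \partial_t^2)$ then produces exactly the $\tfrac14 \mathrm{Hess}_{\mathbb R}(\psi)(x)$ block.

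For the $\alpha$-block corresponding to a single positive root $\alpha$, substituting the definitions of $E_{\pm\alpha}$ from (\ref{lie-vector}) into $X = z_\alpha E_\alpha + z_{-\alpha} E_{-\alpha}$ rewrites $X = a X_\alpha + b X_{-\alpha}$ with $a = z_\alpha + \sqrt{-1}\,z_{-\alpha}$, $b = -z_\alpha + \sqrt{-1}\,z_{-\alpha}$. Using the adjoint scaling $\mathrm{Ad}(\exp(-x))X_{\pm\alpha} = e^{\mp\alpha(x)}X_{\pm\alpha}$, I would rewrite $\exp(X)\exp(x) = \exp(x)\exp(a e^{-\alpha(x)}X_\alpha + b e^{\alpha(x)}X_{-\alpha})$ and perform the KAK-decomposition inside the $SL(2,\mathbb C)$-subgroup generated by the $\alpha$-triple $\{X_\alpha, X_{-\alpha}, \alpha^\vee\}$. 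A second-order Baker--Campbell--Hausdorff expansion shows that the $\mathfrak a$-component of $\rho$ is proportional to $[X_\alpha, X_{-\alpha}] = \alpha^\vee$, with the coefficient assembled from $a,b$ together with $\sinh\alpha(x)$ and $\cosh\alpha(x)$. Pairing with $d\psi(x)$ produces the overall factor $(\alpha,\nabla\psi(x))$ via $d\psi_x(\alpha^\vee)$; the hyperbolic factors collapse to $\coth\alpha(x)$ on the diagonal of $M_\alpha(x)$, while the off-diagonal $\pm\sqrt{-1}$ entries come from the $\sqrt{-1}$-factor in the definition of $E_{-\alpha}$.

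Finally, cross derivatives between distinct blocks vanish: for two different roots $\alpha, \beta$ with $\beta \ne -\alpha$, one has $[V_\alpha, V_\beta] \subset V_{\alpha+\beta}$ which lies outside the Cartan and hence projects trivially onto $\mathfrak a$; similarly, $[\mathfrak t, V_\alpha] \subset V_\alpha$ contributes nothing to $\rho$. The principal technical obstacle is the $\alpha$-block calculation: one must carefully execute the second-order expansion inside $SL(2, \mathbb C)$ and track the interplay between the $\sqrt{-1}$ in $E_{-\alpha}$ and the adjoint scaling by $e^{\mp\alpha(x)}$ so as to land on exactly $\tfrac12(\alpha, \nabla\psi(x))\coth\alpha(x)$ on the diagonal and $\pm\tfrac{\sqrt{-1}}{2}(\alpha,\nabla\psi(x))$ off the diagonal.
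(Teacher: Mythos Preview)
The paper does not supply a proof of this lemma; it is quoted verbatim from Delcroix \cite[Theorem 1.2]{Del1}. Your sketch is essentially the strategy Delcroix carries out there: use the KAK-decomposition to write $\phi_\psi(\exp(X)\exp(x))=\psi(x+\rho(X))$, compute the toral block via the real/imaginary split of the $z^i$ (giving the factor $\tfrac14$), and handle each root block inside the copy of $SL(2,\mathbb C)$ generated by $\{X_\alpha,X_{-\alpha},\alpha^\vee\}$ via a second-order BCH expansion combined with the adjoint scaling $\mathrm{Ad}(\exp(-x))X_{\pm\alpha}=e^{\mp\alpha(x)}X_{\pm\alpha}$. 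The vanishing of cross-block terms via $[V_\alpha,V_\beta]\subset V_{\alpha+\beta}$ is also how Delcroix argues. Your identification of the technical crux---tracking the $\sqrt{-1}$ in $E_{-\alpha}$ through the $SL(2,\mathbb C)$ computation to land on $\tfrac12(\alpha,\nabla\psi)\coth\alpha(x)$ and $\pm\tfrac{\sqrt{-1}}{2}(\alpha,\nabla\psi)$---is accurate, and nothing in the outline is wrong.
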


By (\ref{hess}) in Lemma \ref{Hessian},
we see that $\psi$ induced by a  $\mathbf K\times \mathbf K$-invariant K\"ahler form $\omega$ is convex on $\mathfrak{a}$.
The complex  MA  measure is given by
\begin{align}\label{MA-omega}
\omega_\psi^n\,=\,(\sqrt{-1}\partial\bar{\partial}\phi_\psi)^n\,=\,\mathrm{MA}_{\mathbb C}(\phi_\psi)\,dV_{\mathbf G}.
\end{align}
By \eqref{Haar-mes}, for any $x\in\mathfrak a_+$ we have
\begin{equation}\label{MA}
\mathrm{MA}_{\mathbb{C}}(\phi_\psi)(\exp(x))\,=\, \frac{1}{2^{r+n}}
\mathrm{MA}_{\mathbb{R}}(\psi)(x)\frac{1}{J(x)}\prod_{\alpha \in \Phi_+}(\alpha, \nabla \psi(x))^2
\end{equation}
In particular, by Proposition \ref{KAK int},
\begin{align}\label{volume}
{\rm vol}(M, \omega_\psi)\,=\,\int_M \omega_\psi^n=C_0\int_{2P_+}  \pi\,dy\,=C_0V_0=\,V,
\end{align}
where
 $P_+$ is the quotient   space  of  moment polytope $P$ by  the Weyl group $W$.
 and
 \begin{align}\label{pi}
\pi(y)=\prod_{\alpha \in \Phi_+}(\alpha, y)^2.
\end{align}

For a $\mathbf K\times \mathbf K$-invariant KE metric defined by a convex function $\psi$ as in  Lemma \ref{Hessian},  by (\ref{MA}), it is easy to  reduce the KE equation  to the following  real MA-type  equation  on the cone  $\mathfrak a_+$  (cf. \cite{Del1}),
\begin{align}\label{reduced-KE-equ-g}
 \pi (\nabla\psi){\rm Hess}(\nabla^2 \psi)= J(x) e^{-\psi}.
 \end{align}

\subsection{Geometry of horosymmetry spaces}

 Let $\mathbf H$ be a subgroup of  $\mathbf G$.   A  homogeneous   space $\mathbf G/\mathbf H$ is called  horosymmetric if there is a parabolic subgroup  $\mathbf P$  of  $\mathbf G$ such that   $\mathbf P/\mathbf H$ is a
  symmetric  space and  $\mathbf G/\mathbf H$ is a    $\mathbf P/\mathbf H$-fibration  over the generalized flag  manifold $\mathbf G/\mathbf P$ \cite{Del3}.
  In particular,  if the Lie algebra  of $\mathbf H$  contains all $X_\alpha$ with $\alpha\in \Phi_+$,    the homogeneous   space $\mathbf G/\mathbf H$ is called   horospherical.

According to  \cite{Del3}, there is a Levi subgroup $\mathbf L$ of  $\mathbf P$ and an involution $\sigma$ of $\mathbf L$ such that the following is true for the Lie algebras of $\mathbf P$ and $\mathbf H$:

1) The Lie algebra of $\mathbf P$ can be divided  into three parts as follows,
\begin{align}\label{lie-p}\mathfrak p=\mathfrak t+\sum_{\alpha\in \Phi_{\mathbf L}}X_\alpha + \sum_{\beta\in \Phi_{u}} X_{-\beta},
\end{align}
where
$\Phi_L$  is a root system  of $\mathbf L $, which is a subset of  root system $\Phi$ of $\mathbf G$
 and $\Phi_{u}$ is a subset of positive root system $\Phi_+$ of $\mathbf G$.

2)
The Lie algebra of $\mathbf H$ can be represented  as,
\begin{align}\label{lie-h}\mathfrak h=\mathfrak t_0  +   \sum_{\alpha'\in \Phi_{{\mathbf L}^\sigma}} X_{\alpha'} + \sum_{\beta\in \Phi_{u}} X_{-\beta},
\end{align}
where  $\mathfrak t_0$ is a subtorus of  $\mathfrak t$ which is fixed by $\sigma$ and
 $\Phi_{{\mathbf L}^\sigma}$ is a root system  of fixed subgroup $\mathbf L^\sigma $ of $\mathbf L$.
 Clearly,  $\mathbf G/\mathbf H$ is  horospherical if and only if  $\Phi_{\mathbf L}=\Phi_{\mathbf L^\sigma} $.

On a  horosymmetric spaces $\mathbf G/\mathbf H$,      we can consider   $\mathbf K$-invariant metrics
   (cf. \cite {Del3}). Let
   $$\Phi_s^+=\Phi_+\setminus \Phi_{\mathbf L^\sigma}$$
and
$$\rho_u=\frac{1}{2}\sum_{\beta\in \Phi_u} \beta.$$
Then  for a $\mathbf K$-invariant metric, there is a convex function $\psi$ on   $\mathfrak a'$, where $\mathfrak a'$ is the real part of $\mathfrak t'$  $(\cong \mathfrak t/\mathfrak t_0)$ with dimension $r_0$ such that the metric can be  divided  into three  parts as follows (cf. \cite{Del3}),
\begin{align}\label{horosymmetry-metric}
\omega_{\psi}(\exp\{x\}\cdot \mathbf H) =&\frac{\sqrt{-1}}{4}\sum_{i,j=1,...,r_0} ({\rm Hess}(\psi))_{i\bar j} dz^i\wedge d\bar z^j\notag\\
&+\frac{\sqrt{-1}}{2}\sum_{\alpha\in \Phi_s^+}   \frac {(\alpha, \nabla\psi)}{\sinh<\alpha,\cdot>} dz^{i_{\alpha}}\wedge d\bar z^{i_{\alpha}}\notag\\
&+\frac{\sqrt{-1}}{2}\sum_{\beta\in \Phi_u}   e^{-<\beta, x>} (\beta,\nabla\psi) dz^{i_\beta}\wedge d\bar z^{i_\beta}.
\end{align}
Since $\psi$ is invariant under the restricted Weyl group associated to    $\Phi_s^+$ \cite{Del3},  $x\in \mathfrak a'$ can be restricted on a  cone $\mathfrak a_+'$ in  $\mathfrak a'$,
\begin{align}\label{cone-a'}\mathfrak a_+'=\{x\in \mathfrak a'|~<\alpha, x>>0,~\forall ~\alpha\in\Phi_s^+\}.
\end{align}

\begin{ex}\label{ex-g}  A reductive Lie group $\mathbf G$ can be regarded as a  horosymmetry space
$${\mathbf G}=\hat{\mathbf G}/{\rm diag}(\mathbf G\times \mathbf G)$$
with the parabolic subgroup   $\hat{\mathbf P}=\hat {\mathbf G}=\mathbf G\times \mathbf G$. By  $\sigma(g_1, g_2)=(g_2, g_1)$, we have
$$\hat{\Phi}_s^+= \{(\alpha, 0)\cup(0,-\alpha')|~\alpha, \alpha'\in  \Phi_+\}.$$

\end{ex}

By (\ref{horosymmetry-metric}), we have
\begin{align}\label{horosymmetry-G}
\omega_{\psi}(\exp\{x\}\cdot \mathbf H) &=\frac{\sqrt{-1}}{4}\sum_{i,j=1,...,r} ({\rm Hess}(\psi))_{i\bar j} dz^i\wedge d\bar z^j\notag\\
&+\frac{\sqrt{-1}}{2}\sum_{\alpha\in \Phi_+}   \frac{(\alpha, \nabla\psi)}{\sinh<\alpha,\cdot>} (dz^{i_{\alpha}}\wedge d\bar z^{i_{\alpha}}+dz^{i_{-\alpha}}\wedge d\bar z^{i_{-\alpha}}).
\end{align}
Then by the relation (\ref{lie-vector}), it is easy to see that (\ref{horosymmetry-G}) coincides   with (\ref{hess}).

\begin{ex}\label{ex-degereration} $\mathbf C^*$-degenerations on   a $\mathbf G$-manifold.  Let $Z, Z'\in \mathfrak M$ on a horosymmetric  variety   $M$ and an integer $m$.  In \cite[Proposition 3.23]{Del2},  a pair  $(Y=Z-Z', m)$ can define  a  deformation $\mathcal M$ of  $\mathbf C^*$-degeneration on $M$  which  can be regarded as a compactification of homogenous space
  $$\mathbf G \times \mathbb C^*/ \mathbf H\times \mathbf e. $$
 In case of  $\mathbf G$-manifolds,  $(Z, Z')$ induces a restricted  $\mathbf C^*$-degeneration on  $\mathbf G$ as a  horosymmetry space in Example \ref{ex-g} has been described in \cite{LL}  as follows.

Let $\{\alpha_1,..., \alpha_{r_0}\}$ be  a subset  of $\Phi_+$  which consists of all simple roots in $\Phi_+$ such that
\begin{align}\label{tangent}<\alpha_i, Y>=0,~i=1,...,r_0.
\end{align}
Denote
 $\{\alpha_{r_0+1},..., \alpha_{r_1}\}$ to  be the set of  other simple roots in $\Phi_+$ which does  not satisfy   (\ref{tangent}).
Then there is a subset of  $\Phi_+$,  $ \{\alpha_{r_1+1},..., \alpha_{r_2}\}$  each of  which  is a combinator in  $\{\alpha_1,..., \alpha_{r_0}\}$.
Denote    $ \{\alpha_{r_2+1},..., \alpha_{r_3}\}$ to be the  remaining  subset of  $\Phi_+$ each of  which does not lie in  the above three subsets of  $\Phi_+$.

We define a subgroup $\mathbf H$ of $\hat{\mathbf G}=\mathbf G\times \mathbf G$ with  its Lie  subalgebra  $\mathfrak h$  generated by a basis
\begin{align}\label{lie-subalgebra}
&\{(Y, Y),  {\rm diag}\{Y^{\bot}\};\notag\\
&(X_{\alpha}, X_{\alpha}), (X_{-\alpha}, X_{-\alpha}), ~\alpha\in   \Phi_+\setminus \Phi_u=\{\alpha_1,..., \alpha_{r_0}, \alpha_{r_1+1},..., \alpha_{r_2}\};\notag\\
&(X_{\beta}, 0), (0, X_{-\beta}), ~\beta\in  \Phi_u= \{\alpha_{r_0+1},..., \alpha_{r_1}, \alpha_{r_2+1},..., \alpha_{r_3}\} \}.
\end{align}
Then $\mathbf N=\hat{\mathbf G}/\mathbf H$ is a horosymmetric space. The  parabolic Lie subgroup $\mathbf P$ of $ \hat{\mathbf G}$  is generated by a basis
 \begin{align}\label{parabolic-subg-1} &\{(E_i, E_j), ~i, j=1,...,r;  \notag\\
 &(X_{\alpha}, X_{\alpha'}), (X_{-\alpha},  X_{-\alpha'}), ~\alpha, \alpha'\in  \Phi_+\setminus \Phi_u; \notag\\
&(X_{\beta}, 0), (0,  X_{-\beta}), ~\beta\in \Phi_u\}.
 \end{align}

\end{ex}

In Example \ref{ex-degereration},  we have
$$\hat{\Phi}_s^+=\{(\alpha,0)\cup (0, -\alpha) |~\alpha \in \Phi_+\setminus \Phi_u\}$$
and
$$\hat\Phi_u=\{(\beta,0)\cup (0, -\beta)|~\beta\in \Phi_u \}, $$
where   $ \Phi_u=\{\alpha_{r_0+1},..., \alpha_{r_1}, \alpha_{r_2+1},..., \alpha_{r_3}\}$.
Thus any $\mathbf K\times \mathbf K$-invariant metric on  $\mathbf N$ is  determined  by  a convex function $\psi$ such that
\begin{align}\label{horosymmetry-LL}
\omega_{\psi}(\exp\{x\}\cdot \mathbf H) &=\frac{\sqrt{-1}}{4}\sum_{i,j=1,...,r}  ({\rm Hess}(\psi))_{i\bar j} dz^i\wedge d\bar z^j\notag\\
&+\frac{\sqrt{-1}}{2}\sum_{\alpha\in \Phi_+\setminus \Phi_u}   \frac{(\alpha, \nabla\psi)}{\sinh<\alpha,\cdot>} (dz^{i_{\alpha}}\wedge d\bar z^{i_{\alpha}}+dz^{i_{-\alpha}}\wedge d\bar z^{i_{-\alpha}})\notag\\
&+\frac{\sqrt{-1}}{2}\sum_{\beta\in \Phi_u}   e^{-<\beta, x>}  (\beta, \nabla\psi ) (dz^{i_\beta}\wedge d\bar z^{i_\beta} +dz^{i_{-\beta}}\wedge d\bar z^{i_{-\beta}}).
\end{align}
The above $x\in \mathfrak a$  can be  restricted on a cone $ \mathfrak a_+'$
in   $\mathfrak a$  associated to  the root subsystem $\hat\Phi_s^{+}$,
$$\mathfrak a_+'=\{x\in \mathfrak a|~<\alpha, x>>0,~\forall ~\alpha\in\Phi_+\setminus \Phi_u \}. $$

\subsection{Reduced KR soliton equation  on horosymmetry manifolds }

 As in (\ref{j-function}),   we introduce a function on   $ \mathfrak a_+'$ by
\begin{align}\label{j'-function} J_0(x)\,=\,\prod_{\alpha \in \Phi_s^+} \sinh\alpha(x).
\end{align}
Then, analogous to (\ref{reduced-KE-equ-g}), by (\ref{horosymmetry-metric}),   the KE   equation on a  horosymmetry manifold  can be reduced to the following  real MA-type equation on $\mathfrak a_+'$,
\begin{align}\label{reduced-KE-equ}
 \pi_0 (\nabla\psi+ 2\rho_u){\rm Hess}(\nabla^2 \psi)= J_0(x) e^{-\psi},
 \end{align}
 where
 \begin{align}\label{pi'}
\pi_0(y)= \prod_{\beta \in \Phi_s^+\cup \Phi_u}(\alpha, y).
\end{align}
It is known that  for any convex function $\psi$  in  (\ref{horosymmetry-metric})   there is an $A_0>0$ such that  (cf. \cite{PS}),
\begin{align}\label{unipotent-part}
\prod_{\beta \in \Phi_u}(\nabla\psi(x)+ 2\rho_u, \beta)\ge A_0, ~x\in ~ \mathfrak a_+.
\end{align}

Similarly,  for a KR soliton with respect to a HVF $X$ induced by an element in the  Lie algebra $\mathfrak t'$, we get a reduced KR soliton equation on $\mathfrak a_+'$ (also see \cite{DH}),
\begin{align}\label{reduced-KR-soliton}
 \pi_0(\nabla\psi+ 2\rho_u){\rm Hess}(\nabla^2 \psi)= J_0(x) e^{-\psi-X(\psi)}.
 \end{align}
 Thus in Example \ref{ex-degereration},  by (\ref{horosymmetry-LL}),  (\ref{reduced-KR-soliton}) becomes  \footnote{Here $2\rho$ in (\ref{reduced-KR-soliton})  is replaced by $4\rho$ by the notation $\Phi_u$ in (\ref{lie-subalgebra}).}

    \begin{align}\label{KR-soliton-LL}
 \pi(\nabla\psi+ 4\rho_u){\rm Hess}(\nabla^2 \psi)= J'(x) e^{-\psi-X(\psi)},
  \end{align}
 where $\pi(y)$ is defined by (\ref{pi}) and
 \begin{align}\label{j'-function-n} J'(x)\,=\,\prod_{\alpha \in \Phi_+\setminus \Phi_u} \sinh^2\alpha(x).
\end{align}
(\ref{KR-soliton-LL}) is defined on a   cone $\mathfrak a_+'$
in   $\mathfrak a$  by
$$\mathfrak a_+'=\{x\in \mathfrak a|~<\alpha, x>>0,~\forall ~\alpha\in \Phi_{+}\setminus \Phi_u\}. $$

\section{Reduced K\"ahler-Ricci flow}

By (\ref{reduced-KE-equ-g}), for a  $\mathbf K\times \mathbf K$-invariant initial metric in $2\pi c_1(M)$,  KR flow (\ref{KRF})  on a Fano $\mathbf G$-manifold
 $M$ can be reduced to  the following parabolic equation of MA type   on $\mathfrak a_+$,
  \begin{align}\label{reduced-KRflow}
\frac{\partial \psi}{\partial t}=\log [ \pi (\nabla\psi){\rm Hess}(\nabla^2 \psi)]+(\psi+j),
   \end{align}
where
  $$j(x)=-\log J(x), ~x\in  \mathfrak{a}_+.$$
The solution $\psi=\psi_t$ is a family of   Weyl-invariant functions on ${\mathfrak a}$ and  $-\frac{\partial \psi}{\partial t}$ is a Ricci potential of metric $\omega_\psi$. By a result of  Perelman (cf. \cite{ST, TZ2, WfZ1}), there is a  constant $c_t$ so that
$$h_t=-\frac{\partial \psi}{\partial t}+c_t$$
 is uniformly bounded. In fact, such $c_t$ can be normalized by   the identity
 \begin{align}\label{normalization}\int_{\mathfrak a_+}e^{h_t}\pi (\nabla\psi_t){\rm Hess}(\nabla^2 \psi_t)dx= \int_{2P_+}  \pi\,dy=V_0.
 \end{align}

For simplicity,  we let $w_t=\psi_t+j$  as a convex function on $\mathfrak a_+$.
  Then  (\ref{reduced-KRflow})  can be rewritten   as a family of MA  type equations
\begin{align}\label{elliptic-family}
 \pi (\nabla\psi_t){\rm Hess}(\nabla^2 \psi_t) = e^{-h_t}e^{-w_t},
  \end{align}
  Note that $j(x)\to\infty$ as $x\to W_\alpha$,  where $W_\alpha=\{x\in \mathfrak a|~<\alpha, x>=0\}$ is a Weyl well associated to positive root $\alpha$.  Thus
there are an $x_t \in \mathfrak a_+$ and a number  $m_t$ such that
\begin{align}\label{m-t}
 \inf_{\mathfrak{a}_+} w_t= w_t(x_t)=m_t.
 \end{align}

As in \cite{Del1},  for any nonnegative integer $k$, we set
$$A_k=\{x\in \mathfrak a_+: ~\ m_t+k\le w_t(x)\le m_t+k+1\}. $$
Then each $U_k=\bigcup_{i=0}^k A_i=\{w_t<m_t+k+1\}$ is a
convex set.
Moreover,  for any $k\ge 0$, $A_k$ is a
bounded set and the minimum $m_t$ is attained at some point in
$A_0$.

Since
$$\nabla (-j)(x_t)= \nabla \psi_t(x_t)\in P_+,$$
$\nabla (-j)(x_t)$ is uniformly bounded. Thus  there is a uniform constant $\delta_0$ such that
\begin{align}\label{xi-condition-1}
 <\alpha, x_t>\ge \delta_0, ~\forall \alpha\in \Phi_+.
\end{align}
This implies that there is a ball $B_{\kappa}(x_t)\subset A_0$ with a fixed radius such that
\begin{align}\label{xi-condition-1-1}
 {\rm dist}(B{\kappa}(x_t), W_\alpha)\ge \delta_0', ~\forall \alpha \in \Phi_+.
\end{align}

Due to \cite{Del1}, we  also have the following estimates for  the equation (\ref{elliptic-family}).

\begin{prop}\label{concentration} The following estimates hold: 1)
\begin{align}\label{xi-condition-2}
  \inf_{\mathfrak{a}_+} (\psi_t+j)= (\psi+j)(x_t)=m_t,  ~|m_t|\le C;
\end{align}

2)  ${\rm diam}(U_k)\le D_k$.
\end{prop}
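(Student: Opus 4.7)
The plan is to combine strict convexity of $w_t=\psi_t+j$ with the polytope constraint $\nabla\psi_t\in 2P_+$, the integral identity
$\int_{\mathfrak a_+} e^{-w_t}\,dx = V_0$, which follows from (\ref{elliptic-family}) and (\ref{normalization}), and Perelman's uniform bound $|h_t|\le C$.

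\emph{Step 1 (location of $x_t$).} Since $j$ blows up at every Weyl wall $W_\alpha$ while $\psi_t$ has gradient in the bounded polytope $2P_+$, the convex function $w_t$ is proper on $\mathfrak a_+$, so its minimum point $x_t$ is interior and unique. At $x_t$ one has
$\nabla\psi_t(x_t)= -\nabla j(x_t)= 2\sum_{\alpha\in\Phi_+}\coth\alpha(x_t)\,\alpha \in 2P_+.$
Since each $\coth\alpha(x_t)\ge 1$ and $2P_+$ is bounded, there is a uniform $\delta_0>0$ with $\alpha(x_t)\ge \delta_0$ for every $\alpha\in\Phi_+$, which is (\ref{xi-condition-1}). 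Hence a ball $B_\kappa(x_t)$ of uniform radius $\kappa$ sits inside $\mathfrak a_+$ with uniform distance to all walls, as in (\ref{xi-condition-1-1}).

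\emph{Step 2 (diameter bound for $U_k$).} Along a ray $x_t+sv$ into the interior of $\mathfrak a_+$, the one-dimensional function $g(s)=w_t(x_t+sv)-m_t$ is convex with $g(0)=g'(0)=0$. The polytope bound $\langle\nabla\psi_t,v\rangle\le h_{2P_+}(v)$ together with the asymptotics $j(x_t+sv)-j(x_t)\sim -2s\sum_{\alpha\in\Phi_+}\alpha(v)$ as $s\to\infty$ gives a linear lower bound for $g(s)$ in directions where $h_{2P_+}(v)-2\sum_{\alpha}\alpha(v)>0$; in critical directions where this fails, the MA identity (\ref{elliptic-family}) together with Perelman's bound $|h_t|\le C$ is used to rule out slope degeneracy and still yields a uniform linear rate. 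Coercivity at the walls is automatic from Step 1. This produces a constant $D_k$ depending only on $k$ (growing polynomially in $k$) so that $U_k\subset B_{D_k}(x_t)$ uniformly in $t$.

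\emph{Step 3 (the bound $|m_t|\le C$).} On $B_\kappa(x_t)\subset A_0$ we have $|\nabla w_t|\le C$ by Step 1 (polytope bound on $\nabla\psi_t$ and boundedness of $\nabla j$ away from the walls), so $w_t\le m_t+C\kappa$ there, and
\begin{align*}
V_0\;=\;\int_{\mathfrak a_+} e^{-w_t}\,dx\;\ge\;\int_{B_\kappa(x_t)} e^{-w_t}\,dx\;\ge\; c\,e^{-m_t},
\end{align*}
yielding $m_t\ge -C$. For the upper bound I split along the layers $A_k$ and apply Step 2:
\begin{align*}
V_0\;=\;\sum_{k\ge 0}\int_{A_k} e^{-w_t}\,dx\;\le\;\sum_{k\ge 0}e^{-m_t-k}|A_k|\;\le\; e^{-m_t}\sum_{k\ge 0}e^{-k}D_k^{\,r}.
\end{align*}
The last sum is finite by the polynomial growth of $D_k$, so $m_t\le C$.

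The main obstacle is Step 2. For a Fano $\mathbf G$-manifold that does not admit a K\"ahler--Ricci soliton, $2\sum_{\alpha\in\Phi_+}\alpha$ need not lie in the interior of $2P_+$, so coercivity of $w_t$ in every direction cannot be extracted from the polytope constraint alone. The essential extra input is the quantitative information carried by the MA equation (\ref{elliptic-family}) together with Perelman's estimate, which is what turns strict convexity of $w_t$ into a $t$-independent linear rate and then, via convexity of the layers $A_k$, into a linear-in-$k$ diameter control.
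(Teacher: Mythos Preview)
Your Step 1 and the lower-bound half of Step 3 are fine and agree with the paper. The problem is that you have reversed the logical order of the two estimates, and this creates a genuine gap in Step 2.

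In the paper, part 1) ($|m_t|\le C$) is established \emph{first}, by importing the arguments of \cite[Lemma 3.2]{WxZ}, \cite[Proposition 4.4]{Del1} and \cite{Zhu1}; only \emph{afterwards} is part 2) deduced from it, together with the equation (\ref{elliptic-family}) and the normalization (\ref{normalization}). Concretely, once $m_t\le C$ is known, the bound $\int_{U_0}e^{-w_t}\,dx\le V_0$ combined with $e^{-w_t}\ge e^{-m_t-1}$ on $U_0$ gives $|U_0|\le eV_0e^{m_t}\le C'$; since $U_0$ is convex and contains the fixed ball $B_\kappa(x_t)$, this volume bound forces ${\rm diam}(U_0)\le D_0$, and then the convexity inclusion $U_k\subset x_t+(k+1)(U_0-x_t)$ yields ${\rm diam}(U_k)\le (k+1)D_0$.

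Your Step 2 attempts to get ${\rm diam}(U_k)\le D_k$ \emph{before} knowing $|m_t|\le C$, and the coercivity argument you sketch fails precisely in the case the paper cares about. The asymptotic slope of $w_t$ in a chamber direction $v$ is at most $h_{2P_+}(v)-4\langle\rho,v\rangle$, which can vanish when $4\rho\notin{\rm int}(2P_+)$ --- exactly the situation where $(M,J)$ admits no KE metric and the flow is expected to degenerate. Your fallback to the MA equation plus Perelman is not a proof: on $U_0$ the equation only gives $\det(\nabla^2\psi_t)\ge c\,e^{-m_t}/\pi(\nabla\psi_t)$, and without an a priori upper bound on $m_t$ this is useless for controlling diameters. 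The circularity you have identified is real and cannot be broken in this order. The upper bound on $m_t$ in the cited references comes from a separate mechanism (in the flow setting, for example, via an upper bound on the K\"ahler potential $\psi_t-\psi_0$; cf.\ \cite[Proposition 3.2]{Zhu1}), not from a prior diameter estimate.
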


\begin{proof}A version of  estimate  (\ref{xi-condition-2}) for $m_t$  has been obtained for a family of MA  type equations raised for the existence problem  of KE metrics via the continuity method on toric manifolds and $\mathbf G$-manifolds  \cite[Lemma 3.2]{WxZ}, \cite[Proposition 4.4]{Del1}, respectively. Thus following the argument for the KR flow on  toric manifolds \cite{Zhu1},  we can also get  the estimate  (\ref{xi-condition-2}). We leave the detailed proof to the reader.

The estimate  2) follows from  (\ref{xi-condition-2}) and  the equation (\ref{elliptic-family}) together with  the  normalization  (\ref{normalization}) via  the volume.

\end{proof}

To prove Theorem \ref{main-theorem},  we  establish  the following explicit  result.

\begin{theo}\label{main-theorem-2}Let $(M, J)$ be a Fano $\mathbf G$-manifold.   Then there are only three cases for the KR flow (\ref{reduced-KRflow}) as follows.

Case 1).  There is a sequence of $t_i$ such that
 $$|x_{t_i}|\le C.$$
 Then (\ref{reduced-KRflow}) converges to a  KE metric in sense of K\"ahler potentials. As a consequence, $(M,J)$
  is a KE-manifold.

Case 2). $|x_t|\to  \infty$ as $t\to \infty$ and there is a sequence of $t_i$ such that
 $$|Pj(x_{t_i})|\le C,$$
 where $Pj$ is the projection from $(\mathfrak a_c,  \mathfrak a)\to  \mathfrak  a$ with the center  $\mathfrak a_c$ of
 $\mathfrak g$.
 Then (\ref{reduced-KRflow}) converges to a  KR soliton in sense of K\"ahler potentials. As a consequence, $(M,J)$  admits a KR soliton.

Case 3).   $|Pj(x_t)|\to  \infty$ as $t\to \infty$. Then there are two subcases:

Case 3.1).
 For any $\alpha\in \Phi_+$, it holds
\begin{align}\label{general-points}<W_\alpha, x_{t}>\to\infty, ~{\rm as}~t\to \infty.
\end{align}
 Then (\ref{reduced-KRflow}) converges  locally smoothly to a KR soliton $\omega_{KS}$  on a horospherical space in Cheeger-Gromov topology,  whose completion is the  GH  limit of KR flow  (\ref{KRF}) with  a structure of  $\mathbb Q$-Fano horospherical   variety $ M_\infty$  as a limit of  $\mathbb C^*$-degeneration of $(M,J)$  induced by an element in the Lie algebra of Cartan torus.   Moreover, $ M_\infty$ is same with    a  limit of  $\mathbb C^*$-degeneration of $(M,J)$ induced by the soliton HVF of  $\omega_{KS}$,  if  $\omega_{KS}$ is not a KE metric.

Case 3.2).   (\ref{general-points}) does not hold.  Then there is  a subset  $\Phi_u$ of $\Phi_+$
such that
\begin{align}\label{case-1-1}<\beta, x_{t}>\to \infty ~{\rm as} ~t\to\infty, ~\forall \beta\in   \Phi_u
\end{align}
and
\begin{align}\label{case-2-1}\delta_0\le<\alpha', x_{t}>\le A, ~\forall \alpha'\in \Phi_+\setminus  \Phi_u.
\end{align}
Moreover,   (\ref{reduced-KRflow}) converges   locally smoothly to a KR soliton  $\omega_{KS}$  on a horosymmetric space in Cheeger-Gromov topology as in Case 3.1):   whose completion is the  GH  limit of KR flow  (\ref{KRF}) with  a structure of  $\mathbb Q$-Fano horosymmetric  variety  $ M_\infty$  as a limit of  $\mathbb C^*$-degeneration  of $(M,J)$  induced by an element in the Lie algebra of Cartan torus;    in  case that  $\omega_{KS}$ is not a KE metric,  $ M_\infty$ is  same with  a  limit of  $\mathbb C^*$-degeneration of $(M,J)$ induced by the soliton HVF of  $\omega_{KS}$.

\end{theo}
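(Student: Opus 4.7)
The plan is to analyze the reduced flow (\ref{reduced-KRflow}) on $\mathfrak a_+$ by recentering at the minimum point $x_t$ of $w_t=\psi_t+j$ via a Cartan torus translation and then passing to a limit of the translated convex functions on a suitably enlarged cone $\mathfrak a_+'$. Throughout I would use Perelman's uniform bound on $h_t=-\partial_t\psi_t+c_t$, the bounds $|m_t|\le C$ and ${\rm diam}(U_k)\le D_k$ from Proposition \ref{concentration}, the normalization (\ref{normalization}), and the $\mathbf K\times\mathbf K$-invariance of the flow.

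For Case 1, if $x_{t_i}$ stays in a fixed compact subset of $\mathfrak a_+$, then each sub-level set $U_k$ is contained in a fixed compact region bounded away from every Weyl wall $W_\alpha$. On this region $j$ and all the weights in (\ref{elliptic-family}) are uniformly smooth, so Caffarelli-type interior estimates for the real Monge-Amp\`ere equation together with Weyl-invariance and the Perelman bound yield uniform $C^k_{\rm loc}$ estimates; passing to a subsequence one obtains a smooth limit $\psi_\infty$ solving (\ref{reduced-KE-equ-g}), which by Lemma \ref{Hessian} extends to a KE metric on $M$. Case 2 reduces to Case 1 after translating by $Pj(x_{t_i})\in\mathfrak a_c$: since the center of $\mathfrak g$ corresponds to $\mathbf G\times\mathbf G$-equivariant automorphisms of $M$, the translated potential still defines a global K\"ahler metric, and the limit gives a genuine KR soliton with soliton HVF in the Lie algebra of the Cartan torus. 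Both cases follow the toric template of \cite{Zhu1} and are essentially automatic from Proposition \ref{concentration}.

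The main work is Case 3, where translating by $x_t$ breaks Weyl-invariance, so the translated potential $\tilde w_t(y)=w_t(y+x_t)-m_t$ no longer arises from a global K\"ahler potential on $\mathbf G$. The key observation is that the weight $J(y+x_t)=\prod_{\alpha\in\Phi_+}\sinh^2\alpha(y+x_t)$ degenerates root-by-root: if $\langle\alpha,x_t\rangle\to\infty$ (the set $\Phi_u$) then $\sinh^2\alpha(y+x_t)\sim\tfrac14 e^{2\alpha(y+x_t)}$ becomes a unipotent exponential factor, while if $\langle\alpha,x_t\rangle$ stays bounded (the set $\Phi_+\setminus\Phi_u$) the $\sinh^2$ survives and keeps the symmetric structure; the natural defining cone therefore enlarges from $\mathfrak a_+$ to $\mathfrak a_+'=\{y:\langle\alpha,y\rangle>0,\,\alpha\in\Phi_+\setminus\Phi_u\}$. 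On each compact $K\subset\mathfrak a_+'$ one obtains uniform $C^0$ bounds for $\tilde w_t$ (from below by convexity and $\tilde w_t(0)=0$, from above by the diameter control of $U_k$ in Proposition \ref{concentration}); Caffarelli's interior regularity for real Monge-Amp\`ere then upgrades these to $C^2$ bounds, and the Perelman bound on $h_t$ plus standard parabolic bootstrap yields $C^\infty_{\rm loc}$ bounds. Passing to a subsequence, $\tilde w_t\to w_\infty$ locally smoothly, with $w_\infty$ satisfying the reduced KR-soliton equation (\ref{reduced-KR-soliton}) (or (\ref{KR-soliton-LL}) in Case 3.2) on $\mathfrak a_+'$; by (\ref{horosymmetry-LL}) this defines a smooth KR soliton $\omega_{KS}$ on the horosymmetric space $\hat{\mathbf G}/\mathbf H$ of Example \ref{ex-degereration}, whose metric completion is the desired $M_\infty$.

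To identify $M_\infty$ with the central fiber of a $\mathbb C^*$-degeneration induced by an element of $\mathfrak t$, I would take $Y$ to be a rational approximation of the limit direction of $x_t/|x_t|$ and apply Delcroix's construction \cite{Del2} together with the explicit description of Li-Li \cite{LL}; the combinatorial data (\ref{lie-subalgebra})--(\ref{parabolic-subg-1}) match exactly the $\Phi_u,\Phi_+\setminus\Phi_u$ decomposition produced by the analytic argument, which pins down the variety structure of $M_\infty$. The coincidence with the $\mathbb C^*$-degeneration induced by the soliton HVF (when $\omega_{KS}$ is not KE) will then follow from the uniqueness of KR solitons \cite{TZ1,TZ2} together with the $H$-invariant minimization characterization of the optimal degeneration \cite{DS,WfZ2,LL}; this identification of two a priori different degenerations is the main conceptual obstacle. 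Finally, the $\mathbb Q$-Fano property of $M_\infty$ is a consequence of the existence of the singular KR soliton via the criteria of \cite{BN,WZZ,HL2}.
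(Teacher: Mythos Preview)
Your outline captures the correct overall architecture, but there are two genuine gaps in Case 3 that the paper's proof works hard to close.

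\textbf{The soliton vector field is not identified.} You assert that the local smooth limit $w_\infty$ satisfies the reduced KR-soliton equation (\ref{reduced-KR-soliton}) or (\ref{KR-soliton-LL}), but you never explain where the linear term $Y(\phi)=\langle Y,\nabla\phi\rangle$ comes from. Perelman's estimate only gives $|h_t|\le A$, so after translation $\hat h_t(\cdot)=h_t(\cdot+x_t)$ subconverges to \emph{some} bounded function $h_\infty$; the limit equation is then (\ref{elliptic-family}) with $h_\infty$ on the right, not a priori a soliton equation. The paper's Lemma \ref{x-vector} is the technical heart here: it first shows $|x_{t+\epsilon_0}-x_t|\le C$ (bounded velocity of the minimum), hence $\frac{d}{dt}y_t$ converges to a fixed element of $\mathfrak t$; then, using the convergence from the HT conjecture and a contradiction argument on oscillations of $\partial_s\hat\psi'$, it proves $h_\infty=\langle Y,\nabla\psi_\infty\rangle+c$ and that $Y$ satisfies (\ref{Y-vectors}). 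Without this step you do not know the limit metric is a KR soliton at all, nor that its soliton HVF lies in $\mathfrak t$.

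\textbf{The $C^2$ estimate is not a Caffarelli step.} The reduced equation (\ref{elliptic-family}) is not a standard real Monge--Amp\`ere equation: the factor $\pi(\nabla\psi_t)=\prod_{\alpha\in\Phi_+}\langle\alpha,\nabla\psi_t\rangle^2$ can degenerate when $\nabla\psi_t$ approaches $\partial P_+$, so Caffarelli's interior regularity does not apply directly. The paper obtains the \emph{upper} Hessian bound ${\rm Hess}(\psi_t)\le C$ by a purely geometric argument (Lemma \ref{gradient-vector} and Proposition \ref{upper-bound-Hessian}): one bounds $|X|_{\omega_{\psi_t}}$ for torus HVFs via the Futaki-type equation $\Delta\theta_X+\theta_X+\langle\partial\theta_X,\partial h\rangle=0$, Perelman's estimates, and the gradient estimate of \cite{TZZZ}. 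Only after this upper bound does the equation yield the lower bound on $\det D^2\psi_t$ and the nondegeneracy $\langle\nabla\psi_t,\alpha\rangle\ge C_k$ (Lemma \ref{gradient-nondegenerate}); the higher regularity then comes from the parabolic equation (\ref{reduced-KRflow}), not elliptic Caffarelli theory.

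Finally, to identify $M_\infty$ with the algebraic $\overline{\mathbf N}$ the paper does not use $H$-invariant minimization directly. It first proves the volume identity ${\rm vol}(\mathbf N,\omega_{KS})={\rm vol}(M,\omega_0)$ and $P_\infty=P_+$ (full mass and fineness of the polytope), then invokes Corollary \ref{extension-KR} from the Appendix to produce a singular KR soliton on $\overline{\mathbf N}$; the identification $\hat M_\infty\cong\overline{\mathbf N}$ is then argued separately in the KE case (via K-semistability and Li--Wang--Xu) and in the non-KE case (via relative modified K-stability for KR solitons). Your sketch should be adjusted to route through these intermediate statements.
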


\begin{rem}\label {theorem2-remark}  1) If $\mathbf G$ is semi-simple, Case 2) will not happen in Theorem \ref{main-theorem-2}.

2) In Case 2), when $(M,J)$ is a toric manifold, the result was proved in \cite[Theorem 1.1]{Zhu1}.

3) Case 3.1) is just Case 3.2) when $\Phi_u=\Phi_+$. The lower bound of (\ref{case-2-1}) comes from (\ref{xi-condition-1}).

4) In  Case 3), $\Phi_u$ will not be empty and so  the curvature along the flow must blow-up  according to the proof of \cite[Lemma 4.4]{LTZ1} (also see \cite[Lemma 6.4]{Zhu3}).  \footnote{In fact,  the flow must blow-up  if there is a $\beta\in   \Phi_+$ and a sequence $\{x_{t_i}\}$ such that  (\ref{case-1-1}) holds.}  In particular,   $(M, J)$ could not admit
 any KR soliton \cite{TZZZ, DS}.  Thus together with the results in  case 1) and 2),    Theorem \ref{main-theorem-2} also implies Theorem \ref{g-flow}.

\end{rem}

Theorem \ref{main-theorem} will be proved in Section 5.  In the rest  of this section,  we prove several fundamental lemmas for  Case 3.2) in the theorem which will be used in next two sections.

\begin{lem}\label{general-case} Suppose that   $|Pj(x_t)|\to  \infty$ as $t\to \infty$  in Case 3.2).   Let
\begin{align}\label{x-sequence}X_t=\frac{x_t}{|x_t|}.
\end{align}
Then  there is a subset  $\Phi_u$ of $\Phi_+$   such that
\begin{align}\label{orthogonal}
&\lim_t<\nabla\log{\rm sinh}(<\beta, x>)(x_{t_i}), X>=<\beta, X>, ~\forall \beta\in   \Phi_u,~X\in \mathfrak a; \notag\\
&<\alpha', X_\infty>=0, ~\forall \alpha' \in \Phi_+\setminus  \Phi_u,
\end{align}
where
$X_\infty$ is   a limit of  any sequence of $X_{t}$.

\end{lem}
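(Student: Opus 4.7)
The plan is to work along a subsequence capturing the asymptotic behavior of the pairings $\langle\alpha, x_t\rangle$ for every $\alpha \in \Phi_+$, then define $\Phi_u$ to be exactly the roots on which these pairings blow up, and verify both assertions by direct limiting arguments.

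Since $|Pj(x_t)| \to \infty$ forces $|x_t| \to \infty$ (because $|x_t| \ge |Pj(x_t)|$), the unit vectors $X_t = x_t/|x_t|$ lie on the compact unit sphere in $\mathfrak a$, so along a subsequence $t_i$ we may assume $X_{t_i} \to X_\infty$. Because $\Phi_+$ is finite, a further diagonal extraction lets us assume that for every $\alpha \in \Phi_+$ the scalar $\langle\alpha, x_{t_i}\rangle$ either tends to $+\infty$ or stays bounded above. The uniform lower bound $\langle\alpha, x_t\rangle \ge \delta_0 > 0$ supplied by (\ref{xi-condition-1}) rules out accumulation at $0$, so the bounded case is pinched between $\delta_0$ and some constant $A$. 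We then set
\[
\Phi_u = \{\beta \in \Phi_+ \colon \langle\beta, x_{t_i}\rangle \to \infty\},
\]
which immediately yields (\ref{case-1-1}) and (\ref{case-2-1}).

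For the second equality in (\ref{orthogonal}), any $\alpha' \in \Phi_+ \setminus \Phi_u$ has $\langle\alpha', x_{t_i}\rangle$ bounded while $|x_{t_i}| \to \infty$, so
\[
\langle\alpha', X_\infty\rangle \,=\, \lim_i \frac{\langle\alpha', x_{t_i}\rangle}{|x_{t_i}|} \,=\, 0.
\]
For the first equality, a direct computation gives $\nabla \log \sinh\langle\beta, \cdot\rangle(x_{t_i}) = \coth(\langle\beta, x_{t_i}\rangle)\,\beta$; for $\beta \in \Phi_u$ the factor $\coth(\langle\beta, x_{t_i}\rangle) \to 1$, and pairing with any $X \in \mathfrak a$ produces the stated limit $\langle\beta, X\rangle$.

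The argument is essentially bookkeeping plus elementary limits, so there is no genuine obstacle; the one substantive input is the uniform lower bound (\ref{xi-condition-1}), which prevents positive roots from approaching zero along the flow and supplies the clean dichotomy used to define $\Phi_u$. Without it, a third alternative $\langle\alpha, x_{t_i}\rangle \to 0$ would let $\coth$ blow up and break the limit in the first assertion; since (\ref{xi-condition-1}) was already derived from $\nabla(-j)(x_t) = \nabla\psi_t(x_t) \in P_+$, this point is handed to us.
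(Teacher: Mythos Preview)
Your argument is correct and mirrors the paper's proof: pass to a subsequence along which each $\langle\alpha, x_{t_i}\rangle$ either diverges or stays in $[\delta_0,A]$ (invoking (\ref{xi-condition-1}) for the lower bound), set $\Phi_u$ to be the diverging roots, and read off the two limits from $\coth\to 1$ and $\langle\alpha',x_{t_i}\rangle/|x_{t_i}|\to 0$. The one remark the paper adds and you omit is that, because only finitely many subsets $\Phi_u\subset\Phi_+$ are possible, the dichotomy (\ref{case-1-1})--(\ref{case-2-1}) actually holds uniformly in $t$, so the orthogonality conclusion applies to \emph{every} subsequential limit $X_\infty$ rather than just the one you extracted.
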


\begin{proof} By  (\ref{xi-condition-1}), it is easy to see that
there are   a subset  $\Phi_u$ of $\Phi_+$ and  a  sequence of  $t_i$ such that (\ref{case-1-1}) and (\ref{case-2-1}) are both satisfied.
 Since the subset  $\Phi_u$ of $\Phi_+$
 can be  finitely possible chosen,  it is easy  to see that   both of  (\ref{case-1-1}) and (\ref{case-2-1}) hold uniformly at $t$.
In  the case of (\ref{case-1-1}),    for any vector $X\in \mathfrak a$, we have
$$\lim_t <\nabla\ln{\rm sinh}(<\beta,x>)(x_{t}), X>=<\beta,X>, ~\forall \beta\in   \Phi_u.$$
In the case of  (\ref{case-2-1}),  we get
$$<\beta, X_\infty>=\lim_t  \frac{1}{|x_{t}|}<\alpha, x_i>=0,$$
since $|x_t|\to\infty$.

\end{proof}

 \begin{lem}\label{regular-uk}Under the assumption in  Lemma \ref{general-case}, we have
\begin{align}\label{delta-t-distance}
 {\rm dist}(U_k, W_\alpha)\ge \delta_k>0 ~\forall \alpha \in \Phi_+.
\end{align}

 \end{lem}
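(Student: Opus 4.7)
The plan is to split $\Phi_+$ as $\Phi_u \sqcup (\Phi_+ \setminus \Phi_u)$ according to Lemma \ref{general-case} and treat each piece separately.

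For $\beta \in \Phi_u$, the estimate is essentially immediate. Since $\langle\beta, x_t\rangle \to \infty$ by (\ref{case-1-1}) while ${\rm diam}(U_k) \leq D_k$ by Proposition \ref{concentration}, every $y \in U_k$ satisfies $\langle\beta, y\rangle \geq \langle\beta, x_t\rangle - |\beta| D_k$, which tends to infinity uniformly in $y$. Hence ${\rm dist}(U_k, W_\beta) \to \infty$, a much stronger statement than required.

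For $\alpha' \in \Phi_+ \setminus \Phi_u$, my strategy is to combine convexity of $\psi_t$ with the logarithmic blow-up of $j = -\log J$ near $W_{\alpha'}$: if $\alpha'(y)$ were very small for some $y \in U_k$, then $j(y)$ would be forced to be very large, contradicting $w_t(y) \leq m_t + k+1$ once one has a lower bound on $\psi_t(y)$. The lower bound on $\psi_t(y)$ comes from the fact that $\nabla\psi_t(x_t) \in 2P_+$ is uniformly bounded, so convexity yields $\psi_t(y) - \psi_t(x_t) \geq -C_0 D_k$ for every $y \in U_k$. To control $j(y) - j(x_t) = \sum_{\alpha \in \Phi_+} 2\log[\sinh\alpha(x_t)/\sinh\alpha(y)]$ I would estimate term by term: for $\beta \in \Phi_u$, $\beta(x_t)$ is large and $|\beta(y) - \beta(x_t)| \leq |\beta| D_k$, so $\sinh\beta(x_t)/\sinh\beta(y) \sim e^{\beta(x_t) - \beta(y)}$ is uniformly bounded; for other $\alpha'' \in \Phi_+ \setminus \Phi_u$, the bound $\alpha''(x_t) \leq A$ from (\ref{case-2-1}) bounds $\sinh\alpha''(x_t)$ above, giving a lower bound on the corresponding term independent of $\alpha''(y) > 0$; and for $\alpha'$ itself, $\log\sinh\alpha'(x_t)$ is bounded by (\ref{case-2-1}), whereas $-\log\sinh\alpha'(y) \to +\infty$ as $\alpha'(y) \to 0$. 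Summing, $j(y) - j(x_t) \geq -2\log\sinh\alpha'(y) - C_1$ with $C_1$ independent of $t$ and $y \in U_k$.

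Plugging these into $[\psi_t(y) - \psi_t(x_t)] + [j(y) - j(x_t)] \leq k+1$ (valid on $U_k$) gives $-2\log\sinh\alpha'(y) \leq (k+1) + C_0 D_k + C_1$, hence a uniform positive lower bound on $\alpha'(y)$ and consequently ${\rm dist}(y, W_{\alpha'}) \geq \delta_k > 0$. The main technical delicacy is ensuring that the unbounded contributions from $\beta \in \Phi_u$ to $j(y)$ and to $j(x_t)$ cancel cleanly to leave a bounded remainder; this uses both that $\beta(x_t)$ is large (so the relevant $\sinh$'s are effectively exponentials) and that $U_k$ has bounded diameter.
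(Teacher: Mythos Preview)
Your approach is essentially the same as the paper's: split $\Phi_+$ into $\Phi_u$ and $\Phi_+\setminus\Phi_u$, dispose of $\Phi_u$ using the diameter bound, and for the remaining roots combine a bound on $\psi_t(y)-\psi_t(x_t)$ with a term-by-term analysis of $j(y)-j(x_t)$. The paper in fact uses the two-sided bound $|\psi_t(y)-\psi_t(x_t)|\le C_1$ (immediate from ${\rm diam}(U_k)\le D_k$ and $\nabla\psi_t\in 2P_+$ uniformly bounded), which is slightly simpler than your one-sided bound from convexity, but the logic is the same.

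One small slip: for $\alpha''\in\Phi_+\setminus\Phi_u$ with $\alpha''\neq\alpha'$, you say that the upper bound $\alpha''(x_t)\le A$ on $\sinh\alpha''(x_t)$ ``gives a lower bound on the corresponding term independent of $\alpha''(y)>0$''. This is backwards: the term $2\log\bigl[\sinh\alpha''(x_t)/\sinh\alpha''(y)\bigr]$ has no lower bound if $\alpha''(y)$ is allowed to be arbitrary. What you actually need is $\sinh\alpha''(x_t)$ bounded \emph{below} (from $\alpha''(x_t)\ge\delta_0$ in (\ref{case-2-1})) and $\sinh\alpha''(y)$ bounded \emph{above}, which follows from the diameter bound $\alpha''(y)\le A+|\alpha''|D_k$. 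With this correction your chain of inequalities goes through and yields exactly the paper's conclusion $\langle\alpha',y\rangle\ge\epsilon_k$.
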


\begin{proof}   By   2) in Proposition \ref{concentration},  we see that
$$|\psi(x)-\psi(x_t)|\le C_1,  ~\forall~x\in U_k.$$
Thus, it suffices to verify  (\ref{delta-t-distance}) for $\alpha \in \Phi_+\setminus  \Phi_u.$
In fact, by (\ref{case-1-1}) and (\ref{case-2-1}),  we have
\begin{align} &|w_k(x)-w_k(x_{t})|\ge -C_1+|j(x)-j(x_{t})|\notag\\
&\ge -C_2+|- \prod_{\alpha\in\Phi_+\setminus  \Phi_u}  \ln{\rm sinh}(<\alpha, x> +  \prod_{\alpha\in\Phi_+\setminus  \Phi_u} \ln{\rm sinh}(<\alpha,x_{t}>|\notag\\
&\ge -C_3+|- \prod_{\alpha\in\Phi_+\setminus  \Phi_u}  \ln{\rm sinh}(<\alpha, x>|.\notag
\end{align}
Then
$$|- \prod_{\alpha\in\Phi_+\setminus  \Phi_u}  \ln{\rm sinh}(<\alpha, x>|\le C_k$$
and so
$$<\alpha, x>\ge \epsilon_k, ~\forall \alpha \in \Phi_+\setminus  \Phi_u, ~x\in U_k.$$
  This implies (\ref{delta-t-distance}).

\end{proof}

\begin{lem}\label{limit-mt}
Let
 $$\rho_u=\frac{1}{2}\sum_{\beta\in \Phi_u} \beta.$$
Then
\begin{align}\label{psi-norm}|\psi_{t}(x_t)-<4\rho_u,x_{t}>|
\le C.
\end{align}
\end{lem}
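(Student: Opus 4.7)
The plan is to exploit the very explicit form of $j(x)$ together with the dichotomy \eqref{case-1-1}–\eqref{case-2-1} that characterizes Case 3.2). From \eqref{xi-condition-2} in Proposition \ref{concentration}, the minimum value satisfies
\[
\psi_t(x_t) + j(x_t) \;=\; m_t, \qquad |m_t|\le C,
\]
so that $\psi_t(x_t) = m_t - j(x_t) = m_t + \log J(x_t)$. Hence the whole lemma reduces to showing
\[
\log J(x_t) \;=\; \langle 4\rho_u, x_t\rangle + O(1).
\]

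Recall $J(x) = \prod_{\alpha\in\Phi_+}\sinh^2\alpha(x)$, so $\log J(x_t) = 2\sum_{\alpha\in\Phi_+}\log\sinh\langle\alpha,x_t\rangle$. The idea is to split this sum according to whether $\alpha\in\Phi_u$ or $\alpha\in\Phi_+\setminus\Phi_u$. For $\alpha'\in\Phi_+\setminus\Phi_u$, the bound $\delta_0\le\langle\alpha',x_t\rangle\le A$ in \eqref{case-2-1} makes $\log\sinh\langle\alpha',x_t\rangle$ uniformly bounded, so this part of the sum contributes $O(1)$. For $\beta\in\Phi_u$, \eqref{case-1-1} gives $\langle\beta,x_t\rangle\to\infty$, and the elementary asymptotic
\[
2\log\sinh s \;=\; 2s - 2\log 2 + O(e^{-2s}) \qquad (s\to\infty)
\]
applied at $s = \langle\beta, x_t\rangle$ yields
\[
\sum_{\beta\in\Phi_u} 2\log\sinh\langle\beta,x_t\rangle \;=\; 2\sum_{\beta\in\Phi_u}\langle\beta,x_t\rangle + O(1) \;=\; \langle 4\rho_u, x_t\rangle + O(1),
\]
where the last equality uses the very definition $\rho_u = \tfrac12\sum_{\beta\in\Phi_u}\beta$.

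Combining the two estimates and adding back $m_t$ (which is bounded by \eqref{xi-condition-2}) gives
\[
\psi_t(x_t) \;=\; \langle 4\rho_u, x_t\rangle + O(1),
\]
which is exactly \eqref{psi-norm}. There is no real obstacle here: once Case 3.2) has been set up, the lemma is a direct asymptotic expansion of $\log J$ at the concentration point, with the $\Phi_u$-directions generating the linear growth $\langle 4\rho_u,\cdot\rangle$ and the complementary root directions contributing only bounded remainders thanks to \eqref{case-2-1}. The only care to take is to verify that the uniformity in $t$ of \eqref{case-1-1}–\eqref{case-2-1}, already noted in the proof of Lemma \ref{general-case}, is enough to absorb the $O(e^{-2\langle\beta,x_t\rangle})$ remainders into a constant independent of $t$.
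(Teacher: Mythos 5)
Your proposal is correct and is essentially the paper's own argument: the paper simply asserts that \eqref{case-1-1} and \eqref{case-2-1} give $|j(x_t)+\langle 4\rho_u,x_t\rangle|\le C$ and then invokes \eqref{xi-condition-2}, and your expansion of $\log\sinh$ over the two root subsets is exactly the computation behind that assertion.
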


\begin{proof}By  (\ref{case-1-1}) and  (\ref{case-2-1}), it is easy to see that
$$|j(x_{t})+<4\rho_u,x_{t}>|\le C.$$
Then by (\ref{xi-condition-2}), we get (\ref{psi-norm}).

\end{proof}

\begin{lem}\label{limit-J} Let
$$\hat J'(x)=\lim_{t}\prod_ {\alpha'\in \Phi_+\setminus\Phi_u} {\rm sinh}^2(<\alpha', x+x_{t}>).$$
Then  there is a vector $\mathbf a_0\in \mathfrak a_+$ such  that
\begin{align}\label{a-existence}\lim_t {\rm sinh}(<\alpha', x_{t}>)  = {\rm sinh}(<\alpha', \mathbf a_0>), ~\forall ~\alpha'\in \Phi_+\setminus\Phi_u.
\end{align}
As a consequence,
\begin{align}\label{limit-j}\hat J'(x)=\prod_ {\alpha'\in \Phi_+\setminus\Phi_u} {\rm sinh}^2(<\alpha', x+\mathbf a_0>).
\end{align}

\end{lem}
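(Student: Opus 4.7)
The plan is to extract a subsequence along which all the bounded pairings $\langle \alpha', x_t\rangle$ converge, realize the limits geometrically as pairings with a particular solution $\mathbf a_0^{(0)}$ in a complementary subspace, and then use the freedom in the annihilator $V_1 = \bigcap_{\alpha' \in \Phi_+\setminus \Phi_u}\ker \alpha'$ to slide $\mathbf a_0^{(0)}$ into the open Weyl chamber $\mathfrak a_+$. Equation (\ref{limit-j}) will then follow by continuity of $\sinh$ applied to the finite product.

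First I would invoke (\ref{case-2-1}): the scalars $\langle \alpha', x_t\rangle$ lie in $[\delta_0, A]$ for every $\alpha' \in \Phi_+ \setminus \Phi_u$, so by a diagonal extraction (still denoted $x_t$) the limit $c_{\alpha'} := \lim_t \langle \alpha', x_t\rangle \in [\delta_0, A]$ exists for each such $\alpha'$. Let $V_2 = V_1^{\perp}$ with respect to the Killing form and decompose $x_t = v_1(t) + v_2(t)$ with $v_i(t) \in V_i$. Since the restrictions $\alpha'|_{V_2}$ ($\alpha' \in \Phi_+ \setminus \Phi_u$) span $V_2^*$ by the very definition of $V_1$, and $\langle \alpha', v_2(t)\rangle = \langle \alpha', x_t\rangle$ is bounded, the sequence $v_2(t)$ is bounded in $V_2$ and, after a further subsequence, converges to some $\mathbf a_0^{(0)} \in V_2$ with $\langle \alpha', \mathbf a_0^{(0)}\rangle = c_{\alpha'}$.

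Next I would adjust $\mathbf a_0^{(0)}$ by an element of $V_1$ to force the $\Phi_u$-pairings to be positive. Because adding any $\mathbf b \in V_1$ leaves $\langle \alpha', \cdot\rangle$ unchanged for $\alpha' \in \Phi_+ \setminus \Phi_u$, only the conditions $\langle \beta, \mathbf a_0\rangle > 0$ for $\beta \in \Phi_u$ remain. The key observation is that the complementary component $v_1(t) \in V_1$ satisfies
\[
\langle \beta, v_1(t)\rangle \;=\; \langle \beta, x_t\rangle - \langle \beta, v_2(t)\rangle \;\longrightarrow\; \infty
\]
for every $\beta \in \Phi_u$, by (\ref{case-1-1}) and the boundedness of $v_2(t)$. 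Fixing a sufficiently large $t_0$ along the subsequence and setting $\mathbf a_0 := \mathbf a_0^{(0)} + v_1(t_0) \in V_2 + V_1 = \mathfrak a$, we retain $\langle \alpha', \mathbf a_0\rangle = c_{\alpha'} \geq \delta_0 > 0$ for $\alpha' \in \Phi_+\setminus \Phi_u$, while $\langle \beta, \mathbf a_0\rangle = \langle \beta, \mathbf a_0^{(0)}\rangle + \langle \beta, v_1(t_0)\rangle > 0$ for every $\beta \in \Phi_u$ provided $t_0$ is large enough. Hence $\mathbf a_0 \in \mathfrak a_+$. Injectivity and continuity of $\sinh$ then give (\ref{a-existence}), and applying it termwise in the finite product together with $\langle \alpha', x + x_t\rangle \to \langle \alpha', x + \mathbf a_0\rangle$ yields (\ref{limit-j}).

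The main obstacle is arranging $\mathbf a_0 \in \mathfrak a_+$: the naive particular solution $\mathbf a_0^{(0)} \in V_2$ has no a priori control of its pairing with roots in $\Phi_u$, and the limiting unit direction $X_\infty$ supplied by Lemma \ref{general-case} only guarantees nonnegative pairings with $\Phi_u$. The trick is to use the actual divergent piece $v_1(t_0) \in V_1$ of $x_t$ itself — which inherits strict positivity on every $\beta \in \Phi_u$ from the fact that $x_t$ lies in $\mathfrak a_+$ and $\langle \beta, v_2(t)\rangle$ stays bounded — as the shift; this simultaneously resolves the positivity conditions for all $\beta \in \Phi_u$ without disturbing the pairings with $\Phi_+ \setminus \Phi_u$.
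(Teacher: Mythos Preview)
Your argument is correct and rests on the same linear-algebraic idea as the paper's: solve the underdetermined system $\langle\alpha',\mathbf a_0\rangle=c_{\alpha'}$ for $\alpha'\in\Phi_+\setminus\Phi_u$, then use the freedom in the solution set (an affine translate of $V_1=\bigcap_{\alpha'}\ker\alpha'$) to land in $\mathfrak a_+$. The paper parametrizes this solution set by choosing the simple roots $\{\alpha_1,\dots,\alpha_{r_0}\}\subset\Phi_+\setminus\Phi_u$ and intersecting the hyperplanes $M_{\alpha_i}=\{\langle\alpha_i,\cdot\rangle=\hat\delta_{\alpha_i}\}$, relying on the fact that every $\alpha'\in\Phi_+\setminus\Phi_u$ is a nonnegative integer combination of these simple roots; you instead use the orthogonal splitting $\mathfrak a=V_1\oplus V_2$ and the boundedness of the $V_2$-projection of $x_t$. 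These are equivalent descriptions of the same affine subspace. Your treatment is in fact more complete on one point: the paper simply asserts the existence of $\mathbf a_0$ in the hyperplane intersection without explaining why it can be taken in the open chamber $\mathfrak a_+$, whereas your device of shifting $\mathbf a_0^{(0)}$ by the divergent $V_1$-component $v_1(t_0)$ of $x_t$ itself supplies exactly this missing justification.
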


\begin{proof} Choose    a set of simple roots in $\Phi_+\setminus \Phi_u$ by $\{\alpha_{1},..., \alpha_{r_0}\}$.  Then by (\ref{case-2-1}),  there are numbers $\delta_{\alpha'}$ such that
$$\lim_i {\rm sinh}(<\alpha', x_{t}>)  =\delta_{\alpha'}, ~\forall~ \alpha'\in \{\alpha_{1},..., \alpha_{r_0}\}.$$
Let
$M_{\alpha'}$  be a hyperplane by
$$M_{\alpha'}=\{x\in \mathfrak a|~<\alpha', x>=\hat \delta_{\alpha'}\}.$$
Thus there is a vector  in  $\mathbf a_0\in \cap_{\alpha'\in \{\alpha_{1},..., \alpha_{r_0}\}}M_{\alpha'}$ such that
$$ {\rm sinh}(<\alpha', \mathbf a_0>)  =\delta_{\alpha'}, ~\forall~ \alpha'\in \{\alpha_{1},..., \alpha_{r_0}\}.$$
This implies that 
$$ {\rm sinh}(<\alpha', \mathbf a_0>)  =\delta_{\alpha'}, ~\forall~ \alpha' \in \Phi_+\setminus\Phi_u.
$$
and so (\ref{a-existence}) and  (\ref{limit-j}) hold.

\end{proof}

\section{Local $C^2$-estimate}

In this section,  we first give  an upper bound estimate  for  Hessian  of $\psi$ through the norm  estimate of  torus vector fields on $M$.   We prove the following  general result.

\begin{lem}\label{gradient-vector}Let $M$ be a Fano manifold and $\omega_g\in
2\pi c_1(M)$ be a K\"ahler metric. Suppose that
\begin{align}\label{assumption-geom} & |R_g|\le A_0, ~C_s (\omega_g)\le \Lambda_0,\notag\\
&{\rm vol}(B_r(x, \omega_g))\ge c_0r^{2n},~\forall ~r<1,~x\in M,
\end{align}
where $R_g$ is the scalar Ricci curvature of $\omega_g$, $ C_s(\omega_g)$ is the Sobolev constant of $\omega_g$ and $A_0, \Lambda_0, c_0$ are uniform positive constants.
Then for any Hamitonian  HVF $X$ with  real-valued potential $\theta_X$ it holds
\begin{align}\label{f-grad-infty} |\nabla\theta_X|_g= |X|_{g}\le C(A_0, C_s, c_0). \end{align}.

\end{lem}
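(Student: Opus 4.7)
The plan is to combine the classical eigenvalue-type equation for the potential $\theta_X$ with Moser iteration, using the three geometric bounds as input. On a Fano manifold with $\omega_g \in 2\pi c_1(M)$, let $h_g$ denote the Ricci potential defined by $\sqrt{-1}\partial\bar\partial h_g = \mathrm{Ric}(\omega_g)-\omega_g$ with normalization $\int_M e^{h_g}\omega_g^n = V$. For any Hamiltonian HVF $X$ with real potential $\theta_X$, a direct calculation starting from $i_X\omega_g = \sqrt{-1}\,\bar\partial\theta_X$ and using the Ricci identity yields
$$-\Delta_g \theta_X + \theta_X = X(h_g) + c,$$
for some constant $c$, which is the driving PDE for the estimate.

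The first step is to control $h_g$ in $C^1$. Since $\Delta_g h_g = R_g - n$ is bounded by hypothesis and the Sobolev constant, non-collapsing volume, and scalar curvature bounds permit Moser iteration, one obtains $\|h_g\|_{L^\infty} \le C(A_0,\Lambda_0,c_0)$. Applying the Bochner formula to $h_g$ together with $\mathrm{Ric}(\omega_g) = \omega_g + \sqrt{-1}\partial\bar\partial h_g$ produces an elliptic inequality for $|\nabla h_g|_g^2$ whose coefficients and right-hand side are controlled by the previous step, and a second Moser iteration yields $\|\nabla h_g\|_{L^\infty} \le C$.

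Next I would bound $\theta_X$ in $L^\infty$. Testing the eigenvalue equation against $\theta_X$ and integrating by parts produces an $L^2$ estimate relating $\|\theta_X\|_{L^2}$ to $\|X\|_{L^2}$, with the mixed term $\int_M \theta_X\, X(h_g)\,\omega_g^n$ controlled by the $L^\infty$ bound on $|\nabla h_g|$. Moser iteration applied to the same equation, whose right-hand side is linear in $|X|_g$ with coefficients already under control, upgrades this to a pointwise bound $\|\theta_X\|_{L^\infty}\le C$.

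Finally, for the gradient bound I would apply the Bochner formula to $|X|^2_g$:
$$\Delta_g|X|_g^2 = |\nabla X|_g^2 + \mathrm{Ric}(X,\bar X) + X(\Delta_g\bar\theta_X) + \bar X(\Delta_g\theta_X),$$
then rewrite $\mathrm{Ric}(X,\bar X) = |X|_g^2 + (\sqrt{-1}\partial\bar\partial h_g)(X,\bar X)$ and substitute the eigenvalue equation to eliminate the last two terms. This produces an elliptic differential inequality for $|X|_g^2$ with right-hand side controlled by the already bounded quantities, to which Moser iteration applies. The main obstacle is the Hessian term $(\sqrt{-1}\partial\bar\partial h_g)(X,\bar X)$, which is not pointwise bounded by the hypotheses; this is handled inside each iteration step by integrating by parts against the test function $|X|^{2(p-1)}$, transferring derivatives from $h_g$ onto $\nabla|X|_g^2$ and $X$, where they are absorbed by the previously established $L^\infty$ bounds on $|\nabla h_g|$ and $\theta_X$.
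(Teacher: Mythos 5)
Your overall strategy is the same as the paper's: both proofs run on the equation $\Delta_g\theta_X+\theta_X+\langle\partial\theta_X,\partial h\rangle_g=0$, first control $h$ and $|\nabla h|_g$, then control $\theta_X$ in $L^\infty$, and finally extract the gradient bound. But there is a genuine gap in your third step. Testing the eigenvalue equation against $\theta_X$ gives $\|\theta_X\|_{L^2}^2=\|\nabla\theta_X\|_{L^2}^2-\int_M\theta_X\,X(h)\,\omega_g^n$, which only \emph{relates} $\|\theta_X\|_{L^2}$ to $\|\nabla\theta_X\|_{L^2}=\|X\|_{L^2}$ --- and the latter is precisely the quantity you are trying to estimate, with no a priori bound available. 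The argument is circular and does not produce a starting $L^2$ (hence $L^\infty$) bound for $\theta_X$. The paper does not derive this bound from the PDE at all: it invokes the standard fact (cited to \cite{Zhu4, LTZ1}) that, once $\theta_X$ is normalized so that the eigenvalue equation holds, its range over $M$ is independent of the metric in $2\pi c_1(M)$ (since $\theta_{X,\varphi}=\theta_X+X(\varphi)$ and $\max_M$, $\min_M$ of the normalized potential are invariants of the $\mathbb{C}^*$-action). You need this, or an equivalent metric-independent normalization argument, before any iteration can begin.

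On the remaining steps the difference is one of packaging rather than substance, though you underestimate the difficulty in one place. For $|\nabla h|_g$, the Bochner formula produces the term $\langle\nabla h,\nabla\Delta h\rangle=\langle\nabla h,\nabla R_g\rangle$, which is not pointwise controlled by the hypotheses (only $R_g$ is bounded, not $\nabla R_g$), as well as a term quadratic in $\partial\bar\partial h$; these require exactly the integrate-by-parts-inside-Moser device you describe only for the final step. That machinery is what the paper outsources to the gradient estimate \cite[Theorem 7.1]{TZZZ}, applied first to $h$ and then, crucially, to $F=e^{h}\theta_X$: the conjugation by $e^{h}$ cancels the drift term $\langle\partial\theta_X,\partial h\rangle$ and turns the problem into $\Delta_gF=-F+e^{h}\theta_X(R_g-n+|\nabla h|_g^2)$ with bounded right-hand side and bounded Dirichlet energy, so a single application of the black-box gradient estimate finishes the proof. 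Your direct Bochner--Moser attack on $|X|_g^2$ can be made to work, but it amounts to reproving that estimate by hand; if you repair the $\|\theta_X\|_{L^\infty}$ step as above and carry out the integration by parts for the $\nabla R_g$ and $\partial\bar\partial h$ terms in both iterations, the argument closes.
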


\begin{proof}
Note that the diameter of $(M,\omega_g)$ is uniformly bounded by the last condition in (\ref{assumption-geom}) and the fact $\omega_g\in 2\pi c_1(M)$. Then by a result of Cheng, the Green function is uniformly bounded below.  Since
the Ricci potential $h$ of $\omega_g$ satisfies
\begin{align}\label{h-equation}\Delta_g h=R_g -n,
\end{align}
by the normalization
\begin{align} \int_M {h}\omega_g^n=0, \notag
\end{align}
and the Green formula, we see that
$h$ is uniformly bounded.
On the other hand, by (\ref{h-equation}), it is easy to see that
$\int_M |\nabla {h}|^2\omega_g^n$ is uniformly bounded.
Thus, by the gradient estimate in \cite[Theorem 7.1]{TZZZ}, there is
a uniform constant $C_1$ such that
\begin{align}\label{grad-h}|\nabla h|_g\le C_1.
\end{align}

  By adding a constant,  $\theta_X$ satisfies the following  equation (cf. \cite{Fu, TZ2}),
\begin{align}\label{fi-equation}\Delta_g \theta_X +\theta_X +\langle\partial \theta_X, \partial h\rangle_g=0.
\end{align}
Note that $\theta_X$ is uniformly bounded (cf. \cite{Zhu4,  LTZ1}).
Let $F=e^{h} \theta_X$. Then by (\ref{fi-equation}), we have
\begin{align}\Delta_g F &= -F + \theta_X \Delta_g e^{h}\\
&=-F+ e^{h}\theta_X(R_g-n+|\nabla h|_{g}^2).\notag
\end{align}
Since $e^{h}\theta_X(R-n+|\nabla h|_{g}^2)$ is uniformly bounded by (\ref{grad-h}),
$\int_M |\nabla {F}|^2\omega_g^n$ is uniformly bounded.
Thus, as in the proof of (\ref{grad-h}), we can apply \cite[Theorem 7.1]{TZZZ}  again to get the  gradient estimate,
$$|\nabla F|_{g}\le C.$$
As a consequence, $ |\nabla \theta_X|_g$ is uniformly bounded by (\ref{grad-h}).
The lemma is proved.

\end{proof}

We apply Lemma \ref{gradient-vector} to the metric $\omega_{\psi_t}\,=\,\sqrt{-1}\partial\bar\partial \phi_{\psi_t}$ in RKR flow (\ref{reduced-KRflow}) on a  Fano $G$-manifold.  Note that by the  Perelman's estimate  (cf. \cite{ST, TZ3, WfZ1}),  the conditions in  (\ref{assumption-geom}) are all satisfied for $\omega_{\psi_t}$. Thus Lemma \ref{gradient-vector} holds for these K\"ahler metrics.

Let $\{e_1, ..., e_r\}$ be $r$ vectors fields generalized by  the  basis $\{E^0_1,...,E^0_r\}$ of $\mathfrak t$. Then  on $T^{\mathbb C}$,
$$e_A=\frac{\partial}{\partial x_A}+\sqrt{-1} J\frac{\partial}{\partial x_A}=\frac{\partial}{
\partial z^A}, ~A=1,...,r.$$
It follows that
\begin{align}\label{e-norm}|e_A|_{\omega_{\psi_t}}^2= \omega_{\psi_t}(\frac{\partial}{
\partial z^A},\overline{\frac{\partial}{
 \partial z^A}})=\psi_{t,AA},  ~A=1,...,r,
 \end{align}

\begin{prop}\label{upper-bound-Hessian}

$${\rm Hess}(\nabla^2 \psi_t) \le C.$$

\end{prop}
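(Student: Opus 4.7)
The plan is to deduce the upper bound on $\mathrm{Hess}_{\mathbb{R}}(\psi_t)$ by applying Lemma \ref{gradient-vector} to the Hamiltonian holomorphic vector fields generated by the Cartan torus action, and then reading off the real Hessian from the block-diagonal form of the complex Hessian in Lemma \ref{Hessian}. The observation driving the argument is that equation (\ref{e-norm}) is not specific to the basis vectors $E^0_A$: it extends linearly to give the full quadratic form of the real Hessian.

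First, I would check that the KR flow metrics $\omega_{\psi_t}$ satisfy (\ref{assumption-geom}) uniformly in $t$. Along (\ref{KRF}), Perelman's work gives a uniform upper bound on the scalar curvature and on the Sobolev constant, while Q.~Zhang's non-inflation result together with Perelman's non-collapsing produces the volume lower bound on balls of radius $r<1$. Thus the hypotheses of Lemma \ref{gradient-vector} hold uniformly along the flow.

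Next, for each $v=\sum_A v^A E^0_A \in \mathfrak{a}$, consider the holomorphic vector field
\begin{equation*}
e_v=\sum_A v^A \frac{\partial}{\partial z^A}
\end{equation*}
on $\mathbf{T}^{\mathbb{C}}$, extended to all of $M$ via the $\mathbf{T}^{\mathbb{C}}$-action. Because $\omega_{\psi_t}\in 2\pi c_1(M)$, this $e_v$ is a Hamiltonian HVF, and it admits a real-valued potential $\theta_{e_v}$ (this uses that $v\in\mathfrak{a}\subset\sqrt{-1}\,\mathfrak{k}$ corresponds to a real element under which the $\mathbf{K}\times\mathbf{K}$-invariant metric is invariant). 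By Lemma \ref{gradient-vector} applied to each $e_v$,
\begin{equation*}
|e_v|_{\omega_{\psi_t}}^2 \le C_1 |v|^2
\end{equation*}
uniformly in $t$ and in $v$. On the other hand, linearizing (\ref{e-norm}) in $v$ and invoking the toric block of (\ref{hess}) in Lemma \ref{Hessian}, for every $x\in\mathfrak{a}_+$,
\begin{equation*}
|e_v|_{\omega_{\psi_t}}^2(\exp(x)) \;=\; v^{T}\,\mathrm{Hess}_{\mathbb{C}}(\phi_{\psi_t})(\exp(x))\,v \;=\; \tfrac{1}{4}\,v^{T}\,\mathrm{Hess}_{\mathbb{R}}(\psi_t)(x)\,v.
\end{equation*}
Combining these two inequalities yields $v^{T}\mathrm{Hess}_{\mathbb{R}}(\psi_t)(x)\,v\le 4C_1|v|^2$ for all $v\in\mathfrak{a}$ and all $x\in\mathfrak{a}_+$, which is the desired uniform upper bound on the real Hessian.

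The main technical input, and the only real obstacle, is the uniform applicability of Lemma \ref{gradient-vector} along the flow; this reduces to the standard package of Perelman's estimates (scalar curvature, diameter, non-collapsing) plus the Sobolev bound, all of which are cited in the excerpt. Once this is granted, the remaining content is the linear-algebraic identification of $|e_v|_{\omega_{\psi_t}}^2$ with the toric quadratic form of $\mathrm{Hess}_{\mathbb{C}}(\phi_{\psi_t})$, which is exactly what Lemma \ref{Hessian} provides.
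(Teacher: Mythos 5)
Your proposal is correct and follows essentially the same route as the paper: both apply Lemma \ref{gradient-vector} to the Hamiltonian vector fields $\sum_A v^A \partial/\partial z^A$ generated by the Cartan torus and read off the quadratic form of $\mathrm{Hess}_{\mathbb{R}}(\psi_t)$ from the toric block of Lemma \ref{Hessian} via (\ref{e-norm}). The only differences are cosmetic (your explicit factor $1/4$ and the explicit homogeneity of the bound in $v$, which the paper leaves implicit).
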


\begin{proof} Let
$$X=\sum_A  a_A e_A$$
be some real numbers $a_A$, $A=1,...,r$.
Then by Lemma \ref{gradient-vector},
$$|X|_{\omega_{\psi_t}}^2\le C.$$
Namely by (\ref{e-norm}),
$$\sum a_Aa_B\psi_{t,AB}\le C.$$
Thus the proposition is true since $a_A$ can be chosen arbitrary.

\end{proof}

 \begin{prop}\label{local-uniform}Let
 $$\hat\psi_t=\psi_t(\cdot+x_t)-\psi_t(x_t).$$
 Then $\hat\psi_t$ is uniformly $C^{l,\alpha}$-bounded  on each $U_k$.

\end{prop}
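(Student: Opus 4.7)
The plan is to first establish a uniform $C^{1,1}$ bound for $\hat\psi_t$ on the translated region $U_k - x_t$, and then invoke standard regularity theory for the Monge--Amp\`ere equation to bootstrap to $C^{l,\alpha}$. The key observation is that the reduced KR flow equation \eqref{elliptic-family} can be rewritten as
\begin{equation*}
\det \mathrm{Hess}(\psi_t) = \frac{e^{-h_t}\, e^{-w_t}}{\pi(\nabla \psi_t)},
\end{equation*}
so that two-sided bounds on the right-hand side, together with the upper eigenvalue bound from Proposition~\ref{upper-bound-Hessian}, will yield a full $C^{1,1}$ estimate on $\psi_t$ on $U_k$.

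To set up the estimate, I first note that on $U_k$ we have $m_t \le w_t \le m_t + k + 1$ with $|m_t| \le C$ by Proposition~\ref{concentration}, so $w_t$ is uniformly bounded. Perelman's estimate gives $|h_t| \le C$. The gradient $\nabla \psi_t$ takes values in the moment polytope $2P_+$, which is bounded, so $\pi(\nabla \psi_t) \le C$. Combining these ingredients with the equation, I obtain $\det \mathrm{Hess}(\psi_t) \ge c_k > 0$ on $U_k$. Proposition~\ref{upper-bound-Hessian} supplies the complementary upper bound on the eigenvalues of $\mathrm{Hess}(\psi_t)$, and a lower bound on each eigenvalue then follows from the elementary fact that $\lambda_1\cdots\lambda_r \ge c_k$ combined with $\lambda_i \le C$ forces $\lambda_i \ge c_k/C^{r-1}$. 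In particular this also gives $\pi(\nabla\psi_t) \ge c > 0$, so $\nabla\psi_t$ stays uniformly away from the walls of $P_+$. Translating by $x_t$, the function $\hat\psi_t$ satisfies $\hat\psi_t(0)=0$, has uniformly bounded gradient (since $\nabla\psi_t \in 2P_+$), and its Hessian has eigenvalues pinched between two positive constants on $U_k - x_t$. Because $B_\kappa(x_t)\subset A_0\subset U_k$ by \eqref{xi-condition-1-1} and $U_k$ has bounded diameter by Proposition~\ref{concentration}, the domain $U_k - x_t$ contains a fixed ball around the origin and is contained in a fixed larger ball, giving a uniform $C^{1,1}$ bound on $\hat\psi_t$.

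To upgrade to $C^{l,\alpha}$, I rewrite the equation for $\hat\psi_t$ as
\begin{equation*}
\det \mathrm{Hess}(\hat\psi_t)(x) = \frac{e^{-h_t(\exp(x+x_t))}\,e^{-w_t(x+x_t)}}{\pi(\nabla\hat\psi_t(x))},
\end{equation*}
which, thanks to the uniform pinching of the Hessian, is a uniformly elliptic Monge--Amp\`ere equation on any fixed interior subdomain of $U_k - x_t$. Using Lemma~\ref{regular-uk} to keep a positive distance to the Weyl walls (where $j$ is smooth), the right-hand side is in $C^{0,\alpha}$ uniformly, so Caffarelli's interior $C^{2,\alpha}$ estimate and Evans--Krylov apply. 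A standard Schauder bootstrap, using the uniform $C^{k,\alpha}$ bounds on $h_t$ along the KR flow (after $t\ge 1$ via the smoothing estimates of Perelman--Sesum--Tian), yields uniform $C^{l,\alpha}$ bounds on each compact subset; replacing $U_k$ by $U_{k+1}$ in the argument gives the stated estimate on $U_k$ itself. The main subtle point is ensuring the two-sided pinching of $\mathrm{Hess}(\psi_t)$, since the natural upper bound from Proposition~\ref{upper-bound-Hessian} only becomes useful once combined with the equation to extract a matching lower bound via the determinant; this hinges on $w_t$, $h_t$, and $\pi(\nabla\psi_t)$ all being uniformly controlled precisely on the sublevel sets $U_k$, and is the heart of the argument.
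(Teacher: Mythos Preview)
Your proof is essentially the same as the paper's: you obtain the lower bound on $\det\mathrm{Hess}(\psi_t)$ from equation~\eqref{elliptic-family} together with the bounds on $w_t$, $h_t$, and $\pi(\nabla\psi_t)$ on $U_k$, combine it with the upper Hessian bound from Proposition~\ref{upper-bound-Hessian} to get two-sided pinching, and then appeal to regularity theory; the paper phrases the last step simply as ``regularity of the parabolic MA equation \eqref{reduced-KRflow}'' together with Proposition~\ref{concentration}(2) and Lemma~\ref{regular-uk}.

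One small caveat: your bootstrap to higher $C^{l,\alpha}$ invokes ``uniform $C^{k,\alpha}$ bounds on $h_t$ via the smoothing estimates of Perelman--Sesum--Tian,'' but those results only give uniform control of $|h_t|$, $|\nabla h_t|_{g_t}$, and the scalar curvature, not arbitrary $C^{k,\alpha}$ bounds on $h_t$. The clean way around this, which is what the paper does implicitly, is to use parabolic regularity for \eqref{reduced-KRflow} directly: once the Hessian is pinched the equation is uniformly parabolic, and Krylov--Safonov/Evans--Krylov followed by Schauder in space-time yields all higher estimates without needing a priori control on derivatives of $h_t$. Your elliptic route does get you to $C^{2,\alpha}$ (since $h_t$ is Lipschitz in the flat coordinates by Perelman's gradient estimate and the Hessian pinching), after which the parabolic bootstrap takes over anyway.
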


\begin{proof}
 Since $<\nabla\psi_t, \alpha>$ is uniformly bounded,
by (\ref{elliptic-family}) and 1) in Proposition  \ref{concentration}, we have
 \begin{align}\label{lower-bound-hessian-1}
 {\rm det}(\nabla^2 \psi_t)  \ge  B_k, ~{\rm on}~ U_k
 \end{align}
 Then by Proposition \ref{upper-bound-Hessian}, we get
$$B_k'\le {\rm Hess}(\nabla^2 \psi^i) \le C, ~{\rm on}~ U_k$$
Thus, by the regularity of  parabolic MA equation (\ref{reduced-KRflow})
   together with 2) in Proposition \ref{concentration} and Lemma  \ref{regular-uk},  we prove that
 $\psi_t(\cdot)$ is locally uniformly $C^{l,\alpha}$-bounded.
\end{proof}

 \begin{lem}\label{gradient-nondegenerate}On each $U_k$ it holds
\begin{align}\label{nabla-psi}<\nabla\psi_t, \alpha>\ge  C_k.
\end{align}

 \end{lem}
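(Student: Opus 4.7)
The plan is to read off the lower bound for each $(\alpha,\nabla\psi_t)$ from the reduced Monge--Amp\`ere equation \eqref{elliptic-family} together with the already-established upper Hessian bound and the $C^1$-estimate for $\hat\psi_t$ on $U_k$.

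First, I would rewrite \eqref{elliptic-family} as
\begin{equation*}
\prod_{\alpha\in\Phi_+}(\alpha,\nabla\psi_t)^2\,\det(\nabla^2\psi_t)\,=\,e^{-h_t-w_t}.
\end{equation*}
On $U_k$ the exponent is uniformly bounded: $h_t$ is Perelman-bounded, and from the definition of $U_k$ together with Proposition \ref{concentration}(1) we have $m_t\le w_t\le m_t+k+1$ with $|m_t|\le C$, so the right-hand side is bounded below by some $c_k>0$ on $U_k$.

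Next, I would invoke Proposition \ref{upper-bound-Hessian}, which gives $\mathrm{Hess}(\nabla^2\psi_t)\le C$ and hence $\det(\nabla^2\psi_t)\le C^r$ everywhere on $\mathfrak{a}_+$. Plugging this into the displayed equation yields
\begin{equation*}
\prod_{\alpha\in\Phi_+}(\alpha,\nabla\psi_t(x))^2\,\ge\,c_k/C^r=:c_k'>0,\qquad x\in U_k.
\end{equation*}
Finally, each factor $(\alpha,\nabla\psi_t(x))$ is uniformly bounded above on $U_k$: from Proposition \ref{local-uniform} the translated potential $\hat\psi_t=\psi_t(\cdot+x_t)-\psi_t(x_t)$ is uniformly $C^{l,\alpha}$-bounded on $U_k-x_t$, so $|\nabla\psi_t|\le M_k$ on $U_k$ and hence $(\alpha,\nabla\psi_t)\le|\alpha|M_k$. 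Combining this upper bound on the individual factors with the lower bound on the product forces every single factor to satisfy
\begin{equation*}
(\alpha,\nabla\psi_t(x))\,\ge\,C_k\,>\,0,\qquad x\in U_k,\ \alpha\in\Phi_+,
\end{equation*}
which is the claim.

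I do not expect a genuine obstacle here: the lemma is essentially a bookkeeping combination of (i) the MA identity, (ii) the upper Hessian bound from Lemma \ref{gradient-vector}--Proposition \ref{upper-bound-Hessian}, and (iii) the $C^1$-compactness of the translated potentials on $U_k$. The only point that requires a moment's care is confirming that both $w_t$ and $h_t$ are bounded on $U_k$ with constants depending only on $k$, which follows from Proposition \ref{concentration} and Perelman's estimate, respectively; once this is in place the argument is just pigeon-holing the lower bound on the product among the finitely many factors.
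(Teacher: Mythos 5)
Your proposal is correct and follows essentially the same route as the paper: combine the Monge--Amp\`ere identity \eqref{elliptic-family} with the boundedness of $h_t$ and of $w_t$ on $U_k$ and the upper Hessian bound of Proposition \ref{upper-bound-Hessian} to bound the product $\pi(\nabla\psi_t)$ below on $U_k$, then use the uniform upper bound on each factor $(\alpha,\nabla\psi_t)$ (which holds simply because $\nabla\psi_t$ takes values in the bounded polytope $2P_+$) to distribute the lower bound to each individual factor. The paper's proof is just a terser version of the same argument.
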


\begin{proof}
  By  Proposition \ref{upper-bound-Hessian} and (\ref{elliptic-family}),  we have
 $$\pi (\nabla\psi_t )\ge C_k', ~{\rm on}~ U_k.$$
 Note that $<\nabla\psi_t, \alpha>$ is uniformly bounded. Thus we also get (\ref{nabla-psi}).

\end{proof}

 \begin{prop}\label{limit-cone} Under the assumption in  Lemma \ref{general-case}, the set $U_k-x_t$ converges to a cone $\mathfrak a_+'\supset \mathfrak a_+$   in   $\mathfrak a$  as $t\to\infty$  by
 $$ \mathfrak a_+'=\{x\in \mathfrak a|~<W_{\alpha'}. x>>0, ~\forall ~\alpha'\in \Phi_+\setminus\Phi_u\}. $$
 \end{prop}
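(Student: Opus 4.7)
The plan is to identify the Kuratowski limit of the translated sublevel sets $U_k-x_t$ as $t\to\infty$ by tracking which of the defining wall constraints of $\mathfrak a_+$ survive the translation by $-x_t$. Since $U_k\subset\mathfrak a_+$, every $y\in U_k-x_t$ satisfies $\langle\alpha,y\rangle>-\langle\alpha,x_t\rangle$ for each $\alpha\in\Phi_+$, so the asymptotic behavior of $\langle\alpha,x_t\rangle$ on $\Phi_u$ versus $\Phi_+\setminus\Phi_u$ is what dictates the limiting cone.

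First I would invoke the dichotomy from Lemma~\ref{general-case}: for $\beta\in\Phi_u$, relation (\ref{case-1-1}) gives $\langle\beta,x_t\rangle\to\infty$, so the walls $W_\beta-x_t$ recede to infinity and impose no constraint on any bounded region of $\mathfrak a$; while for $\alpha'\in\Phi_+\setminus\Phi_u$, the value $\langle\alpha',x_t\rangle$ stays in $[\delta_0,A]$ by (\ref{xi-condition-1}) and (\ref{case-2-1}), and Lemma~\ref{limit-J} even pins down its limit to $\langle\alpha',\mathbf a_0\rangle$, so these walls persist in the limit. Combining this with Lemma~\ref{regular-uk}, which gives the quantitative separation $\mathrm{dist}(U_k,W_{\alpha'})\ge\delta_k>0$, one sees that any accumulation point $y_\infty$ of a sequence $y_t\in U_k-x_t$ satisfies $\langle\alpha',y_\infty\rangle\ge 0$ for every $\alpha'\in\Phi_+\setminus\Phi_u$ and no other wall constraints; hence $y_\infty\in\overline{\mathfrak a_+'}$.

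For the reverse inclusion I would fill the cone $\mathfrak a_+'$ out using the diameter bound $\mathrm{diam}(U_k)\le D_k$ from Proposition~\ref{concentration}(2) together with the bounded-minimum estimate (\ref{xi-condition-2}), which localizes each sublevel set near $x_t$. Given a compact $K\subset\mathfrak a_+'$, for $t$ large every $y\in K$ lies in $\mathfrak a_+-x_t$ (since $\langle\alpha',y+x_t\rangle\ge\langle\alpha',y\rangle+\delta_0>0$ and $\langle\beta,y+x_t\rangle\to\infty$), and the uniform $C^{l,\alpha}$-bound of $\hat\psi_t$ on each $U_k$ from Proposition~\ref{local-uniform}, combined with the parabolic MA equation (\ref{reduced-KRflow}), yields the growth control $\hat\psi_t(y)+j(y+x_t)-j(x_t)\le C(K)$ for $y\in K$. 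Choosing $k$ so that $C(K)<k+1$ then forces $K\subset U_k-x_t$. The inclusion $\mathfrak a_+\subset\mathfrak a_+'$ is immediate because the defining half-spaces of $\mathfrak a_+'$ form a proper subfamily of those of $\mathfrak a_+$.

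The main obstacle is the second inclusion, i.e., showing that a point $y\in\mathfrak a_+'$ is actually captured inside some $U_k-x_t$ rather than only lying in the translated chamber $\mathfrak a_+-x_t$. This requires converting the containment $y+x_t\in\mathfrak a_+$ into the quantitative bound $w_t(y+x_t)\le m_t+k+1$. It is precisely the local $C^{l,\alpha}$-estimates established in the previous section (through the Hessian upper bound of Proposition~\ref{upper-bound-Hessian}, the non-degenerate gradient estimate of Lemma~\ref{gradient-nondegenerate}, and Proposition~\ref{local-uniform}) that supply this quantitative growth control, so that the second half follows once the cone-level picture from the wall analysis has been set up.
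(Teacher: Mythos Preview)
Your two-inclusion strategy is sound and is morally the same analysis as the paper's, but there is one point where the logic loops back on itself. In the reverse inclusion you want to show that a given compact $K\subset\mathfrak a_+'$ lies in $U_k-x_t$ for large $k$, and for this you invoke Proposition~\ref{local-uniform}. But that proposition only controls $\hat\psi_t$ \emph{on} $U_k-x_t$; it gives you nothing at a point not yet known to belong to $U_k-x_t$, so as written the argument is circular. The correct input here is simply the \emph{global} Hessian upper bound of Proposition~\ref{upper-bound-Hessian} together with $\hat\psi_t(0)=0$ and the boundedness of $\nabla\psi_t(x_t)\in P_+$; these give $|\hat\psi_t(y)|\le C(|y|^2+|y|)$ on all of $(\mathfrak a_+-x_t)$ without any reference to $U_k$. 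You do mention Proposition~\ref{upper-bound-Hessian} in your final paragraph, so this is more a matter of pointing to the right estimate than a missing idea.

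For comparison, the paper does both inclusions in one stroke by writing the asymptotic expansion
\[
\hat w_t(x)=w_t(x+x_t)=\hat\psi_t(x)-\langle 4\rho_u,x\rangle-\log\hat J'(x)+O(1),
\]
obtained directly from Lemma~\ref{limit-mt} and Lemma~\ref{limit-J}. Once this formula is on the table, the first two terms are bounded on any ball in $\mathfrak a$ (again by the global Hessian bound), so the blow-up of $\hat w_t$ is governed entirely by $-\log\hat J'(x)$, which diverges exactly at the surviving walls $W_{\alpha'}$, $\alpha'\in\Phi_+\setminus\Phi_u$. Your wall dichotomy and growth-control argument unpack this same formula piece by piece; the paper's route is shorter because it packages the $j(x+x_t)-j(x_t)$ computation once rather than revisiting it separately for the two inclusions.
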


\begin{proof}
By Lemma \ref{limit-mt} and Lemma \ref{limit-J},  we have
\begin{align}
\label{w-asymptotic}
\hat w_t(x)=w_t(x+x_t)=\psi_t(x+x_t)-\psi_t(x_t)-<\rho_u, x> -\log \hat J'(x+\mathbf a_0)+O(1).
\end{align}
Let $B_r$ be $r$-ball centered at the original  in  $\mathfrak a$.  Then  the leading term of  $\hat w_t(x)$  in    $B_r\cap\mathfrak a_+'$ is of  $-\log \hat J'(x+\mathbf a_0)$. Thus the set   $\{x\in \mathfrak a|~\hat w_t(x)\le  k\}$ will exhaust  $\mathfrak a_+'$. Thus the proposition is true.
\end{proof}

\section{Proof of Theorem \ref{main-theorem-2}}

In this section, we prove  Theorem \ref{main-theorem-2} and so get  Theorem \ref{main-theorem}.   We first show  the existence of limit solution of RKR flow  (\ref{reduced-KRflow}).

\subsection{ Smooth KR soliton solution on $\mathbf N$}

 \begin{prop}\label{limit-solution} Suppose that  $x_t$  satisfies  (\ref{case-1-1}) and  (\ref{case-2-1}) in Theorem \ref{main-theorem-2}.  Let
$\hat J'(x)$ be given as  in (\ref{limit-j}) and
$$J'(x)=\hat J'(x-\mathbf a_0).$$
Then there is a  smooth function  $\phi$ on  $\mathfrak a_+'$ which  satisfies the following equation,
\begin{align}\label{limit-equation} \pi (\nabla\phi + 4\rho_u){\rm Hess}(\nabla^2 \phi)= J'(x)  e^{-\phi-Y(\phi)},
\end{align}
where $Y(\phi)=<Y, \nabla \phi>$ for some   $Y\in \mathfrak a_+'$ which satisfies
 \begin{align}\label{Y-vectors}
<\alpha', Y>=0, ~\forall~ \alpha' \in \Phi_+\setminus  \Phi_u.
\end{align}
\end{prop}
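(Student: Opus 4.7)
The plan is to obtain $\phi$ as a limit of translated KR-flow potentials and to identify the drift $Y$ from the asymptotic behavior of Perelman's Ricci potential after translation by $\exp(x_t)$. Set $\hat\psi_t(y) := \psi_t(y+x_t) - \psi_t(x_t)$. Proposition \ref{local-uniform} provides uniform $C^{l,\alpha}_{\mathrm{loc}}$ bounds for $\hat\psi_t$ on each $U_k - x_t$, and Proposition \ref{limit-cone} shows that these sets exhaust the limit cone $\mathfrak{a}_+'$. A diagonal argument then extracts a subsequence $t_i \to \infty$ and a strictly convex smooth limit $\hat\phi$ on $\mathfrak{a}_+'$ with $\hat\psi_{t_i} \to \hat\phi$ in $C^{l,\alpha}_{\mathrm{loc}}$. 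Define $\phi(y) := \hat\phi(y) - 4\langle\rho_u, y\rangle$, so that $\nabla\hat\phi = \nabla\phi + 4\rho_u$ and $\mathrm{Hess}(\hat\phi) = \mathrm{Hess}(\phi)$; this $\phi$ is the sought-after function.

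Next I pass to the limit in (\ref{elliptic-family}) at the translated point $y+x_t$. For $\beta \in \Phi_u$, the growth $\langle\beta, x_t\rangle \to \infty$ from Case 3.2) gives $\sinh(\langle\beta, y+x_t\rangle)/\sinh(\langle\beta, x_t\rangle) \to e^{\langle\beta, y\rangle}$; for $\alpha' \in \Phi_+ \setminus \Phi_u$, Lemma \ref{limit-J} provides finite convergence of the analogous ratios. Summing the logarithms yields
\[
j(y+x_t) - j(x_t) \longrightarrow -4\langle\rho_u, y\rangle - \log\bigl(J'(y)/J'(0)\bigr)
\]
uniformly on compact subsets of $\mathfrak{a}_+'$. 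Perelman's bound $|h_t| \le C$ and the gradient estimate of Lemma \ref{gradient-vector} permit passage to a further subsequence with $h_{t_i}(y+x_{t_i}) \to \bar h(y)$. The limit of (\ref{elliptic-family}) then reads
\[
\pi(\nabla\phi + 4\rho_u)\,\mathrm{Hess}(\nabla^2\phi) = J'(y)\,e^{-\phi - \bar h(y) - C_\infty},
\]
where $C_\infty$ is a bounded constant incorporating $\lim m_{t_i}$ and $\log J'(0)$.

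To finish I identify $\bar h$ with a linear drift in $\nabla\phi$. Perelman's identity $\partial_t\psi_t = c_t - h_t$, together with the chain rule, gives
\[
\partial_t\hat\psi_t(y) = -\bigl[h_t(y+x_t) - h_t(x_t)\bigr] + \langle\dot x_t,\,\nabla\hat\psi_t(y) - \nabla\hat\psi_t(0)\rangle.
\]
Parabolic regularity for (\ref{reduced-KRflow}) and Perelman entropy monotonicity applied to the Cartan-torus-translated flow, parallel to the toric case of \cite{Zhu1}, yield along a further subsequence $\partial_t\hat\psi_{t_i} \to 0$ and $\dot x_{t_i} \to Y$ for a single $Y \in \mathfrak{a}$. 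Taking the limit,
\[
\bar h(y) - \bar h(0) = \langle Y, \nabla\hat\phi(y) - \nabla\hat\phi(0)\rangle,
\]
so $\bar h(y) + C_\infty = Y(\phi) + \mathrm{const}$ after using $\nabla\hat\phi = \nabla\phi + 4\rho_u$ to collapse the $4\rho_u$ piece into the constant; absorbing the remaining constant by a shift $\phi \mapsto \phi + \mathrm{const}$ (which leaves the equation invariant) produces (\ref{limit-equation}). Finally, Lemma \ref{general-case} forces $\langle\alpha', X_\infty\rangle = 0$ for every $\alpha' \in \Phi_+ \setminus \Phi_u$, where $X_\infty = \lim x_{t_i}/|x_{t_i}|$; since $\dot x_{t_i}$ is asymptotically aligned with $X_\infty$, the limit $Y$ inherits the orthogonality condition (\ref{Y-vectors}).

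The principal obstacle is the drift extraction in the last step: proving along a subsequence that $\partial_t\hat\psi_{t_i} \to 0$ and $\dot x_{t_i}$ converges to a single $Y$. This requires parabolic $C^{l,\alpha}$ estimates on the translated flow together with Perelman entropy monotonicity, suitably adapted from \cite{Zhu1} to account for the Cartan-torus translations $\exp(x_t)$ that stabilize the flow asymptotically. The smoothness of $\phi$ and the constraint (\ref{Y-vectors}) then follow routinely from the framework built in Sections 3 and 4.
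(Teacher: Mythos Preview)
Your overall strategy---extracting $\psi_\infty$ by local $C^{l,\alpha}$ compactness and passing to the limit in (\ref{elliptic-family})---is the paper's, and the obstacle you flag is precisely the content of Lemma~\ref{x-vector}. But your proposed resolution has real gaps. You differentiate $\hat\psi_t$ in $t$ using $\dot x_t$, yet $x_t$ is only defined as the minimizer of $w_t$ and carries no a priori regularity in $t$; the paper first proves the discrete bound $|x_t-x_{t+\epsilon_0}|\le\kappa$ (combining the short-time maximum-principle estimate $|\psi_s'-\psi_t'|\le C_1\epsilon_0$ with the uniform Hessian control of $w_t'$ near $x_t$, which traps the minimizer of $w_s'$ inside $B_\kappa(x_t)$), and only then replaces $x_t$ by a smooth path $y_t$ satisfying $|x_t-y_t|+|\dot y_t|\le C$. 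Your appeal to Perelman entropy monotonicity for $\partial_t\hat\psi_{t_i}\to 0$ is also not what the paper does and is not evidently sufficient: entropy convergence is an integrated statement and does not by itself control the pointwise time derivative of the translated potential on $\mathfrak a_+'$. The paper argues instead by contradiction (Step~2 of Lemma~\ref{x-vector}): if $\lim_s\partial_s\hat\psi'$ were nonconstant it would, via the HT conjecture, be a nontrivial HVF potential, producing a uniform gap $|\partial_s\hat\psi'(x_l)-\partial_s\hat\psi'(y_l)|\ge a_0$ for all large $s$; integrating in $s$ then forces the oscillation of $\hat\psi'$ to diverge, contradicting the uniform $C^0$ bound (\ref{hat-psi}).

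For (\ref{Y-vectors}), the claim that $\dot x_{t_i}$ is asymptotically aligned with $X_\infty$ is unjustified: $x_t/|x_t|\to X_\infty$ does not force convergence of $\dot x_t$, let alone to a multiple of $X_\infty$. The paper's Step~3 is more direct and does not go through $X_\infty$: since $\langle\alpha',y_{t_i}\rangle$ stays bounded for each $\alpha'\in\Phi_+\setminus\Phi_u$ while $y_{t_{i+1}}-y_{t_i}=|y_{t_{i+1}}-y_{t_i}|\bigl(Y/|Y|+o(1)\bigr)$, the vanishing of $\langle\alpha',y_{t_{i+1}}-y_{t_i}\rangle$ in the limit forces $\langle\alpha',Y\rangle=0$. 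One minor point: your $\phi=\hat\phi-4\langle\rho_u,\cdot\rangle$ omits the translation by $\mathbf a_0$ in (\ref{solution-phi}); without it the right-hand side carries $\hat J'$ rather than $J'$, and one further shift $x\mapsto x+\mathbf a_0$ is still needed to land on (\ref{limit-equation}).
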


\begin{proof}
By Proposition \ref{local-uniform},  $\hat\psi_t$ is uniformly $C^{l,\alpha}$-bounded  on each $U_k-x_t$.
 Then  there is  a sequence of  $\hat\psi_{t_i}$  which  converges  locally uniformly  to a smooth function  $\psi_\infty$  on   $\mathfrak a_+'$.
On the other hand,  by  (\ref{elliptic-family}), we have
\begin{align}\label{elliptic-family-2}
& \pi (\nabla\hat\psi_{t_i} ){\rm Hess}(\nabla^2 \hat\psi_{t_i}) \notag\\
 &= e^{-h_{t_i}}   \prod_ {\alpha'\in \Phi_+\setminus\Phi_u} {\rm sinh}^2(<\alpha', x+x_{t_i}>)e^{-(\hat\psi_{t_i} +\psi_{t_i}(x_{t_i})- <4\rho_u, x_{t_i}>)}\notag\\
 & \times  e^{- <4\rho_u, x_{t_i}>}\prod_ {\beta\in \Phi_u} {\rm sinh}^2(<\beta,  x+x_{t_i}>).
  \end{align}
 By Lemma \ref{limit-mt} and Lemma \ref{limit-J},
 on  each  set $U_k-x_{t_i}$,   the function
  \begin{align}\label{function-1} e^{\psi_{t_i}(x_{t_i})- <4\rho_u, x_{t_i}>)} \times  e^{- <4\rho_u, x_{t_i}>}\prod_ {\beta\in \Phi_u} {\rm sinh}^2(<\beta, x_{t_i}+x>)\to A_0e^{<4\rho_u, x>},
\end{align}
  for some constant $A_0$,  and the function
 \begin{align}\label{function-2}  \prod_ {\alpha'\in \Phi_+\setminus\Phi_u} {\rm sinh}^2(<\alpha', x+x_{t_i}>)  \to \hat J'(x),
 \end{align}
 respectively. Moreover, the convergence are both uniformly  on a fixed domain $U\subset \mathfrak a_+'$ which is contained in $U_k-x_t$ as $k>>1$.

 \begin{lem}\label{x-vector}  Let
 $$\hat h_{t}(\cdot)= h_{t}(\cdot+x_{t}).$$
   Then there are  a $Y\in \mathfrak a_+$ and a  constant $c$ such that
    \begin{align}\label{ht-limit}
   \lim_{t} \hat h_{t}(\cdot) =<Y, \nabla \psi_\infty>+c.
   \end{align}
   The convergence of  $\hat h_{t}(\cdot)$  is locally uniformly   $C^{l,\alpha}$ as  same as $\hat\psi_{t}.$   Moreover,   if $x_t$  satisfies  (\ref{case-1-1}) and  (\ref{case-2-1}) in Theorem \ref{main-theorem-2},   $Y$ satisfies  (\ref{Y-vectors}).
 \end{lem}

By the above lemma together with (\ref{function-1}) and (\ref{function-2}),  we get from (\ref{elliptic-family-2}),
 \begin{align}\label{solution-psi'}\pi (\nabla\psi_\infty){\rm Hess}(\nabla^2 \psi_\infty)=A_0' J'(x-\mathbf a_0) e^{ <4\rho_u,  x>} e^{-\psi_\infty-Y(\psi_\infty)}.
 \end{align}
 for some constant $A_0'$.
    The above equation can be defined in   the  cone $\mathfrak a_+'$ by Proposition \ref{limit-cone}.
  Let
 \begin{align}\label{solution-phi}
 \phi(x)=\psi_\infty(x+\mathbf a_0) -<4\rho_u,  x> .
 \end{align}
 Then we obtain
 \begin{align}\label{solution-0}\pi (\nabla\phi +4\rho_u){\rm Hess}(\nabla^2 \phi)= \tilde A_0J'(x)  e^{-\phi-Y(\phi)}.
 \end{align}
 Hence,   by adding a constant to  $\phi(x)$,   $\phi(x)$ becomes a solution of (\ref{limit-equation}).

\end{proof}

\begin{proof}[Proof of Lemma \ref{x-vector}]  We will modify the argument in \cite[Lemma 4.1]{Zhu1} for toric  manifolds.  The difficulty at present is that $\hat\psi_t$ has only the local convergence at the present.

Step 1. Recall $x_t$ is the critical point of $w_t$ in (\ref{m-t}).   We claim:
there is an $\epsilon_0>0$ such that
   \begin{align}\label{x-distance} |x_{t}-x_{t+\epsilon_0}|\le C, ~\forall~t>0,
\end{align}
 where  $C$ is a   uniform constant.

 For any time $t>0$,    we consider  the  equation (\ref{reduced-KRflow}) on $[t, t+\epsilon_0]$,
\begin{align}\label{reduced-KRflow-2}
\frac{\partial \psi'}{\partial s}=\log [ \pi (\nabla\psi'){\rm Hess}(\nabla^2 \psi')]+(\psi'+j), ~s\in [t, t+\epsilon_0].
   \end{align}
By  the Perelman's result,   we may assume  that
$$\psi_{s}'=0, ~ |\frac{\partial \psi'}{\partial s}|_{t}=h_t|\le A,$$
where $h_t$ is chosen as in (\ref{elliptic-family}).
Then the solution $ \psi'_s$ is just different to a constant with $\psi_s$ in  (\ref{elliptic-family}).
By the maximum principle, we get
\begin{align}\label{small-psi}
|\psi_{t}'-\psi_{s}'|\le C_1\epsilon_0, ~|\frac{\partial \psi'}{\partial s}|\le 2A,~\forall ~s\in [t, t+\epsilon_0].
\end{align}

Let
$$w'=w_{s}'=\psi_{s}'+j.$$
Then   the minimal point of $w_{s}'$ is also same with $x_s$  of $w_{s}$  in (\ref{xi-condition-2}).
By Proposition \ref{local-uniform},  we see that
$$C_2^{-1} \le {\rm Hess}(\nabla^2 \psi_{t}') \le C_2, ~{\rm on}~ B_{\kappa}(x_{t}).
$$
It follows that
$$(C_2')^{-1} \le {\rm Hess}(\nabla^2 w_{t}') \le C_2', ~{\rm on}~ B_{\kappa}(x_{t}).
$$
Thus
  $$|w_t'(x_t)-w_t'(x)|\ge 4C_1\epsilon_0, ~\forall~x\in \partial  B_{\kappa}(x_{t}), $$
if  $\epsilon_0$ is chosen  small enough. As a consequence,
$$w_t'(x)\ge w_t'(x_t)+ 4C_1\epsilon_0, ~\forall~x\in \partial  B_{\kappa}(x_{t}).$$
Hence, by (\ref{small-psi}),  we get
  $$w_s'(x)\ge w_{s}'(x_t)+ 2C_1\epsilon_0\ge \inf_{ x'\in B_{\kappa}(x_{t})}w_s'(x')+ 2C_1\epsilon_0, ~\forall~x\in \partial  B_{\kappa}(x_{t}).$$
 Therefore,  there is  a critical point $x_{s}'$ of  $w_{s}'$  in $B_{\kappa}(x_{t})$ such that
  $$\nabla w_{s}'(x_{s}')=0.
  $$

By the convexity of $w_{s}'$, $x_{s}'$ is the  global minimal point of   $w_{s}'$.  Thus  $x_{s}'=x_s$.
 Moreover,
$$|x_{s}-x_{t}|\le \kappa, ~\forall~s\in [t, t+\epsilon_0].$$
In particular,
$$|x_{t+\epsilon}-x_{t}|\le \kappa, ~\forall~s\in [t, t+\epsilon_0].$$
Hence,  we prove (\ref{x-distance}).

By  (\ref{x-distance}),  we see that for any integer $l>0$ it holds
$$ |x_{l\epsilon_0}-x_{(l+1)\epsilon_0}|\le C.$$
  Then we  can choose a  family of
modified points $y_t$ in $\mathfrak a_+$ such that
\begin{align}\label{x-distance-3} |x_t-y_t|\le C~\text{and}~|\frac{dy_t}{dt}|\le C.
\end{align}
Note that the action
\begin{align}\label{holo-tran}x\to x+y_t
\end{align}
corresponds to a family of transformation by  the torus $T$. Thus
$\frac{dy_t}{dt}$  corresponds to a family of HVF  $e_t$ induced by  $\mathfrak  t$.
By (\ref{x-distance-3}),  $e_t$ converges to a limit  HVF $e_\infty$  induced by  $\mathfrak  t$.

Let $\tilde\psi'(\cdot)= \tilde\psi_t'(\cdot)=\psi_t'(y_t+\cdot)$ and
$$w_t'(x)=\tilde\psi_t'(x)+j(x+y_t).$$
Then
$$m_t'=\inf_{ \mathfrak a_+} w_t'(x)= w_t'(x_t)$$
also satisfies   1) in Proposition  \ref{concentration}.
Since $\nabla\psi'$ is uniformly bounded,
analogous to the function  in Proposition \ref{local-uniform},
$$\psi_t(x_t+\cdot)-\psi_t(x_t)=\psi_t'(x_t+\cdot)-\psi_t'(x_t),$$
\begin{align}\label{hat-psi}\hat\psi_t'=\tilde\psi_t'-\psi_t'(y_t)
\end{align}
is also uniformly $C^{l,\alpha}$-bounded  on each $U_k'=\{x\in \mathfrak a_+|~w_t'(x)-m_t'<k+1 \}$. As in Proposition \ref{limit-cone}.   $U_k'-x_t$ converges to the  cone $\mathfrak a_+'\supset \mathfrak a_+$.

Step 2.  We notice that
 \begin{align}\label{derative}\frac{\partial \psi'}{\partial s}=\frac{\partial \tilde\psi'}{\partial s}-e_s( \hat\psi'), ~\forall ~s\in  [l\epsilon_0, (l+1)\epsilon_0]
 \end{align}
 and $\frac{\partial \tilde\psi'}{\partial s}$ is convergent by (\ref{reduced-KRflow-2}) as well as $\hat\psi_s'$ is convergent.

 We claim:  on each $U_k'- x_t$ it holds
\begin{align}\label{h-holomorphic}
\lim_s \frac{\partial \hat\psi'}{\partial s}= {\rm const.}
\end{align}

On the contrary,  we suppose that (\ref{h-holomorphic}) is not true. Note that  $\frac{\partial \hat\psi'}{\partial s}$ is different to a constant with  $\frac{\partial \tilde\psi'}{\partial s}$. Then
$\frac{\partial \hat\psi'}{\partial s}$ converges to a smooth non-constant function on $\mathfrak a_+'$. Moreover, the convergence is uniformly $C^{k,\alpha}$ on $s$.
 By HT conjecture (cf. \cite{WfZ1}), the limit corresponds to a potential function $\theta_v$ of HVF $v$ on $ M_\infty$. In particular,
 there are two points $x_\infty, y_\infty$ in $\mathfrak a_+'$, both of which are limits of two sequences $\{x_l\},   \{y_l\}$ in $U_k'-x_t'$ such that
 $$|\theta_v(x_\infty)-\theta_v(y_\infty)|\ge 2a_0.$$
 Thus as $l>>1$,
 \begin{align}\label{gap-sequence}
| \frac{\partial \hat\psi'}{\partial s}(x_l)- \frac{\partial \hat\psi'}{\partial s}(y_l)|\ge a_0, ~\forall ~s\in  [l\epsilon_0, (l+1)\epsilon_0].
\end{align}

On the other hand, there is a sequence of constants $c_l$ such that
$$\lim_{s\to l\epsilon_0^{-}}\frac{\partial \psi'}{\partial s}=\frac{\partial\psi'}{\partial s}|_{l\epsilon_0}+c_l.$$
  Then we also have
$$\lim_{s\to l\epsilon_0^{-}} \frac{\partial \hat\psi'}{\partial s}=\frac{\partial \hat\psi'}{\partial s}|_{l\epsilon_0}+c_l'.$$
In particular,
$$\lim_{s\to l\epsilon_0^{-}} \frac{\partial \hat\psi'}{\partial s}(x_l)-\lim_{s\to l\epsilon_0^{-}} \frac{\partial \hat\psi'}{\partial s}(y_l)
=\frac{\partial \hat\psi'}{\partial s}|_{l\epsilon_0}(x_l)-\frac{\partial \hat\psi'}{\partial s}|_{l\epsilon_0}(y_l).$$
Thus the function $(\frac{\partial \hat\psi'}{\partial s}(x_l)-\frac{\partial \hat\psi'}{\partial s}(y_l))$ is continuous at $s$.
  Hence,  for any  two integers $l, k$ with $l<k$,  we get
\begin{align}\label{osc-psi} &[\hat\psi'|_{k\epsilon_0}(x_l)- \hat\psi'|_{l\epsilon_0}(x_l)] -  [\hat\psi'|_{k\epsilon_0}(y_l)- \hat\psi'|_{l\epsilon_0}(y_l)]\notag\\
&=\int_{l\epsilon_0}^{k\epsilon_0}[ \frac{\partial \hat\psi'}{\partial s}(x_l)- \frac{\partial \hat\psi'}{\partial s}(y_l)]dt.
\end{align}
But,  it is impossible since the  term is  always bounded at the left side of  (\ref{osc-psi}) by  (\ref{hat-psi}) while the  term goes to  the infinity as $k\to \infty$  by (\ref{gap-sequence}) at the right  side of  (\ref{osc-psi}). Hence, we prove  (\ref{h-holomorphic}).

Let  $\psi_\infty'$ be a $C^\infty$ limit of $\hat\psi_{t_i}'$ and $\mathbf b$ a limit of vectors $x_{t_i}-y_{t_i}$.  Then it is easy to see that
\begin{align}\label{two-limits}\psi_\infty'(x)=\psi_\infty(x-\mathbf b)+c,
\end{align}
for some constant $c$.
Thus by (\ref{derative}) and (\ref{h-holomorphic}), we have
 $$\lim_{t} \hat h_{t}(\cdot) =e_\infty(\nabla \psi_\infty)+c.$$
On the other hand,  since $e_\infty$ is linear on the torus orbit, there is a vector $Y$ in $\mathfrak a_+'$ which generates  $e_\infty$ such that
 $$  e_\infty(\psi_\infty)=<Y,\nabla\psi_\infty>.$$
 Hence, we get (\ref{ht-limit}).

Step 3.   We prove (\ref{Y-vectors}) and assume that $Y$ is not zero without loss of generality.  In case that  $x_t$ satisfies  (\ref{case-1-1}) and  (\ref{case-2-1}) in Theorem \ref{main-theorem-2}.  We may take a sequence of $x_{t_i}$ with  $t_i= i \epsilon_0$. Then
$$|y_{t_{i+1}}-y_{t_i}|\le N_0.$$
Without loss of generality, we may also assume that
\begin{align}\label{orthogonal-2}
\lim_i <\alpha', y_{t_i}>=\delta_{\alpha'}>0, ~\forall~ \alpha' \in \Phi_+\setminus  \Phi_u.
\end{align}
Since
$$ y_{t_i+1}-y_{t_i}=  | y_{t_i+1}-y_{t_i}| (\frac{Y}{|Y|}+o(1)),$$
   for any $\alpha' \in \Phi_+\setminus  \Phi_u$ we get
\begin{align}
0&=\lim_i |<\alpha',  y_{t_i+1}-y_{t_i}>|\notag\\
&= \lim_i | y_{t_{i+1}}-y_{t_i}|~ | <\alpha',  \frac{Y}{|Y|}+o(1)>|\notag\\
&\ge | <\alpha',  \frac{Y}{|Y|}+o(1)>|.  \notag
\end{align}
Hence (\ref{Y-vectors}) must be true.

\end{proof}
\begin{rem}\label{remark-limit} 1).  It seems  possible to prove that $Y=aX_\infty$ for some $a\neq 0$, where  $X_\infty$ is the limit of $X_t$ in (\ref{x-sequence}). In particular,  $Y$ will be not  zero in Case 3) in Theorem \ref{main-theorem-2}.

2) Note that $\psi'_t$ is just different to a constant with $\psi_t$.  By  (\ref{two-limits}),
the limit  $\hat\phi(x)=\psi_\infty'- <\rho_u,  x>$
is same with   the solution $\phi(x)$ in (\ref{limit-equation}), which is the limit of sequence
\begin{align}\label{final-sequence} \psi_{t_i}(x+x_{t_i}+\mathbf b)- \psi_{t_i} (x_{t_i})-<4\rho_u,  x>+c
\end{align}
for some constant $c$.

\end{rem}

By (\ref{orthogonal}), we can define a subgroup $\mathbf H$  of $\hat{\mathbf G}= {\mathbf G}\times {\mathbf G}$ associated to $X_\infty$ as in (\ref{lie-subalgebra}) in Example \ref{ex-degereration}.  In fact,  we  may modify $X_\infty$ to a rational vector in $\mathfrak a$ so that  (\ref{lie-subalgebra}) is satisfied.   Then $\mathbf N=\hat{\mathbf G}/\mathbf H$ is a horosymmetric space with a parabolic subgroup $\mathbf P$  given by (\ref{parabolic-subg-1}). Thus any $\mathbf K$-invariant metric on  $\mathbf N$ is a form of  (\ref{horosymmetry-LL}) which is determined  by  a convex function $\psi$ on $\mathfrak a$ and the KR  soliton equation on  the $\mathbb Q$-Fano compactification $\overline {\mathbf N}$ of $\mathbf N$ is reduced to  (\ref{KR-soliton-LL}).  As a consequence,  the  metrics of form (\ref{horosymmetry-LL})  determined by the solution $\phi$ in   (\ref{limit-equation}) defines a  KR  soliton on $\mathbf N$. Hence,  to complete the proof of Theorem \ref{main-theorem-2}, we need to show that the metric can be extended to $\overline{\mathbf N}$ which  is just the limit of KR flow (\ref{KRF}).

\begin{rem}\label{horospherical} When $\Phi_u=\Phi_+$, $\mathbf P$ is just a Borel subgroup of $\mathbf G\times \mathbf G$. Thus $\mathbf N$ is a  horospherical  space in this case.

\end{rem}

\subsection{Singular KR soliton solution on $\overline{\mathbf N}$}

 \begin{prop}\label{bar-N-soluiion} Let    $Y\in \mathfrak a_+'$  as in Proposition \ref{limit-solution}.  Then there exists  a singular KR soliton $\omega_{KS}'$  w.r.t. $Y$ on the $\mathbb Q$-Fano horosymmetric variety $\overline{\mathbf N}$ \footnote{We do not know  whether  $\omega_{KS}$ can be  extended to be a singular KR soliton on $\overline{\mathbf N}$ with  its weak K\"ahler potential in  $\mathcal E^1(\overline{\mathbf N} ,-K_{\overline{\mathbf N}})$  directly   from the solution $\phi$ of (\ref{limit-equation}) or not.}.
 \end{prop}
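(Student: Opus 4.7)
The plan is to invoke the Hamilton--Tian conjecture for the K\"ahler--Ricci flow (\ref{KRF}) on $(M,J)$ and to identify the resulting Gromov--Hausdorff limit with $\overline{\mathbf{N}}$. By the HT conjecture recalled in the introduction, after passing to a subsequence $\omega(t_i)$ converges in the GH topology to a length space $(M_\infty, \omega_\infty)$ such that $M_\infty$ carries a $\mathbb Q$-Fano structure, $\omega_\infty$ is a singular KR soliton with K\"ahler potential in $\mathcal E^1(M_\infty, -K_{M_\infty})$ with respect to some HVF $X_\infty$ on $M_\infty$, and the convergence is smooth Cheeger--Gromov on the regular locus $\Reg(M_\infty)$.

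Next, I would identify $M_\infty \cong \overline{\mathbf{N}}$ and $\omega_\infty$ with the sought-after $\omega'_{KS}$. On one hand, Proposition \ref{limit-solution} and the construction leading up to it show that the Cartan torus translations $\sigma_{y_{t_i}}$ pull $\omega(t_i)$ back to metrics converging locally smoothly on the horosymmetric open orbit $\mathbf{N}$ to the smooth KR soliton $\omega_{KS}$ determined by $\phi$ via (\ref{horosymmetry-LL}), with soliton vector $Y$. On the other hand, the smooth Cheeger--Gromov convergence on $\Reg(M_\infty)$ from HT provides a sequence of diffeomorphisms into $M$ along which $\omega(t_i)\to \omega_\infty$ in $C^\infty$ on compact subsets; composing with $\sigma_{y_{t_i}}$ identifies an open dense orbit of $\Reg(M_\infty)$ with $\mathbf{N}$ and carries $\omega_\infty$ to $\omega_{KS}$ there. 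The algebraic identification is then carried out using the partial $C^0$-estimate to embed both $M_\infty$ and $\overline{\mathbf{N}}$ into a common projective space via pluricanonical sections, together with the explicit $\mathbb C^*$-degeneration of Example \ref{ex-degereration} induced by (a rational approximation of) $X_\infty$, which shows that the limiting projective variety is precisely $\overline{\mathbf{N}}$. Under this identification, the HT soliton vector $X_\infty$ matches $Y$ thanks to the Ricci-potential convergence $\hat h_t \to \langle Y, \nabla\psi_\infty\rangle + c$ established in Lemma \ref{x-vector}, since the soliton HVF on $M_\infty$ is intrinsically determined by this limit.

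The main obstacle will be the rigorous algebraic identification $M_\infty \cong \overline{\mathbf{N}}$. While the smooth agreement on the open orbit $\mathbf{N}$ follows relatively directly from the two convergence results above, pinning down the compactification requires either invoking the K-stability and the minimizer property of the $H$-invariant for $\mathbf G\times \mathbf G$-equivariant $\mathbb C^*$-degenerations induced by the Cartan torus (so that the central fiber is uniquely determined and must coincide with $\overline{\mathbf{N}}$), or directly analyzing the behavior of sections of $H^0(M, -mK_M)$ under the translations $\sigma_{y_{t_i}}$ and showing that the limiting sections generate the homogeneous coordinate ring of $\overline{\mathbf{N}}$ cut out by the Lie data in (\ref{lie-subalgebra})--(\ref{parabolic-subg-1}). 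Once this matching is in place, the singular KR soliton $\omega_\infty$ on $M_\infty$ transfers to a singular KR soliton on $\overline{\mathbf{N}}$ with respect to $Y$, which is the desired $\omega'_{KS}$.
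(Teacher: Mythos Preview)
Your approach differs substantially from the paper's, and it contains a genuine circularity.

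The paper does \emph{not} identify $M_\infty\cong\overline{\mathbf N}$ at this stage. Instead, the proof of Proposition \ref{bar-N-soluiion} proceeds as follows: (i) show the volume identity ${\rm vol}(\mathbf N,\omega_{KS})={\rm vol}(M,\omega_0)$ by an explicit decay estimate for $\int_{\mathfrak a_+\setminus U_k}e^{-w_t}dx$ together with the local convergence; (ii) deduce from this that ${\rm Image}(\nabla\psi_\infty)=P_+$, hence that the moment polytope associated to $\overline{\mathbf N}$ equals $P'$ and is \emph{fine}, and that $(\mathbf N,\omega_{KS})$ has full mass on $\overline{\mathbf N}$; (iii) invoke Corollary \ref{extension-KR} from the Appendix, which combines Lemma \ref{necessary-result} (full mass implies the barycenter condition (\ref{bary})) with Theorem \ref{LTZ-generalization} (variational existence of a singular KR soliton under (\ref{bary}) on a $\mathbb Q$-Fano horosymmetric variety with fine polytope). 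The resulting $\omega_{KS}'$ is produced by the variational method and need not a priori coincide with $\omega_{KS}$; this is precisely the point of the footnote.

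Your proposed route is problematic because the identification $M_\infty\cong\overline{\mathbf N}$ is exactly what the paper establishes \emph{later}, in the proof of Theorem \ref{main-theorem-2}, and that argument explicitly relies on Proposition \ref{bar-N-soluiion}: in the non-KE case one needs to know that $\overline{\mathbf N}$ already admits a singular KR soliton w.r.t.\ $Y$ in order to invoke relative modified K-stability and conclude that the $\mathbb C^*$-degeneration from $\overline{\mathbf N}$ to $\hat M_\infty$ is trivial; in the KE case one similarly needs K-polystability of $\overline{\mathbf N}$. Your two suggested mechanisms for the identification both require such input: the $H$-invariant minimizer argument presupposes modified K-polystability of $\overline{\mathbf N}$, and the partial $C^0$-estimate alone does not pin down the compactification, since the translations $\sigma_{y_t}$ are not a genuine $\mathbb C^*$-action and matching the analytic GH limit with the algebraic central fiber needs a uniqueness principle. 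Thus your argument is circular as written. The paper's intrinsic route via the polytope and Corollary \ref{extension-KR} avoids this by producing a soliton on $\overline{\mathbf N}$ without ever touching $M_\infty$.
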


\begin{proof}
 Let $\sigma_t$ be a   family of holomorphism  transformations  by actions  $x\to x+y_t$   in  (\ref{holo-tran}).
Then by Proposition  \ref{limit-solution}  (also see   Remark \ref{remark-limit}-2)),   $\phi(x)$ modulo a constant  is a smooth  limit of
$$ \phi_{t_i}=\psi_{t_i} (x+y_{t_i}+\mathbf a_0)-\psi(y_{t_i})- <\rho_u,  x> .$$
Let
$$W_k=\{z\in M|~\pi(z)=x\in U_k\},$$
where $z$ are locally holomorphic coordinates defined by (\ref{local-coordi}) and $\pi$ is the projection of local holomorphic coordinates to $\mathfrak a_+$.
Hence, we can check the curvature with any derivatives of  the induced metrics
\begin{align}\label{limit-sequence}\sigma_{t_i}^*(\sqrt{-1}\partial\bar{\partial}\phi_{\psi_{t_i}})
\end{align}
 is uniformly bounded on $W_k$.
By the non-collapsing result of Perelman \cite{Pe, ST},  these metrics
converges  locally to a KR soliton $\omega_{KS}$ on the    horosymmetric space $\mathbf N$ constructed in Example \ref{ex-degereration}.
Moreover,  $\omega_{KS}$  is of form (\ref{horosymmetry-LL}) which is determined by the solution $\phi$ of (\ref{limit-equation}).

We claim:
\begin{align}\label{volume-N}
{\rm vol}(\mathbf N,\omega_{KS})={\rm vol}(M,\omega_{0}).
\end{align}

First, we notice that  similar with the case of toric manifolds \cite{WxZ, Zhu1},  from the argument for the upper bound estimate of $m_t$ in Proposition \ref{concentration} (also see \cite[Proposition 4.4]{Del1}), we can get an estimate
$$ \int_{\mathfrak a_+\setminus U_k}e^{w_t} dx\le\frac{C}{e^{m_t}} \sum_{l\ge k+1} \frac{(l+1)^n}{e^l},$$
which goes to zero  by Proposition \ref{concentration} as $k\to\infty$.
Since  $e^{-h_{t}}$ is uniformly bounded,   we obtain
$$ \lim_{k,t\to\infty}\int_{\mathfrak a_+\setminus U_k}  e^{-h_t+w_t} dx=0.$$
By  (\ref{elliptic-family}), it follows that
\begin{align}\label{zero-volume}
  \lim_{k,t\to\infty} \int_{\mathfrak a_+\setminus U_k}\pi (\nabla\psi_{t}){\rm Hess}(\nabla^2 \psi_{t})dx
=0.
 \end{align}

Next, we let  $2P_\infty= {\rm Image}(\nabla\psi_\infty) ({\mathfrak a_+'})$.
 Then  by the metric formula  (\ref{horosymmetry-LL}) and the  local convergence of  $\psi_{t_i}$,  we have
\begin{align}\label{volume-identity}
{\rm vol}(\mathbf N,\omega_{KS})&=C_0\int_{2P_\infty-4\rho_u}\pi (y +4\rho_u) dy\notag\\
&=C_0\int_{2P_\infty}\pi (y) dy\notag\\
& =C_0\int_{{\mathfrak a_+'}} \pi (\nabla\psi_\infty){\rm Hess}(\nabla^2 \psi_\infty) dx\notag\\
&= C_0\lim_k\lim_i \int_{U_k}  \pi (\nabla\psi_{t_i}){\rm Hess}(\nabla^2 \psi_{t_i}) dx.
\end{align}
Thus by  (\ref{zero-volume}) together with  (\ref{volume}), we derive
\begin{align}\label{volune-preserve}
{\rm vol}(\mathbf N,\omega_{KS})&= C_0\int_{2P_+}\pi (y) dy - C_0\lim_k\lim_i \int_{\mathfrak a_+\setminus U_k}\pi (\nabla\psi_{t_i}){\rm Hess}(\nabla^2 \psi_{t_i})dx\notag\\
& = C_0\int_{2P_+}\pi (y) dy= {\rm vol}(M,\omega_{0}).
\end{align}
Hence,    (\ref{volume-N}) is true.

By  (\ref{volume-identity}) and (\ref{volune-preserve}),
we  have
\begin{align}\label{p-volume}\int_{2P_\infty}\pi (y ) dy=\int_{2P_+}\pi (y) dy.
\end{align}
Since $ P_\infty\subseteq  P_+$,  we get
\begin{align}\label{polytope-limit} P_\infty=P_+.
\end{align}
As consequence,   the moment polytope $P'$    associated to  the torus  Lie algebra  $\mathfrak  t'$ and the metric $\omega_{KS}$  on $\mathbf N$ is conjugate   with $P$.  Thus $P'$ satisfies the Delzant condition \cite{Ab}.  In particular,
 $P'$  is  \emph{fine}, which  means  each vertex of $P$ is the intersection of precisely $r$
$(={\rm dim}(P'))$ facets (cf. \cite{Do, LTZ2}).

For any  admissible metric $\omega$ representing $\frac{1}{l} c_1( \overline{\mathbf N},  L^{-l}_{\overline{\mathbf N}})$, by the  $\mathbb C^*$-degeneration,
we have
$${\rm vol}(M,\omega_{0})={\rm vol}(\overline{\mathbf N}, \omega).$$
Then   by (\ref{volume-N}),  we get
$${\rm vol}(\mathbf N,\omega_{KS})={\rm vol}(\overline{\mathbf N}, \omega), $$
which means that  $(\mathbf N, \omega_{KS})$ has the full mass. On the other hand,
as in the computation of volume in (\ref{volume-identity}),  it is easy to see that
$${\rm vol}(\overline{\mathbf N},\omega)=C_0\int_{2P_{\overline{\mathbf N}}^+}\pi (y+4\rho_u) dy,$$
where is  $P_{\overline{\mathbf N}}^+$ is the   quotient of   moment polytope $P_{\overline{\mathbf N}}$   associated to  $\frac{1}{l} c_1( \overline{\mathbf N},  L^{-l}_{\overline{\mathbf N}})$ by the  restricted Weyl group (cf. \cite{Del2}).
Thus analogous to (\ref{p-volume}), we derive
 \begin{align}\label{p-volume-2}\int_{2P_\infty-4\rho_u}\pi (y +4\rho_u) dy=\int_{2P_{\overline{\mathbf N}}^+}\pi (y+4\rho_u) dy.
\end{align}
Since $ P'\subseteq   P_{\overline{\mathbf N}}$,  we conclude that
$$P'=  P_{\overline{\mathbf N}}.$$
Hence,  we prove that  $P_{\overline{\mathbf N}}$   is  \emph{fine}  as same as $P'$ while   $(\mathbf N, \omega_{KS})$ has the full mass.

 Now we can apply  Corollary \ref{extension-KR}  to see that  there is   a singular KR soliton $\omega_{KS}'$ w.r.t.  $Y$ on the $\mathbb Q$-Fano horosymmetric variety $\overline{\mathbf N}$. 

 \end{proof}

 \begin{proof}[Proof of Theorem \ref{main-theorem-2}]
It is clear that the family  of $x_t$ either  exists a  sequence of $t_i$ which is uniformly bounded, or
it is  going to the infinity uniformly.  In the latter case, the family  of $x_t$ either  exists a  sequence of $t_i$ whose $Pj(x_{t_i})$  is uniformly bounded, or  $Pj(x_{t})$   goes to the infinity uniformly. Thus we need to prove  the theorem  in  the three cases divided there, respectively.

Case 1). This case is actually same as one by the continuity method in \cite{WxZ, Del1}.
We can let the initial metric $\omega_0$ in (\ref{reduced-KRflow}) as a  background $\mathbf K\times \mathbf K$-invariant metric given by a  $\mathbf K\times \mathbf K$-invariant function
$\psi_0$ as in (\ref{MA-omega}) such that
$$\omega_0=\sqrt{-1}\partial\bar{\partial}\phi_{\psi_0}.$$
Then as in case of toric manifolds, the K\"ahler potentials $\varphi_{\psi_t}=\phi_{\psi_t}-\phi_{\psi_0}$ for the solution $\psi_t$ of (\ref{reduced-KRflow}) has a uniformly upper bound (cf. \cite[Lemma 3.4]{WxZ}, \cite[Proposition 3.2]{Zhu1}). By the Harnack inequality
(cf. \cite[Proposition 3.1, 5.1]{TZ2}),  we get a uniform $C^0$-bound for $\varphi_{\psi_t}$. Hence, by the regularity of KR flow (\ref{KRF}), we get all  $C^{k,\alpha}$-norms for $\varphi_{\psi_t}$.   As a consequence, the limit $\varphi_\infty$ of $\varphi_{\psi_t}$ will define a KE metric
$$\omega_{KE}=\sqrt{-1}\partial\bar{\partial} (\phi_{\psi_0} +\varphi_\infty)$$
on $(M, J)$.
This part is finished.

Case 2). This case is is  same as Case 1).  Let
$$x_t=a_t+ Pj(x_{t}),$$
 where   $ a_t\in \mathfrak a_c.$
 Then we define a new family of   $\mathbf K\times \mathbf K$-invariant functions by
 $$\tilde \psi_t(\cdot)= \psi_t(\cdot+a_t),$$
 By  (\ref{reduced-KRflow}),  $\tilde \psi_t$ satisfies
  \begin{align}\label{reduced-KRflow-3}
\frac{\partial \psi}{\partial t}(\cdot+a_t)=\log [ \pi (\nabla\tilde\psi){\rm Hess}(\nabla^2 \tilde\psi)]+(\tilde\psi+j).
   \end{align}
By the argument as for toric manifolds \cite[Proposition 4.1]{Zhu1}, we can also get $C^0$-estimate for   K\"ahler potentials as in  Case 1),
$$\varphi_{\tilde\psi_t}=\phi_{\tilde\psi_t}-\phi_{\psi_0}.$$
Moreover,   all  $C^{k,\alpha}$-norms of $\varphi_{\tilde\psi_t}$ are uniformly bounded.
Hence, in this case, the   $\mathbf K\times \mathbf K$-invariant metric
$$ \sqrt{-1}\partial\bar{\partial} (  \phi_{\psi_0}+\varphi_{\tilde\psi_t})$$
 will converge to a KR soliton on $(M,J)$ in sense of K\"ahler potentials  as well as   $\hat h_t$ in Lemma \ref{x-vector}  converges to a potential of HVF after a family of holomorphic transformations by $x\to a_t+x$ (also see the relation (26) in \cite{Zhu1}).

 Case 3). By  3) in Remark \ref{theorem2-remark} and  Remark \ref{horospherical} we need to consider Case 3.2).  Then  both of  (\ref{case-1-1}) and (\ref{case-2-1})
 hold  according to  the proof in Lemma  \ref{general-case}.

By (\ref{volume-N}), we see that 
  $(\mathbf N, \omega_{KS})$  does not lose the volume  of $(M, \omega(t))$. Then
  by HT  conjecture (cf. \cite{Bam, WfZ1}),
the metric completion $(M_\infty, \omega_\infty)$ of $(\mathbf N, \omega_{KS})$  is the limit of KR flow  (\ref{KRF}) in GH topology. Moreover,  $M_\infty $ is  homomorphic to a $\mathbb Q$-Fano variety  $\hat M_\infty$ which  admits a singular KR soliton  w.r.t. the HVF $Y$ \cite{WfZ1}.  It remains to show that $\hat M_\infty$ is biholomorphic to  $\overline{\mathbf N}$.

In case that $(M_\infty, \omega_\infty)$ is a KE metric, i.e., $\omega_{KS}=\omega_{KE}$ is KE.   Then $\hat M_\infty $ is reductive by \cite[Proposition A.1]{WfZ2}. By using the  technique of   partial $C^0$-estimate \cite{WfZ1},
 this exists a  special degeneration from $M$ to   $\hat M_\infty $  via Luna's lemma. Thus  the Mabuchi's K-energy on  $(M, J)$  is bounded below \cite{Li17} (also see \cite[Proposition 5.2]{WfZ2}). As a consequence,   $(M, J)$ is  K-semistable in sense of \cite{Ti1} (cf. \cite{Paul}). On the other hand,  $\overline{\mathbf N}$ is also K-polystable  as same as  $\hat M_\infty $ since  we have known that   it admits a  singular  KE metric $\omega_{KE}'$   by  Proposition \ref{bar-N-soluiion} together with  the fact $Y=0$ \cite{Berm}.  Note that $\overline{\mathbf N}$ is also a limit of  special degeneration of $M$ induced by $X_\infty\in \mathfrak a$. Therefore, by a result of Li-Xu-Wang \cite{LXW},
    $\hat M_\infty$  must be  biholomorphic to $\overline{\mathbf N}$.

    In a special case that  $\overline{\mathbf N}$ is smooth,  $\omega_{KE}'$ is  smooth  (cf. \cite[Theorem  B.1]{BBEGZ}).  Then  $(\overline{\mathbf N}, \omega_{KE}')$ is also  the smooth limit of KR flow (\ref{KRF}) by the argument in the proof of \cite[Theorem 1.4]{WfZ1}.  Thus  by the uniqueness of limits of  KR flow,  we can also prove that $\hat M_\infty$  is biholomorphic to $\overline{\mathbf N}$.

In case that $(M_\infty, \omega_\infty)$ is not  a KE metric.  Namely,  $Y\neq 0$.  Then
 by (\ref{Y-vectors})  we can modify  $Y$ to a rational vector in $\mathfrak a$  as $X_\infty$  so that  (\ref{lie-subalgebra}) or   (\ref{orthogonal}) is satisfied.   Thus,
the $\mathbb Q$-Fano variety $\overline{\mathbf  N}$ is also the limit of  $\mathbb C^*$-degeneration of $(M,J)$  induced  by the soliton HVF $Y$ according to the construction of   subgroup $\mathbf H$ of $\mathbf G\times \mathbf G$   in Example \ref{ex-degereration}.    By a result in \cite{CSW}, there is another  $\mathbb C^*$-degeneration relatively to $Y$ from  $\overline{\mathbf N}$ to $\hat M_\infty$.
Since  both of  $\overline{\mathbf N}$ and $\hat M_\infty$   admit  singular KR solitons  w.r.t.  $Y$,   this new $\mathbb C^*$-degeneration must be trivial by the relatively modified K-stability for KR solitons  \cite{BN, WZZ, HL2}.  Hence,  we prove that $M_\infty$ is biholomorphic to  $\overline{\mathbf N}$.  As a consequence,   $\omega_{KS}$ can be extended to a singular KR soliton w.r.t.  $Y$ on $\overline{\mathbf N}$ and $M_\infty$ can be realized by a  $\mathbb C^*$-degeneration of $(M,J)$  induced by $X_\infty$.   The proof of theorem  is complete.

\end{proof}

\section{ Case of  horosymmetric manifolds}

In this section, we  prove Theorem \ref{main-theorem-horosymmetric}.
 By  (\ref{reduced-KE-equ}), as in the case of Fano $\mathbf G$-manifold,  KR flow    (\ref{KRF})   with a  $\mathbf K$-invariant initial metric  on  a Fano   horosymmetric manifold  $(M,J)$ can be  reduced to  the following parabolic equation of MA type,
  \begin{align}\label{reduced-KE-equ-horosymmetric}
\frac{\partial\psi}{\partial t}=\log [ \pi_0 (\nabla\psi+2\rho_u){\rm Hess}(\nabla^2 \psi)]+ (\psi+j_0),  ~{\rm in}~\mathfrak a_+',
 \end{align}
 where  $\mathfrak a_+'$ is  a  cone  in  $\mathfrak a'$ defined in (\ref{cone-a'}), and
 $$j_0(x)=-\log J_0(x), ~{\rm in}~\mathfrak a_+'.$$

 As in Section 3, we let
 $$w_t'(x)=(\psi+j_0)(x).$$
 Then as in (\ref{xi-condition-2}), there is a family of $x_t$ such that
 \begin{align}\label{xi-condition-2-1}
  \inf_{\mathfrak{a}_+'} w_t'=  w_t'(x_t)=m_t,  ~|m_t|\le C.
\end{align}

Analogous to Theorem  \ref{main-theorem-2}, we can establish a convergence result of KR flow for horosymmetric manifolds.

\begin{theo}\label{main-theorem-horo-2}Let $(M, J)$ be a horosymmetric manifold. Let $x_t$ as chosen in (\ref{xi-condition-2-1}).   Then there are only three cases for KR flow (\ref{reduced-KRflow}) as follows.

Case 1).  There is a sequence of $t_i$ such that
 $$|x_{t_i}|\le C.$$
 Then (\ref{reduced-KRflow}) converges to a  KE metric in sense of K\"ahler potentials. As a consequence, $(M,J)$ is a
 KE manifold.

Case 2). $|x_t|\to  \infty$ as $t\to \infty$ and there is a sequence of $t_i$ such that
 $$|Pj(x_{t_i})|\le C.$$
 Then (\ref{reduced-KRflow}) converges to a  KR soliton in sense of K\"ahler potentials. As a consequence, $(M,J)$  admits a KR soliton.

Case 3).   $|Pj(x_t)|\to  \infty$ as $t\to \infty$. Then there are two subcases:

Case 3.1).
For any $\alpha\in \Phi_s^+$, it holds
\begin{align}\label{general-points-2}<W_\alpha, x_{t}>\to\infty, ~{\rm as}~t\to \infty.
\end{align}
Then (\ref{reduced-KRflow}) converges  locally smoothly to a KR soliton $\omega_{KS}$  on a horospherical space in Cheeger-Gromov topology,  whose completion is the  GH  limit of KR flow  (\ref{KRF}) with  a structure of  $\mathbb Q$-Fano horospherical   variety $ M_\infty$  as a limit of  $\mathbb C^*$-degeneration of $(M,J)$   induced by an element in the Lie algebra of Cartan torus of $M$.  Moreover,   $ M_\infty$ is  same with  a  limit of  $\mathbb C^*$-degeneration of $(M,J)$ induced by the soliton HVF of  $\omega_{KS}$, if  $\omega_{KS}$ is not a KE metric.

Case 3.2).   (\ref{general-points-2}) does not hold.  Then there is  a subset  $\Phi_0$ of $\Phi_s^+$
such that
\begin{align}\label{case-1-2}<\beta, x_{t}>\to \infty ~{\rm as} ~t\to\infty, ~\forall \beta\in   \Phi_0
\end{align}
and
\begin{align}\label{case-2-2}\delta_0\le<\alpha', x_{t}>\le A, ~\forall \alpha'\in  \Phi_s^+\setminus  \Phi_0.
\end{align}
Moreover,   (\ref{reduced-KRflow}) converges   locally smoothly to a KR soliton  $\omega_{KS}$  on a horosymmetric space in Cheeger-Gromov topology as in Case 3.1):   whose completion is the  GH  limit of KR flow  (\ref{KRF}) with  a structure of  $\mathbb Q$-Fano horosymmetric  variety  $ M_\infty$  as a limit of  $\mathbb C^*$-degeneration of $(M,J)$   induced by an element in the Lie algebra of Cartan torus of $M$.;    in  case that  $\omega_{KS}$ is not a KE metric,  $ M_\infty$ is  same with   a  limit of  $\mathbb C^*$-degeneration of $(M,J)$ induced by the soliton HVF of  $\omega_{KS}$.

\end{theo}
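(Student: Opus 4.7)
The plan is to follow verbatim the three-case scheme used in the proof of Theorem \ref{main-theorem-2}, with the data $(\Phi_+, J, \pi)$ of the reductive root system replaced throughout by the horosymmetric data $(\Phi_s^+, J_0, \pi_0)$ and with the unipotent shift $2\rho_u$ built into the reduced flow \eqref{reduced-KE-equ-horosymmetric}. First I would install the analogues of the fundamental estimates of Section 3: for the minimizer $x_t$ of $w_t' = \psi_t + j_0$ on $\mathfrak{a}_+'$, one reproves Proposition \ref{concentration}, namely $|m_t| \le C$ and $\mathrm{diam}(U_k') \le D_k$, together with the existence of a fixed ball $B_\kappa(x_t)$ bounded away from every wall $W_\alpha$, $\alpha \in \Phi_s^+$. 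The proofs translate without change, the key ingredient being the uniform lower bound \eqref{unipotent-part} on $\prod_{\beta \in \Phi_u}(\nabla\psi + 2\rho_u, \beta)$, which keeps the real Monge--Amp\`ere measure in \eqref{reduced-KE-equ-horosymmetric} comparable to its $\mathbf{G}$-manifold counterpart.

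For Cases 1) and 2) I would reproduce the K\"ahler-potential argument from Section 5. A $\mathbf{K}$-invariant background metric $\omega_0 = \sqrt{-1}\partial\bar\partial \phi_{\psi_0}$ together with a Harnack-type argument and Perelman's bound on the Ricci potential yields a uniform $C^0$-bound on $\varphi_{\psi_t} = \phi_{\psi_t} - \phi_{\psi_0}$ in Case 1), and on its translate by the central part $a_t$ of $x_t$ in Case 2). Standard KR-flow regularity upgrades this to all-order smooth convergence, producing respectively a KE metric and a KR soliton on $(M,J)$.

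The core work is Case 3), where I would follow the four-step scheme of Sections 4--5. First, Lemma \ref{gradient-vector} applied to $\omega_{\psi_t}$, together with Perelman's noncollapsing and scalar bound verifying \eqref{assumption-geom}, gives the universal upper Hessian bound $\mathrm{Hess}(\nabla^2 \psi_t) \le C$. Second, this combined with \eqref{reduced-KE-equ-horosymmetric} and the uniform bound on $(\alpha, \nabla\psi_t)$ produces lower Hessian bounds on each $U_k'$, hence local $C^{l,\alpha}$-bounds on $\hat\psi_t(x) = \psi_t(x + x_t) - \psi_t(x_t)$. Third, the analogues of Lemmas \ref{limit-mt} and \ref{limit-J} identify a shift vector $\mathbf{a}_0$ so that the translated sets $U_k' - x_t$ exhaust the enlarged cone cut out by the surviving walls $W_{\alpha'}$ with $\alpha' \in \Phi_s^+ \setminus \Phi_0$; passing to the limit in \eqref{reduced-KE-equ-horosymmetric} then yields a smooth $\phi$ satisfying a reduced KR soliton equation of type \eqref{reduced-KR-soliton} with drift $Y$ constructed as the $C^{l,\alpha}$-limit of $\hat h_t$ via the telescoping argument of Lemma \ref{x-vector}, which also forces $\langle \alpha', Y\rangle = 0$ for $\alpha' \in \Phi_s^+ \setminus \Phi_0$. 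Fourth, $\phi$ determines a smooth KR soliton $\omega_{KS}$ on a horosymmetric space $\mathbf{N}$ constructed from $\Phi_0$ via the recipe of Example \ref{ex-degereration}; the tail estimate on $\int_{\mathfrak{a}_+' \setminus U_k'} e^{-h_t + w_t'}\,dx$ combined with local smooth convergence yields both the volume identity $\mathrm{vol}(\mathbf{N}, \omega_{KS}) = \mathrm{vol}(M, \omega_0)$ and the polytope identification, so that $\omega_{KS}$ extends to a singular KR soliton on the $\mathbb{Q}$-Fano completion $\overline{\mathbf{N}}$. The identification of the GH limit $M_\infty$ with $\overline{\mathbf{N}}$ then follows: in the KE case from K-semistability of $M$ combined with K-polystability of $\overline{\mathbf{N}}$, and in the non-KE case from the uniqueness of modified K-stable degenerations for KR solitons, exactly as in the final portion of the proof of Theorem \ref{main-theorem-2}.

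The main obstacle is the third step above, namely correctly identifying the limiting cone and the shift $\mathbf{a}_0$. In the $\mathbf{G}$-manifold case the full Weyl group acts on $\psi_t$ and the standard Weyl-invariant geometry pins down the behavior at the walls; in the horosymmetric setting only the restricted Weyl group attached to $\Phi_s^+$ is available and the inhomogeneous drift $2\rho_u$ modifies the asymptotics of $j_0$. The analysis must therefore show that $\sinh\langle\alpha', x_t + \cdot\rangle \to \sinh\langle\alpha', \mathbf{a}_0 + \cdot\rangle$ uniformly on compact sets for every $\alpha' \in \Phi_s^+ \setminus \Phi_0$, and that the leading term $-\log J_0'(\cdot + \mathbf{a}_0)$ of $\hat w_t$ selects precisely the cone cut out by $\Phi_s^+ \setminus \Phi_0$. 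Once this geometric picture is in place, the construction of $\mathbf{N}$, the soliton drift $Y$, and the $\mathbb{C}^*$-degeneration induced by a rational approximation of $X_\infty = \lim x_t / |x_t|$ proceed as in Section 5 without essential modification.
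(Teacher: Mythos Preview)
Your proposal is correct and follows essentially the same approach as the paper's own proof, which explicitly states that it is ``almost same with one of Theorem \ref{main-theorem-2}'' and focuses on Case 3.2). Your four-step outline for Case 3) matches the paper's structure: local $C^{l,\alpha}$ bounds on $\hat\psi_t$, identification of the enlarged cone $\mathfrak a_+''$ cut by $\Phi_s^+\setminus\Phi_0$, passage to the limiting soliton equation with drift $Y$ orthogonal to $\Phi_s^+\setminus\Phi_0$, and the full-mass/polytope identification via Corollary \ref{extension-KR}.

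Two small points where the paper differs from your outline. First, the new horosymmetric space is $\mathbf N'=\mathbf G/\mathbf H'$ with $\mathbf H'$ built from the enlarged unipotent set $\Phi_u''=\Phi_u\cup\Phi_0$ via the explicit Lie-algebra presentation \eqref{lie-p-2}--\eqref{lie-h-2}, not literally by Example \ref{ex-degereration} (which is tailored to the $\mathbf G\times\mathbf G$ situation); the paper does, however, invoke Example \ref{ex-degereration} as the model, so your reference is in the right spirit. Second, in the final identification of $M_\infty$ with $\overline{\mathbf N'}$ in the KE subcase, the paper does not repeat the K-semistability/K-polystability/Li--Wang--Xu chain from Theorem \ref{main-theorem-2}; instead it appeals directly to Han--Li's uniqueness result for minimizers of the $H$-invariant. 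Your route via K-semistability of $M$ and K-polystability of $\overline{\mathbf N'}$ should also work, but the paper chose the shorter path here.
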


The proof  Theorem \ref{main-theorem-horo-2} is almost same with one of  Theorem  \ref{main-theorem-2} and  we need to consider Case 3.2) in  the  theorem.

We define a sequence of convex sets in $\mathfrak a_+'$ by
$$U_k=\{x\in \mathfrak a_+'|~ w_t(x) < m_t'+k+1\}$$
and a family  of convex functions  on $U_k$  by
$$\hat \psi_t' (x)=\psi_t' (x+x_t)-\psi_t' (x_t).$$
Then  analogous to Proposition \ref {local-uniform},
  $\hat\psi_t'$ is uniformly $C^{l,\alpha}$-bounded  on each $U_k$. Moreover,
the set $U_k-x_t$ converges to a cone $\mathfrak a_+''\supset \mathfrak a_+'$   in   $\mathfrak a'$  as $t\to\infty$  by
 $$ \mathfrak a_+''=\{x\in \mathfrak a|~<W_{\alpha'}. x>>0, ~\forall ~\alpha'\in \Phi_s^+\setminus\Phi_u''\},$$
where
\begin{align}\label{unipotential-2}
 \Phi_u''=\Phi_u \cup \Phi_0.
 \end{align}

 We introduce a function on   $ \mathfrak a_+''$ by
\begin{align}\label{j''-function}\mathbf J''(x)\,=\,\prod_{\alpha \in   \Phi_s^+\setminus \Phi_+''} \sinh\alpha(x).
\end{align}
and a vector
$$\rho_u'=\frac{1}{2}\sum_{\beta\in  \Phi_u''} \beta.$$
Then as in the proof of Proposition \ref{limit-solution} (also see Remark \ref{remark-limit}), there are an $\mathbf a_0\in  \mathfrak a_+'$,
  a constant $c$ and
  a  sequence of convex functions
  $$\psi_{t_i}(x+x_{t_i}+\mathbf a_0)- \psi_{t_i} (x_{t_i})-<2\rho_u',  x>+c$$
  which converges  to a solution $\phi$  of equation,
\begin{align}\label{reduced-KR-soliton-2}
 \pi_0(\nabla\phi+ 2\rho_u'){\rm Hess}(\nabla^2 \phi)= J''(x) e^{-\phi-Y(\phi)},
 \end{align}
where   $Y\in \mathfrak a_+''$ and  $\pi_0$ is the function defined by (\ref{pi'}).
Moreover,  $Y$ satisfies
\begin{align}\label{Y-vectors-2}
<\alpha', Y>=0, ~\forall~ \alpha' \in \Phi_s^+\setminus  \Phi_u''.
\end{align}

Analogous to  (\ref{limit-equation}),  the solution of (\ref{reduced-KR-soliton-2}) will define a KR soliton  $\omega_{KS}$ with  form  (\ref{horosymmetry-metric}) on a new horosymmetric space
$\mathbf N'=\mathbf G/\mathbf H'$  of $\mathbb Q$-Fano horosymmetric variety $\overline {\mathbf N'}$, which is   induced by a $\mathbb C^*$-degeneration via  an element $X_\infty$  in the Lie algebra of Cartan subgroup of $\mathbf G$
 as in  Example \ref{ex-degereration}.   In fact,   $X_\infty$ is a limit of $\frac{x_t}{|x_t|}$ in  (\ref{case-1-2}) as in
 Lemma \ref{general-case}.  Moreover,   $\omega_{KS}$ has   the full mass on  $\overline {\mathbf N'}$  as (\ref{volume-N}),  and the  moment polytope    associated to  $\frac{1}{l} c_1( \overline{\mathbf N'},  L^{-l}_{\overline{\mathbf N'}})$  is   \emph{fine}.  Thus   by Corollary \ref{extension-KR}   $\overline{\mathbf N'}$   admits a singular KR soliton w.r.t.  $Y$  analogous  to the   $\mathbb Q$-Fano horosymmetric variety $\overline{\mathbf N}$ in the proof of Theorem \ref{main-theorem-2}.

The Lie algebra of $\mathbf H'$ can be constructed as follows.
 By (\ref{Y-vectors-2}), we can construct a  Lie algebra of new  parabolic subgroup $\mathbf P'$ of $\mathbf G$  as in (\ref{lie-p}),
\begin{align}\label{lie-p-2}\mathfrak p'=\mathfrak t+\sum_{\alpha\in \Phi_{\mathbf L'}}X_\alpha + \sum_{\alpha'\in \Phi_{u}'} X_{-\alpha'},
\end{align}
where $ \Phi_{u}'=\Phi_u''$ and  $\Phi_{\mathbf L'}= \Phi_{\mathbf L}\setminus \Phi_u''\cup (-\Phi_u'').$
The first two parts determine  a Levi subgroup  ${\mathbf L'}^{\sigma}$ of $\mathbf P'$.
Then there is a fixed subgroup ${\mathbf L'}^{\sigma}$ of Levi group $\mathbf L'$ by the involution $\sigma$
 with its root system  is given by
  $$\Phi_{{\mathbf L'}^{\sigma}}=\{\alpha\in \Phi_{\mathbf L'}|~ \sigma(\alpha)=\alpha, ~{\rm or}~-\alpha\}.$$
Thus  the Lie algebra of $\mathbf H'$ can be represented  as,
\begin{align}\label{lie-h-2}\mathfrak h=\mathfrak t_0'  +   \sum_{\beta\in \Phi_{{\mathbf L'}^\sigma}} X_\beta + \sum_{\alpha'\in \Phi_{u}'} X_{-\alpha'},
\end{align}
where $\mathfrak t_0'$ is a subtorus of $\mathfrak t$ fixed by $\sigma$.

\begin{proof}[Completion of proof of Theorem \ref{main-theorem-horo-2}]By the argument above,  we have known that  the  metric of form (\ref{horosymmetry-metric})  determined by the solution $\phi$ of   (\ref{reduced-KR-soliton-2}) defines a  KR  soliton $\omega_{KS}$ on $\mathbf N'$, where the root subsystems $\Phi_{u}, \Phi_s^+ $ are replaced by  $\Phi_{u}', \Phi_s'{^+}=\Phi_s^+\setminus \Phi_u'$, respectively.
 Since  $(\mathbf N', \omega_{KS})$  does not lose the volume  of $(M, \omega(t))$,
  by HT  conjecture (cf. \cite{Bam, WfZ1}),
the metric completion  of $(\mathbf N', \omega_{KS})$  is the limit  $(M_\infty, \omega_{\infty})$ of KR flow  (\ref{KRF}) in GH topology.   Moreover   $M_\infty$ is  homomorphic to a $\mathbb Q$-Fano variety $\hat M_\infty$ and $\omega_{KS}$ can be extended  to a singular KR soliton w.r.t.  $Y$  on  $\hat M_\infty $ \cite{WfZ1}.

 We need to prove  that $M_\infty $ is   biholomorphic to  $\overline{\mathbf N}'$.   In fact,  as in the proof of Theorem \ref{main-theorem-2}, we can divide into two cases:  $\omega_{KS}$ is KE or not KE.   In the first case,  the conclusion   comes  from the  uniqueness result of Han-Li for the minimizers of $H$-invariant  \cite{HL1}.  In the second case,  $Y\neq 0$.   Then we can use  the relatively modified K-stability for KR solitons  w.r.t. $Y$  to conclude that   $M_\infty $ is   biholomorphic to  $\overline{\mathbf N}'$.  Moreover,
 $M_\infty $  is  a limit of  $\mathbb C^*$-degeneration  of $(M,J)$  induced  by the soliton HVF $Y$.  The  proof is finished.

\end{proof}

Theorem \ref{main-theorem-horosymmetric} follows from Theorem \ref{main-theorem-horo-2}. Theorem \ref{main-theorem-horo-2} also implies Theorem \ref{g-flow-horo}.

\begin{proof}[Proof of  Theorem \ref{g-flow-horo}]  Without loss of  generality \cite{ WfZ2, HL1},  we may assume that the initial metric   $\omega_0$ of  (\ref{KRF})  is $\mathbf K$-invariant.  By  Theorem \ref{main-theorem-horo-2} and the assumption in Theorem \ref{g-flow-horo},  it suffices  to consider  Case 3) in  Theorem  \ref{main-theorem-horo-2}.    Then the set  $\Phi_0$ in (\ref{case-1-2})  is  empty or not.   If $\Phi_0$ is  not empty, we can use the argument in  the proof of  \cite[Lemma  4.4]{LTZ1} (also see \cite[Lemma 6.4]{Zhu3})  for  $\mathbf G$-manifolds to show that the curvature along the flow must blow-up.   In fact,  the flow will blow-up  if there is a $\beta\in   \Phi_+$ and a sequence $\{x_{t_i}\}$ such that  (\ref{case-1-2}) holds.  Thus the theorem is true.

 On the other hand, if   $\Phi_0$  is  empty,   the  unipotent root  system $\Phi_{u}'$ and restricted  positive root   system $\Phi_s'{^+}$ of the  limit horosymmetric space $N'$ do not change. This means  that $\mathbf N'$ is same with the original one $\mathbf N$.  Thus $\mathbf N$ admits   a  KR  soliton $\omega_{KS}$ w.r.t.  the soliton HVF $Y$ in (\ref{Y-vectors-2})
 with full mess determined by the solution $\phi$ of   (\ref{reduced-KR-soliton-2}). As an application of  Corollary  \ref{extension-KR},  there exists a  singular KR soliton  w.r.t. $Y$  with its K\"ahler potential $\varphi$  in $\mathcal E^1(M,-K_{M})$  on the variety  compatification  $M$  of ${\mathbf N'}$ (or   ${\mathbf N}$).  By the uniqueness  \cite{BN, Bern,  BBEGZ,  HL2},     this  singular metric  should be  same with the GH limit $\omega_\infty$ of KR flow as a singular KR soliton on $M$ \cite{WfZ1}.  Moreover, we have
$$|\varphi|, ~|Y(\varphi)| \le C.$$
 Note that $M$ is the smooth.   Hence, by the regularity of $\omega_{KS}$ \cite[Lemma 7.2]{WfZ2},   $\omega_{KS}$ is in fact a global smooth KR soliton on $M$.  On the other words,   $(M,J)$ admits a  KR-soliton.  This is  a contradiction  with the assumption of theorem!   Therefore,   $\Phi_0$ could  not be empty and we prove the theorem.

  \end{proof}

\section{Appendix: Singular KR solitons on horosymmetric varieties}

In this appendix, we generalize the  existence results  of KR solitons on horosymmetric manifolds in \cite{Del2, DH} to $\mathbb Q$-Fano  horosymmetric varieties.

 Let   $M$ be a $\mathbb Q$-Fano variety  with  klt-singularities and   $X$  a HVF which can be lifted to one  in  $\mathbb CP^N$ as   $M\subset  \mathbb CP^N$.  We call  a current in $2\pi c_1(M)$ as a  singular KR soliton  w.r.t. $X$ on $M$, if  it  is a smooth metric on  ${\rm Reg} (M)$ which satisfies the KR soliton equation (\ref{KRF}) and  whose  weak K\"ahler potential $\varphi$  belongs to  the  space  of $\mathcal E^1(M,-K_M)$   introduced in \cite{BBEGZ}.  It has been shown in \cite[Theorem 4.2]{LTZ2} that  $\varphi\in \mathcal E^1(M,-K_M)$   in case of Fano $\mathbf G$-compactification varieties
 if and only if its Legendre function $u\in  E_{\mathbf K\times\mathbf K}^1(2P_+)$,  where
 \begin{align}\label{e-1-space}\mathcal E^1_{\mathbf K\times \mathbf K}(2P)\,=\,\{u| ~&\text{$u\ge 0$ is convex, Weyl-invariant on $2P$ which satisfies}
\notag\\
& u(O)=\inf_{2P_+}u=0~ (O\in 2P_+)~{\rm and}~\int_{2P_+} u \pi\,dy<+\infty\}.
\end{align}
 \cite[Theorem 4.2]{LTZ2} can be generalized to  $\mathbb Q$-Fano  horosymmetric varieties $\overline{\mathbf N}$ of homogeneous  space $\mathbf N$  here  while
 the integral  condition  in (\ref{e-1-space}) is replaced by
 $$\int_{2(P_++\rho_u)} u \pi_0(\cdot)dy<+\infty,$$
 where $\pi_0$ is defined by (\ref{pi'}) and  $P_+$ is  replaced by the quotient  space  of  moment polytope $P$ associated to $\frac{1}{l} c_1( \overline{\mathbf N},  L^{-l}_{\overline{\mathbf N}})$ by  the  restricted Weyl group.

 As in  case of  $\mathbf G$-manifolds \cite{LZZ, Del2},  we introduce a barycenter  associated  to HVF $X$ induced by an element in $\mathfrak t'$,
  \begin{align}\label{baracenter}{\rm bar}_X(2P_+)={\frac{\int_{2(P_++\rho_u)}y e^{\theta_X(y)}\pi_0(y)\,dy}{\int_{2(P_++\rho_u)} e^{\theta_X(y)}\pi_0(y) \,dy}},
\end{align}
where $\theta_X(y)$ is a  bounded potential of $X$ associated to an admissible K\"ahler metric induced by the Fubini-Study metric of     $\mathbb CP^N$ and $e^{\theta_X(y)}\pi_0\,dy$ is just the weighted MA measure $e^{\theta_X(y)}\omega_\varphi^n$  under the Legendre transformation.  Moreover, we may normalize $\theta_X(y)$ so that
$$\int_{2(P_++\rho_u)} e^{\theta_X(y)}\pi_0(y) \,dy=\int_{2(P_++\rho_u)} \pi_0(y) \,dy=V_0.$$

  The following is a version of  \cite[Theorem 1.2]{LTZ2}  for  the existence of singular KR solitons  on $\mathbb Q$-Fano  horosymmetric varieties.

\begin{theo}\label{LTZ-generalization}Let $\overline{\mathbf N}$ be a $\mathbb Q$-Fano  horosymmetric variety  with  the associated  \emph{fine}  moment   polytope $P$.   Let
$$\rho=\frac{1}{2}\sum_{\alpha'\in \Phi_s^+\cup\Phi_u} \alpha.$$
 Then $M$ admits a singular KR soliton w.r.t $X$ if
\begin{align}\label{bary}
{\rm bar}_X(2P_+)\,\in\, 2\rho+\Xi,
\end{align}
where $\Xi=\{y\in \mathfrak a^*|~ (\alpha, y)>0, ~\forall~\alpha\in\Phi_s^+ \}$  is the relative interior of the cone generated by $\Phi_s^+$.
\end{theo}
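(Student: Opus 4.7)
The plan is to adapt the variational approach of \cite{LTZ2} for Fano $\mathbf G$-compactifications to the horosymmetric setting. The starting point is Delcroix's reduction (\ref{horosymmetry-metric}), which identifies $\mathbf K$-invariant K\"ahler metrics on $\overline{\mathbf N}$ in $2\pi c_1$ with convex functions $\psi$ on $\mathfrak a'$, and dually with convex functions $u$ (via the Legendre transform) on the polytope $2(P_+ + \rho_u)$. First I would prove the exact analogue of \cite[Theorem 4.2]{LTZ2}: the class $\mathcal E^1(\overline{\mathbf N}, -K_{\overline{\mathbf N}})$ restricted to $\mathbf K$-invariant potentials corresponds to $\mathcal E^1_{\mathbf K}(2(P_+ + \rho_u))$ defined as in (\ref{e-1-space}) with $\pi$ replaced by $\pi_0$ and the full Weyl group replaced by the restricted Weyl group of $\Phi_s^+$. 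The singular KR soliton equation restricted to $\mathbf K$-invariant potentials is the Euler--Lagrange equation of the modified Ding functional
$$D_X(u) = \mathbf E_X(u) + \log \int_{2(P_+ + \rho_u)} e^{-u(y)}\, e^{\theta_X(y)}\, \pi_0(y)\, dy,$$
where $\mathbf E_X$ is the $X$-weighted Monge--Amp\`ere energy.

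The second and central step is to prove coercivity of $D_X$ on $\mathcal E^1_{\mathbf K}$ under (\ref{bary}). Along a normalized ray $u_s = s\, u_\infty$ with $u_\infty$ a nontrivial convex $1$-homogeneous test function on $2(P_+ + \rho_u)$, Laplace/Jensen asymptotics reduce $D_X(u_s)/s$ as $s \to \infty$ to a linear functional $\mathcal L_X(u_\infty)$ on such directions. A direct computation, combining the integration-by-parts formula for $\pi_0$ on the polytope with the boundary contribution of weight $\rho_L = \rho - \rho_u$ from each facet, shows that positivity of $\mathcal L_X$ on all nontrivial $u_\infty$ is exactly dual to the inclusion $\mathrm{bar}_X(2P_+) \in 2\rho + \Xi$. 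The fineness of $P$ is used here to guarantee that each vertex is a Delzant-type simple intersection of $r_0$ facets, so that the boundary integration by parts is well-posed and matches the expected combinatorics of $\Phi_s^+$.

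Third, once coercivity is established, the direct method of the calculus of variations produces a minimizer $\varphi_\star \in \mathcal E^1(\overline{\mathbf N}, -K_{\overline{\mathbf N}})$. By the Euler--Lagrange equation of $D_X$ this is a weak solution of the singular KR soliton equation with respect to $X$. Smoothness on $\mathrm{Reg}(\overline{\mathbf N})$ follows from the klt regularity theorem of \cite{BBEGZ} combined with Caffarelli-type interior regularity applied to the reduced real MA equation (\ref{reduced-KR-soliton}) on $\mathfrak a_+'$, once one has the a-priori control that $\varphi_\star \in \mathcal E^1$ provides on the corresponding convex $\psi_\star$.

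The main obstacle will be step two: identifying the extremal rays of the asymptotic cone of $\mathcal E^1_{\mathbf K}$ and matching them with the linear inequalities cutting out $2\rho + \Xi$. In the $\mathbf G$-compactification setting one benefits from full Weyl invariance of the polytope, but here only the restricted Weyl group $W_{\Phi_s^+}$ acts, and the unipotent contribution shifts the moment polytope by $2\rho_u$ while changing the weight from $\pi$ to $\pi_0$. One must verify that every extremal asymptotic direction is indeed detected by a test of the form $\mathcal L_X$, and that the resulting positivity constraints assemble exactly into the translate $2\rho + \Xi$, not merely into $2\rho_u + \Xi$ or some variant. This is essentially a bookkeeping exercise using the horosymmetric fibration $\overline{\mathbf N} \to \mathbf G/\mathbf P$ and the product decomposition of $\pi_0$ along $\Phi_s^+ \cup \Phi_u$, parallel to but more intricate than the computation in \cite{Del2, LTZ2}, and it is where the bulk of the proof lives.
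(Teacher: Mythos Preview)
Your proposal follows essentially the same route the paper sketches: adapt the variational method of \cite{LTZ2} for the modified Ding energy (following Berman--Berndtsson) to the horosymmetric setting, establish coercivity under the barycenter condition (\ref{bary}), and extract a minimizer in $\mathcal E^1(\overline{\mathbf N},-K_{\overline{\mathbf N}})$ which is then a singular KR soliton. The paper in fact leaves the detailed proof to the reader, so your outline already contains more structure than what is written there.

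One discrepancy worth flagging: you locate the role of the \emph{fine} hypothesis in the boundary integration-by-parts for the asymptotic slope functional $\mathcal L_X$. The paper instead says explicitly that fineness is assumed ``in order to verify that the Ricci potential of the Guillemin metric is bounded above,'' i.e., to guarantee a good reference metric for the variational setup, exactly as in \cite[Theorem 1.2]{LTZ2}. These are different technical roles; when you fill in the argument you should make sure the reference-metric step actually goes through, since that is where the paper places the hypothesis. The coercivity computation itself (your step two) may not require fineness per se---indeed the paper remarks that the assumption can likely be removed via the uniform modified $K$-stability approach of Han--Li \cite{HL2}.
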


 \cite[Theorem 1.2]{LTZ2}  was proved by the variational method  for the Ding-energy  as done for toric varieties by  Berman-Berndtsson  \cite{Berman-Berndtsson}.   At present, we can also use this method for the modified  Ding-energy  \cite{TZ1, BN, HL2}  to $\mathbb Q$-Fano  horosymmetric varieties.   Here we shall assume  that $P$ in Theorem \ref{LTZ-generalization}  is  \emph{fine} as in  \cite[Theorem 1.2]{LTZ2} for $\mathbf G$-compactification varieties  in order to verify that the Ricci potential of Gullimin metric is bounded above.  But we believe  this assumption can be removed as for  $\mathbf G$-compactification varieties  in \cite[Theorem 8.1]{LTZ2}   by using   a recent  result  of  Han-Li \cite{HL2} to verify the uniformly modified  K-stability via  (\ref{bary}).
  All of these  we will leave the detailed proofs to the reader.

The inverse of  Theorem  \ref{LTZ-generalization}  is also true for  singular  KE metrics on Fano $\mathbf G$-compactification varieties
\cite{Del2, LTZ2}. The result can be generalized to    KR solitons  even just defined on a   horosymmetric space with    full mass by the following lemma.

\begin{lem}\label{necessary-result}Let $\overline{\mathbf N}$ be a $\mathbb Q$-Fano  horosymmetric  variety.   Suppose that there is a KR soliton   $\omega_{KS}$     on  $\mathbf N$  w.r.t.  HVF $X$ induced by an element in  $\mathfrak t'$,  which is a  form  (\ref{horosymmetry-metric})  determined by  a solution $\psi$ of (\ref{reduced-KR-soliton}) with full mass on   $\overline{\mathbf N }$.  \footnote{It is equivalent to ${\rm Image}(\nabla \psi)=P$ by \cite[Lemma 4.5]{LTZ2} since $\psi$ is smooth on $\mathbf N$.}
Then  the associated  moment   polytope $P$ of  $\overline{\mathbf N}$   satisfies  the barycenter condition (\ref{bary})  associated  to  $X$.

\end{lem}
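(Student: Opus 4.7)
The plan is to derive the barycenter condition (\ref{bary}) from an integration by parts identity on the Weyl cone $\mathfrak{a}_+'$, after converting the integral defining the barycenter via the moment map.

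\textbf{Change of variables.} By the full mass hypothesis, the moment map $y = \nabla\psi + 2\rho_u$ is a diffeomorphism from $\mathfrak{a}_+'$ onto the interior of $2(P_+ + \rho_u)$. Since $X$ is induced by some $\xi \in \mathfrak{a}'$, its Hamiltonian potential takes the affine form $\theta_X(y) = \langle \xi, y \rangle + c_0$, and combining $dy = \mathrm{Hess}(\nabla^2\psi)\, dx$ with the soliton equation (\ref{reduced-KR-soliton}) gives
\[
e^{\theta_X(y)}\pi_0(y)\, dy \;=\; e^{\langle \xi, 2\rho_u\rangle + c_0}\, J_0(x)\, e^{-\psi(x)}\, dx.
\]
Hence
\[
\mathrm{bar}_X(2P_+) \;=\; \frac{\int_{\mathfrak{a}_+'} (\nabla\psi + 2\rho_u)\, J_0\, e^{-\psi}\, dx}{\int_{\mathfrak{a}_+'} J_0\, e^{-\psi}\, dx}.
\]

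\textbf{Integration by parts.} For any constant vector $v \in \mathfrak{a}'$, applying the divergence theorem to $v\cdot J_0 e^{-\psi}$ on $\mathfrak{a}_+'$ would give
\[
\int_{\mathfrak{a}_+'} \langle v, \nabla\psi\rangle\, J_0\, e^{-\psi}\, dx \;=\; \int_{\mathfrak{a}_+'} \langle v, \nabla\log J_0\rangle\, J_0\, e^{-\psi}\, dx.
\]
The flux on each wall $W_{\alpha'}$ vanishes because $J_0 = \prod_{\alpha''\in \Phi_s^+}\sinh\langle \alpha'', x\rangle$ vanishes linearly on $W_{\alpha'}$ while $\psi$ remains smooth. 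Using $\nabla\log J_0 = \sum_{\alpha''\in \Phi_s^+}\alpha''\coth\langle \alpha'', x\rangle$ and $2\rho - 2\rho_u = \sum_{\alpha''\in \Phi_s^+}\alpha''$, this yields for every $\alpha \in \mathfrak{a}'^*$
\[
\langle \alpha, \mathrm{bar}_X(2P_+) - 2\rho\rangle \int_{\mathfrak{a}_+'} J_0\, e^{-\psi}\, dx \;=\; \int_{\mathfrak{a}_+'} \sum_{\alpha''\in \Phi_s^+} \langle \alpha, \alpha''\rangle (\coth\langle \alpha'', x\rangle - 1)\, J_0\, e^{-\psi}\, dx.
\]

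\textbf{Positivity and conclusion.} For a simple root $\alpha$ of $\Phi_s^+$ one has $\langle \alpha, \alpha''\rangle \ge 0$ for every $\alpha''\in \Phi_s^+$, with strict inequality when $\alpha'' = \alpha$; combined with $\coth z - 1 > 0$ for $z > 0$, the right hand side is strictly positive. Since every positive root is a non-negative combination of simple ones, the cone $\Xi$ is equivalently defined by the simple-root inequalities alone, so strict positivity for every simple $\alpha$ gives $\mathrm{bar}_X(2P_+) - 2\rho \in \Xi$, which is (\ref{bary}).

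\textbf{Main obstacle.} The delicate point will be justifying that the divergence theorem leaves no boundary contribution at infinity. The walls are handled by the vanishing of $J_0$, but at infinity one must show $J_0 e^{-\psi}$ decays uniformly in every direction. Since $\nabla\psi$ takes values in the bounded polytope $2P_+$, $\psi$ grows asymptotically like the support function of $2P_+$, which must be compared with the exponential rate $\langle 2\rho_s, \cdot\rangle$ of $J_0$. I expect this to reduce to the full-mass condition that $2\rho_s$ lies in the relative interior of $2P_+$, via an exhaustion argument $\mathfrak{a}_+' \cap B_R$ with $R \to \infty$, paralleling the treatment of the $\mathbf G$-manifold case in \cite{LTZ2}.
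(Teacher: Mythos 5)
Your change of variables via the moment map and your integration\-/by\-/parts identity are essentially the computation the paper itself performs (the paper writes it as $\int_{\mathfrak a_+'}\partial_\xi w\, e^{-w}\,dx=0$ with $w=\psi'+j_0'$, which is the same statement), and your discussion of why the wall contributions and the contribution at infinity vanish is in fact more careful than what the paper offers. The gap is in the final positivity step. The claim that a simple root $\alpha$ of $\Phi_s^+$ satisfies $\langle\alpha,\alpha''\rangle\ge 0$ for every $\alpha''\in\Phi_s^+$ is false: two \emph{distinct} simple roots always pair non\-/positively, and strictly negatively whenever they are adjacent in the Dynkin diagram (already in $A_2$ one has $\langle\alpha_1,\alpha_2\rangle<0$ while $\alpha_2\in\Phi_s^+$). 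Hence the integrand $\sum_{\alpha''}\langle\alpha,\alpha''\rangle\bigl(\coth\langle\alpha'',x\rangle-1\bigr)$ contains genuinely negative terms, is not pointwise non\-/negative, and the strict positivity of the right\-/hand side of your identity does not follow from the argument given.

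The repair --- and what the paper's proof actually does, modulo its own sloppy quantifier ``for all $\xi\in\mathfrak t'$'' --- is to pair not with the roots themselves but with vectors $\xi$ in the closed restricted Weyl chamber $\{\xi:\ \langle\alpha'',\xi\rangle\ge 0\ \ \forall\,\alpha''\in\Phi_s^+\}$, i.e.\ the dual cone of the cone generated by $\Phi_s^+$. For such $\xi$ every coefficient $\langle\alpha'',\xi\rangle$ is non\-/negative by definition, each term $\langle\alpha'',\xi\rangle(\coth\langle\alpha'',x\rangle-1)$ is $\ge 0$ with at least one strictly positive, and one obtains $\langle{\rm bar}_X(2P_+)-2\rho,\xi\rangle>0$ for every nonzero $\xi$ in that dual cone. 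By biduality of convex cones this says precisely that ${\rm bar}_X(2P_+)-2\rho$ lies in the relative interior of the cone \emph{generated by} $\Phi_s^+$, which is the intended meaning of $\Xi$ (the displayed inequality description of $\Xi$ in the appendix actually cuts out the dual cone; the verbal description, and Delcroix's original barycenter theorem, refer to the cone generated by the roots). Your version, had the sign claim held, would have placed the barycenter in the dual cone --- a strictly stronger condition that the soliton equation does not deliver. So the last step should read ``test against the dual cone'' rather than ``test against the simple roots''; everything before that point is sound.
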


\begin{proof}The proof is almost same  with  one of  \cite[Proposition 5.3]{Del1} for KE-metrics on $\mathbf G$-manifolds.   We can  check  (\ref{bary}) directly as follows.  By  (\ref{reduced-KR-soliton}), we have
\begin{align}
 \pi_0(\nabla\psi+ 2\rho_u){\rm Hess}(\nabla^2 \psi)e^{\theta_X} =  e^{-(\psi+j_0)}, \notag
 \end{align}
 where $j_0=-\log J_0$.
 Let $\psi'= \psi-<2\rho_u,x>$  and $j_0'=j_0- <2\rho_u,x> $.  The the above equation becomes
 \begin{align}
 \pi_0(\nabla\psi'){\rm Hess}(\nabla^2 \psi')e^{\theta_X} =  e^{-(\psi'+j_0')}=e^{-w},\notag
 \end{align}
 where  $w=\psi' + j_0'$.
 Note that
   \begin{align}\label{volume-horo}  \int_{\mathfrak a_+'} \pi_0(\nabla\psi){\rm Hess}(\nabla^2 \psi)  dx=V_0
    \end{align}
Then  the  full mass condition of $\omega_{KS}$, we get
  \begin{align}\label{bartcentor-form}
 \int_{\mathfrak a_+'} \frac{\partial \psi'}{\partial \xi} e^{-w}dx=V_0< {\rm bar}_X(2P_+), \xi>.
  \end{align}
On the other hand, by  the fact that
$$ \frac{\partial j_0}{\partial \xi}< -2<\rho_0, \xi>, $$
where
 $$\rho_0=\frac{1}{2}\sum_{\alpha'\in \Phi_s^+} \alpha', $$
we have
\begin{align}\label{bartcentor-form-1}
\int_{\mathfrak a_+'} \frac{\partial j_0'}{\partial \xi} e^{-w}<-2V_0<\rho, \xi> .
 \end{align}
Thus combining  (\ref{bartcentor-form} ) and  (\ref{bartcentor-form-1}),  we  obtain
  \begin{align}\label{balance-condition}
 &V_0(< {\rm bar}(2P_+), \xi>-2<\rho, \xi>)\notag\\
& >\int_{\mathfrak a_+'} \frac{\partial \psi'}{\partial \xi} e^{-w}dx +\int_{\mathfrak a_+'} \frac{\partial j_0'}{\partial \xi} e^{-w}dx.
\end{align}
However,   by the  full mass condition of $\omega_{KS}$,
  it is easy to see that  for any $\xi\in  \mathfrak t'$,
 \begin{align}
 \int_{\mathfrak a_+'} \frac{\partial w}{\partial \xi} e^{-w} dx =- \int_{\mathfrak a_+'} \frac{\partial }{\partial \xi} e^{-w} dx=0.\notag
 \end{align}
 Namely,
 $$\int_{\mathfrak a_+'} \frac{\partial \psi'}{\partial \xi} e^{-w} dx+\int_{\mathfrak a_+'} \frac{\partial j_0'}{\partial \xi} e^{-w} dx  =0.
 $$
 Hence, we derive
\begin{align}\label{bartcentor-form-2}
 < ({\rm bar}(2P_+)- 2\rho),  \xi> > 0,~\forall ~\xi\in  \mathfrak t'.
 \end{align}
 Clearly, (\ref{bartcentor-form-2})  is equivalent to (\ref{bary}).  The lemma is proved.

\end{proof}

By Theorem \ref{LTZ-generalization} and  Lemma \ref {necessary-result},
 we  prove

\begin{cor}\label{extension-KR}  Let   $\overline{\mathbf N}$  be a $\mathbb Q$-Fano  horosymmetric  variety  with  the associated  \emph{fine} moment   polytope.      Suppose that  there is a KR soliton  $\omega_{KS}$  on   $\mathbf N$  w.r.t.  HVF $X$ induced by an element in  $\mathfrak t'$, which  is a  form  (\ref{horosymmetry-metric} determined by a solution $\psi$ of (\ref{reduced-KR-soliton})   with full mass  on  $\overline{\mathbf N}$.  Then  there exists a  singular KR soliton  w.r.t $X$  on  $\overline{\mathbf N}$  with its K\"ahler potential in $\mathcal E^1(\overline{\mathbf N},-K_{\overline{\mathbf N}})$.

\end{cor}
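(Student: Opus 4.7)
\medskip

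\noindent\textbf{Proof proposal.}
My plan is to combine the two preceding results of the Appendix in a bridge argument: Lemma~\ref{necessary-result} gives the necessary barycenter inequality from the existence of a soliton on the open orbit, and Theorem~\ref{LTZ-generalization} converts that barycenter inequality back into the existence of a singular KR soliton on the full compactification $\overline{\mathbf N}$. The corollary is thus purely a linking statement between the ``necessary'' and ``sufficient'' criteria, with the fine polytope hypothesis ensuring that the sufficient direction is applicable.

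\medskip

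First I would apply Lemma~\ref{necessary-result} to the KR soliton $\omega_{KS}$ on $\mathbf N$. The hypothesis is that $\omega_{KS}$ is of the form~(\ref{horosymmetry-metric}) determined by a solution $\psi$ of the reduced equation~(\ref{reduced-KR-soliton}) with full mass on $\overline{\mathbf N}$, which is exactly the setup of the lemma. Consequently, the moment polytope $P$ of $\overline{\mathbf N}$ satisfies the barycenter condition
\[
{\rm bar}_X(2P_+) \in 2\rho + \Xi
\]
associated to the HVF $X$. This is the first of the two ingredients.

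\medskip

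Next I would invoke Theorem~\ref{LTZ-generalization}. The remaining hypotheses to verify are that $\overline{\mathbf N}$ is a $\mathbb Q$-Fano horosymmetric variety (assumed) and that its moment polytope is \emph{fine} (assumed). Combined with the barycenter inequality obtained in the previous step, Theorem~\ref{LTZ-generalization} produces a current in $2\pi c_1(\overline{\mathbf N})$ which is a smooth KR soliton w.r.t.~$X$ on $\mathrm{Reg}(\overline{\mathbf N})$ and whose weak K\"ahler potential lies in $\mathcal E^1(\overline{\mathbf N},-K_{\overline{\mathbf N}})$. This is precisely the conclusion of the corollary.

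\medskip

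I expect no serious obstacle beyond checking compatibility of conventions: the HVF $X$ in the hypothesis is induced by an element of $\mathfrak t'$, which is the same class of HVFs considered both in the reduced equation~(\ref{reduced-KR-soliton}) and in the barycenter formula~(\ref{baracenter}); the normalization of the soliton potential $\theta_X$ enters Lemma~\ref{necessary-result} and Theorem~\ref{LTZ-generalization} in compatible ways, so the two steps glue directly. The only mildly subtle point is that Lemma~\ref{necessary-result} only requires that the soliton exists on the open orbit $\mathbf N$ with full mass, so it is genuinely applicable here even though we do not yet know that $\omega_{KS}$ extends to $\overline{\mathbf N}$ as a current in $\mathcal E^1$; the full-mass hypothesis is what feeds the boundary integration identity used in the proof of Lemma~\ref{necessary-result}, and the fineness of the polytope is used in Theorem~\ref{LTZ-generalization} to guarantee boundedness of the Ricci potential of the Guillemin reference metric.
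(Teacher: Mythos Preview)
Your proposal is correct and matches the paper's approach exactly: the paper simply states ``By Theorem~\ref{LTZ-generalization} and Lemma~\ref{necessary-result}, we prove'' and then records the corollary, so the bridge argument you describe---use Lemma~\ref{necessary-result} to extract the barycenter condition~(\ref{bary}) from the full-mass soliton on $\mathbf N$, then feed it into Theorem~\ref{LTZ-generalization} under the fine-polytope hypothesis---is precisely the intended proof.
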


\vskip20mm

\end{document}